\newcommand{\LC}{\left(}
\newcommand{\RC}{\right)}
\theoremstyle{plain}
\newtheorem{thm}{Theorem}[section]
\newtheorem{prop}{Proposition}[section]
\newtheorem{lem}[prop]{Lemma}
\newtheorem{cor}[prop]{Corollary}
\newtheorem{defi}[prop]{Definition}
\newtheorem{rmk}[prop]{Remark}
\numberwithin{equation}{section}
\newcommand {\R} {\mathbb{R}} 
 \newcommand {\N} {\mathbb{N}}
\newcommand {\p} {\partial}
\newcommand {\supp} {\text{supp}}
\newcommand{\wt}{\widetilde}
\newcommand{\abs}[1]{\lvert #1 \rvert}          % Formatting for the absolute value
\newcommand{\norm}[1]{\lVert #1 \rVert}         % Formatting for the norm
\title[The Calder\'on problem for nonlocal parabolic operators]{The Calder\'on problem for nonlocal parabolic operators}
\author[C.-L. Lin]{Ching-Lung Lin}
\address{Department of Mathematics, National Cheng- Kung University, Tainan 701, Taiwan.}
\email{cllin2@mail.ncku.edu.tw}
\author[Y.-H. Lin]{Yi-Hsuan Lin}
\address{Department of Applied Mathematics, National Yang Ming Chiao Tung University, Hsinchu 30050, Taiwan}
\email{yihsuanlin3@gmail.com}
\author[G. Uhlmann]{Gunther Uhlmann}
\address{Department of Mathematics, University of Washington
	and Institute for Advanced Study, the Hong Kong University of Science
	and Technology}
\email{gunther@math.washington.edu}
\begin{document}
	
	\maketitle
	
	\begin{abstract}
	We investigate inverse problems in the determination of leading coefficients for nonlocal parabolic operators, by knowing the corresponding Cauchy data in the exterior space-time domain. The key contribution is that we reduce nonlocal parabolic inverse problems to the corresponding local inverse problems with the lateral boundary Cauchy data. In addition, we derive a new equation and offer a novel proof of the unique continuation property for this new equation. We also build both uniqueness and non-uniqueness results for both nonlocal isotropic and anisotropic parabolic Calder\'on problems, respectively.

		\medskip
		
		\noindent{\bf Keywords.} Calder\'on problem, Cauchy data, nonlocal parabolic operators, unique continuation property, global uniqueness, non-uniqueness.
		
		\noindent{\bf Mathematics Subject Classification (2020)}: 35B35, 35R11, 35R30
		
	\end{abstract}

	\tableofcontents

	\section{Introduction}\label{Sec 1}
	
 In this work, we study a nonlocal analogue of the Calder\'on problem for nonlocal parabolic operators. The mathematical formulation in this work is given as follows: Let $\Omega \subset \R^n$ be a bounded domain with Lipschitz boundary $\p \Omega$ for $n\geq 2$, and $T>0$ be a real number. Consider the parabolic equation 
 \begin{align}\label{local para}
 	\begin{cases}
 		  \mathcal{H}v=0 &\text{ in }\Omega_T:=(-T,T) \times \Omega ,\\
 		v(t,x)=f(t,x) &\text{ on }\Sigma_T:=(-T,T) \times \Sigma  , \\
 		v(-T,x)=0 &\text{ for }x\in \Omega,
 	\end{cases}
 \end{align}
where 
\begin{align}\label{local para op.}
	\mathcal{H}:=\p_t -\nabla \cdot (\sigma \nabla )
\end{align}
denotes the parabolic operators and $\Sigma:=\p \Omega$.
Consider the coefficient $\sigma(x)=\LC \sigma_{ik}(x)\RC_{1\leq i, k \leq n}$ to be a positive definite Lipschitz continuous matrix-valued function satisfying 
\begin{align}\label{ellipticity condition}
\begin{cases}
	\sigma_{ik}=\sigma_{ki}, \text{ for all }i,j=1,2,\ldots, n, \\
		c_0 |\xi|^2 \leq \displaystyle\sum_{i,k=1}^n \sigma_{ik}(x)\xi_i \xi_k \leq c_0^{-1}|\xi|^2 , \text{ for any } x \text{ and }  \xi=(\xi_1,\ldots, \xi_n)\in \R^n,\\
		\abs{\sigma(x)-\sigma(z)}\leq C_0|x-z|, \text{ for }x,z\in \R^n,
\end{cases}
\end{align} 
where $c_0\in (0,1)$ and $C_0>0$ are constants. Meanwhile, we also adapt the notation 
$$
B_T:=(-T,T) \times  B,
$$ 
for any $B\subset \R^n$.

It is known the well-posedness of \eqref{local para} always holds whenever $f$ satisfies suitable regularity assumptions (see Section \ref{Sec 2}). 
Once the well-posedness holds for certain equations, we can study inverse problems via either the \emph{Cauchy data} or the \emph{Dirichlet-to-Neumann} (DN) map.
In this work, we utilize the lateral boundary Cauchy data as our measurements, which is given by 
$$
\mathcal{C}_{\Sigma_T}\subset L^2(0,T;H^{1/2}(\Sigma_T))\times L^2(0,T;H^{-1/2}(\Sigma_T))
$$ with 
\begin{align}\label{local Cauchy}
	\mathcal{C}_{\Sigma_T}:=\left\{ \left. v_f \right|_{\Sigma_T}, \,  \left. \sum_{i,k=1}^n \sigma_{ik}\p_{x_k}v_f \nu_i \right|_{\Sigma_T} \right\},
\end{align}
where $v_f$ is a solution of \eqref{local para}, and $\nu =(\nu_1,\ldots, \nu_n)$ is the unit outer normal on $\Sigma$.
The classical Calder\'on problem for the space-time parabolic equation \eqref{local para} is to determine $\sigma$ by using the information $\Lambda_{\sigma}$ on $\Sigma_T$.

As a matter of fact, we are interested in the Calder\'on problem for nonlocal parabolic equations, which can be formulated as an initial exterior value problem. Throughout this work, we restrict the function $\sigma=\LC \sigma_{ik} \RC_{1\leq i,k\leq n}$ to be the $n\times n$ identity matrix $\mathbf{I}_n$ outside $\overline{\Omega}$, so that $\sigma$ still satisfies the condition \eqref{ellipticity condition} in $\R^n$. Given $s\in (0,1)$, consider 
 \begin{align}\label{nonlocal para}
	\begin{cases}
		\mathcal{H}^su=0 &\text{ in }\Omega_T\\
		u(t,x)=f(t,x) &\text{ in }(\Omega_e)_T, \\
		u(t,x)=0 &\text{ for }t\leq -T \  \text{ and }\  x\in \R^n,
	\end{cases}
\end{align}
where $\mathcal{H}$ is the parabolic operator given by \eqref{local para op.}, and 
\[
\Omega_e:=\R^n \setminus \overline{\Omega}
\]
stands for the exterior domain. Due to the definition $\mathcal{H}^s$ (see the rigorous definition of $\mathcal{H}^s$ in Section \ref{Sec 2}), we cannot only pose the initial condition for \eqref{nonlocal para}, but we require the whole past time information in order to make the equation \eqref{nonlocal para} well-defined.
In short, with suitable regularity assumptions for exterior data $f$, the well-posedness of \eqref{nonlocal para} holds (see Section \ref{Sec 2}).

Furthermore, we can formulate the Calder\'on problem for nonlocal parabolic equations as follows. Let $W\subset \Omega_e$ be an arbitrarily nonempty open set, and we define the corresponding exterior partial Cauchy data given by 
$$
\mathcal{C}_{W_T}\subset \LC \widetilde{\mathbf{H}}^s((\Omega_e)_T)\RC\times \LC\widetilde{\mathbf{H}}^s(W_T)\RC^\ast
$$ with 
\begin{align}\label{nonlocal Cauchy}
	\mathcal{C}_{W_T}:=\left\{ u|_{(\Omega_e)_T}, \, \left. \mathcal{H}^s u \right|_{W_T} \right\} ,
\end{align}
where $\widetilde{\mathbf{H}}^s((\Omega_e)_T)$ is a suitable function space which will be introduced in Section \ref{Sec 2}, and $\LC \widetilde{\mathbf{H}}^s(W_T)\RC^\ast$ denotes the dual space of $\mathbf{H}^s(W_T)$.
Our inverse problem is to ask whether can we determine $\sigma$ by using the corresponding exterior partial Cauchy data or not. In particular, we propose the following two inverse problems in space-time domain for both local and nonlocal parabolic equations:

\begin{itemize}
	\item[\textbf{(1)}] \textbf{Local Calder\'on's problem.}  Can one determine the coefficient $\sigma$ from the local Cauchy data \eqref{local Cauchy} of \eqref{local para}?

		\item[\textbf{(2)}] \textbf{Nonlocal Calder\'on's problem.}  Can one determine the coefficient $\sigma$ from the nonlocal Cauchy data \eqref{nonlocal Cauchy} of \eqref{nonlocal para}?
\end{itemize}
In this work, we will answer the above two questions, and describe the relations between nonlocal and local Calder\'on's problems for both nonlocal and local parabolic equations. We want to show that the above nonlocal Calder\'on problem \textbf{(2)} can be reduced to the local Calder\'on problem \textbf{(1)}, and new unique continuation/determination results are established in this work.

\medskip

\noindent $\bullet$ \textbf{Literature review.} The fractional Calder\'on problem was first proposed and solved in the work \cite{ghosh2016calder}, where the authors determined the zero order potential for the fractional Schr\"odinger equation by using the exterior partial Cauchy data. The main tools in the study of fractional inverse problems are based on the \emph{global unique continuation property} and the \emph{Runge approximation property}. Using these methods, many researchers have investigated inverse problems for fractional equations under various settings of mathematical models, such as \cite{BGU18,BKS2022calderon,covi2022global,CLL2017simultaneously,CL2019determining,cekic2020calderon,CMRU2022higher,feizmohammadi2021fractional,GLX,GRSU18,KLW2022calderon,kowinverse,ghosh2021non,harrach2017nonlocal-monotonicity,harrach2020monotonicity,LL2020inverse,LL2022inverse,lai2019global,RS20Calderon,LLR2019calder,lin2020monotonicity,QU2022calder,railo2022fractional,railo2022low,ruland2018exponential} and some references therein. In addition, several interesting properties for nonlocal parabolic operators have been studied in \cite{arya2022space,banerjee2018monotonicity,banerjee2021harnack,banerjee2022space}.

Meanwhile, the Calder\'on problem to determine the lower order coefficient for a fractional space-time parabolic equation has been considered by \cite{LLR2019calder} for constant coefficients and \cite{BKS2022calderon} for variable coefficients. More precisely, given $0<s<1$, consider the following fractional parabolic equation 
\begin{align*}
	\begin{cases}
		\mathcal{H}^s u +q u =0 &\text{ in }\Omega_T, \\
		u=f&\text{ in }(\Omega_e)_T,\\
		u(t,x)=0 &\text{ for }t\leq -T \ \text{ and }\ x\in \R^n,
	\end{cases}
\end{align*}
where $q=q(t,x)\in L^\infty(\Omega_T)$. It has been shown that one can determine zero order potential $q$ by using the exterior DN map.

 Before stating our main results, let us characterize our mathematical setups in the following.

\begin{itemize}
	\item[\textbf{(S)}] For $n\geq 2$, Let $\Omega \subset \R^n$ be a bounded open set with Lipschitz boundary $\p \Omega$, and $T>0$ be a number. Let $\sigma^{(j)}=\LC \sigma^{(j)}_{ik}\RC_{1\leq i,k\leq n}$ satisfy \eqref{ellipticity condition} in $\overline{\Omega}$, and further assume that $\sigma_{ik}^{(j)}(x)=\delta_{ik}$ to be the Kronecker delta, for $x\in \Omega_e$ and $j=1,2$. Consider $\mathcal{H}_j$ to be of the form \eqref{local para op.}, for $j=1,2$.  For $0<s<1$, let $W\subset \Omega_e$ be arbitrarily nonempty open subsets, and define the exterior partial Cauchy data by 
	$$
	\mathcal{C}^{(j)}_{W_T}=\left\{ u_j|_{(\Omega_e)_T}, \, \left. \LC \mathcal{H}_j\RC ^s u_j \right|_{W_T} \right\},
	$$
	where $u_j\in \mathbf{H}^{s}(\R^{n+1})$ is the solution of 
	\begin{align*}
		\begin{cases}
			\LC \mathcal{H}_j \RC^su_j=0 &\text{ in }\Omega_T\\
			u_j=f &\text{ in }(\Omega_e)_T, \\
			u_j(t,x)=0 &\text{ for }t\leq -T \  \text{ and }\  x\in \R^n,
		\end{cases}
	\end{align*}
	for $j=1,2$. Throughout this paper, we always assume the exterior Dirichlet data $f\in C^\infty_c\LC \LC \Omega_e\RC_T \RC$ for the sake of convenience. Moreover, we define the local Cauchy data as usual to be
	$$
	\mathcal{C}^{(j)}_{\Sigma_T}:=\left\{ \left. v_j \right|_{\Sigma_T}, \, \left. \sigma_j \p_{\nu} v_j \right|_{\Sigma_T} \right\},
	$$ 
	where $v_j$ is a solution of 
	 \begin{align*}
		\begin{cases}
			\mathcal{H}_jv_j=0 &\text{ in }\Omega_T,\\
			v_j=f &\text{ on }\Sigma_T, \\
			v_j(-T,x)=0 &\text{ for }x\in \Omega,
		\end{cases}
	\end{align*}
and we use the following notation 
\begin{align}\label{conormal}
	\sigma_j\left. \p_{\nu} v_j \right|_{\Sigma_T} :=\left. \sum_{i,k=1}^n \sigma^{(j)}_{ik}\p_{x_k}v_j \nu_i \right|_{\Sigma_T}
\end{align}
to denote the Neumann data, for $j=1,2$. Here $\nu=\LC \nu_1,\ldots, \nu_n \RC$ denotes the unit outer normal on $\Sigma$.
\end{itemize}

Then we are ready to state the first main theorem.
\begin{thm}\label{T1}
Adopting all statements and notations given in $\mathbf{(S)}$, suppose that the exterior partial Cauchy data 
\begin{align}\label{same nonlocal Cauchy data}
	\mathcal{C}^{(1)}_{W_T}=\mathcal{C}^{(2)}_{W_T},
\end{align}
 then the lateral boundary Cauchy data are the same that 
  $$
 \mathcal{C}^{(1)}_{\Sigma_T}=\mathcal{C}^{(2)}_{\Sigma_T}.
 $$
\end{thm}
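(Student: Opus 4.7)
The plan is to use a unique continuation property (UCP) for the nonlocal parabolic operator, together with a Runge-type approximation, to propagate the equality of the exterior Cauchy data on $W_T$ into equality of the local lateral Cauchy data on $\Sigma_T$.

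Fix an arbitrary $f \in C^\infty_c((\Omega_e)_T)$ and let $u_j$ be the nonlocal solution of $(\mathcal{H}_j)^s u_j = 0$ for $j=1,2$. Because $u_1$ and $u_2$ share the same exterior data $f$ and vanishing past, the hypothesis $\mathcal{C}^{(1)}_{W_T}=\mathcal{C}^{(2)}_{W_T}$ reduces to the identity $(\mathcal{H}_1)^s u_1 = (\mathcal{H}_2)^s u_2$ on $W_T$. Setting $w := u_1 - u_2$, which vanishes on $(\Omega_e)_T$ and for $t \leq -T$, this rewrites on $W_T$ as
\begin{equation*}
(\mathcal{H}_1)^s w = \LC (\mathcal{H}_2)^s - (\mathcal{H}_1)^s \RC u_2,
\end{equation*}
where the right-hand side is a source term encoding the nonlocal mismatch of the two fractional operators acting on the known function $u_2$. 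A subtle point is that, although $\sigma_1=\sigma_2=\mathbf{I}_n$ on $\Omega_e$ (so the local operators $\mathcal{H}_j$ agree near $W$), the fractional powers $(\mathcal{H}_j)^s$ still ``see'' the interior coefficients, so the source generically does not vanish.

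The decisive and most delicate step is to apply the UCP for the nonlocal parabolic operator---the ``new equation'' and its novel proof advertised in the abstract---to conclude $w \equiv 0$ on all of $\R^{n+1}$, and hence $u_1 \equiv u_2 =: U$ globally. I expect this UCP to be established via a Caffarelli-Silvestre/Stinga-Torrea type extension of $w$ to a degenerate local parabolic problem in one higher dimension, followed by a Carleman-type or frequency-function argument propagating vanishing from the open set $W_T$ throughout $\R^{n+1}$.

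Once $u_1 \equiv u_2 = U$ globally, the interior Dirichlet and conormal traces of $U$ on $\Sigma_T$ coincide and are unambiguous, since $\sigma_1=\sigma_2=\mathbf{I}_n$ on a neighbourhood of $\Sigma$. To transfer this into equality of the local Cauchy data sets $\mathcal{C}^{(j)}_{\Sigma_T}$, I would vary $f$ over $C^\infty_c((\Omega_e)_T)$ and invoke a Runge-type density result---derivable via Hahn-Banach duality from the same UCP---to show that $\{U|_{\Omega_T} : f \in C^\infty_c((\Omega_e)_T)\}$ is rich enough to determine the Dirichlet-to-Neumann map of the local parabolic problem. Standard integration-by-parts identities pairing $U$ against solutions of the adjoint local problem then translate this determination into the desired equality $\mathcal{C}^{(1)}_{\Sigma_T} = \mathcal{C}^{(2)}_{\Sigma_T}$.
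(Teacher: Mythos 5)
There is a genuine gap, and it sits exactly at the step you call ``decisive.'' Setting $w:=u_1-u_2$, you correctly observe that $w=0$ in $(\Omega_e)_T$ and that $(\mathcal{H}_1)^s w=\bigl((\mathcal{H}_2)^s-(\mathcal{H}_1)^s\bigr)u_2$ in $W_T$, but this right-hand side does \emph{not} vanish, so you do not have $w=(\mathcal{H}_1)^s w=0$ in any open set and the global UCP for $(\mathcal{H}_1)^s$ simply does not apply. Worse, the conclusion you want --- $u_1\equiv u_2$ globally for every exterior datum $f$ --- is generically false when $\sigma_1\ne\sigma_2$: the fractional operators $(\mathcal{H}_1)^s$ and $(\mathcal{H}_2)^s$ differ in $\Omega_T$, so the nonlocal solutions $u_1$ and $u_2$ differ inside $\Omega_T$ even though their exterior Cauchy data coincide. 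If $u_1\equiv u_2$ held for all $f$, one could subtract the two equations to get $\bigl[(\mathcal{H}_1)^s-(\mathcal{H}_2)^s\bigr]u=0$ in $\Omega_T$ for a dense family of $u$, which would essentially force $\sigma_1=\sigma_2$ outright --- the theorem is precisely about what one can conclude without that. A secondary gap is the last paragraph: even granting $u_1\equiv u_2=:U$, $U$ solves a \emph{nonlocal} equation in $\Omega_T$, not the local parabolic equation $\mathcal{H}_j v=0$, so its lateral Dirichlet/conormal traces do not directly constitute the local Cauchy pair $\mathcal{C}^{(j)}_{\Sigma_T}$; you have not explained how a nonlocal solution gets converted into a local one.

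The paper's route is structurally different. It never tries to show $u_1=u_2$. It introduces the evolutive semigroup function $\mathbf{U}_j(t,\tau,x)=\mathcal{P}^{\mathcal{H}_j}_\tau u_j(t,x)$, which satisfies the \emph{new equation} $(\p_t+\p_\tau)\mathbf{U}_j+\mathcal{L}_j\mathbf{U}_j=0$ in one extra variable $\tau$; a tailor-made Carleman estimate and UCP (Section~\ref{Sec: UCP}) for this degenerate-looking but genuinely local equation, combined with the exterior propagation argument of Proposition~\ref{Prop:exterior_UCP}, yields $\mathbf{U}_1=\mathbf{U}_2$ in $(-T,T)\times(0,\infty)\times\Omega_e$ --- note the conclusion is restricted to $\Omega_e$, not all of $\R^n$. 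Then the objects $\mathbf{V}_j=\int_0^\infty\mathbf{U}_j\,d\tau$ and $\mathbf{W}_j=(\mathcal{H}_j)^s\mathbf{V}_j$ are defined: $\mathbf{V}_j$ solves $\mathcal{H}_j\mathbf{V}_j=u_j$, hence $\mathcal{H}_j\mathbf{W}_j=(\mathcal{H}_j)^s u_j=0$ in $\Omega_T$, so $\mathbf{W}_j$ is a \emph{local} parabolic solution. One then shows $\mathbf{W}_1=\mathbf{W}_2$ in $(\Omega_e)_T$, and a Hahn--Banach/Runge density lemma (Lemma~\ref{Lem: Density}) shows that as $f$ varies, the $\mathbf{W}_j|_{\Omega_T}$ are dense among all local parabolic solutions, from which $\mathcal{C}^{(1)}_{\Sigma_T}=\mathcal{C}^{(2)}_{\Sigma_T}$ follows. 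The Runge and UCP ingredients you anticipated do appear, but applied to the auxiliary $(\tau,t,x)$ equation and the derived functions $\mathbf{V}_j,\mathbf{W}_j$, not to $u_1-u_2$ directly.
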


\begin{comment}
\begin{rmk}\label{rmk: exterior sets}
	Since $W \subset \Omega_e$ is an arbitrarily nonempty open set, the condition \eqref{same nonlocal Cauchy data} states that 
	\begin{align}\label{eq:overdet thm}
		u_1=u_2 =f\in C^\infty_c\LC\LC \Omega_e\RC _T\RC  \quad  \text{ and }  \quad  \LC \mathcal{H}_1\RC^s u_1 =\LC \mathcal{H}_2\RC^s u_2  \text{ in }  W_T.
	\end{align} 
 As a matter of fact, we may always find an open set $\mathcal{O}\subset \Omega_e$ such that $\overline{\mathcal{O}}\cap \overline{W}=\emptyset$. Moreover, with the conditions \eqref{eq:overdet thm} at hand, without loss of generality, we may measure the Cauchy data
 \begin{align}\label{eq:overdet thm1}
 	u_1=u_2 =f\in C^\infty_c\LC\LC \mathcal{O}_1\RC _T\RC  \quad  \text{ and }  \quad  \LC \mathcal{H}_1\RC^s u_1 =\LC \mathcal{H}_2\RC^s u_2  \text{ in } \LC \mathcal{O}_2\RC_T
 \end{align} 
on disjoint open sets in the exterior domain.
\end{rmk}

\end{comment}

Via the result of Theorem \ref{T1}, we are able to reduce the Calder\'on problem for nonlocal parabolic equations to the Calder\'on problem for local parabolic equations. Based on Theorem \ref{T1}, one can immediately obtain the following result.

\begin{cor}[Global uniqueness]\label{Cor: uniqueness}
	Adopting all statements and notations given in $\mathbf{(S)}$, let $\sigma_j$ be positive Lipschitz continuous scalar functions defined in $\R^n$ with $\sigma_j=1$ in $\Omega_e$. Suppose that the nonlocal Cauchy data 
	\begin{align*}
		\mathcal{C}^{(1)}_{W_T}=\mathcal{C}^{(2)}_{W_T},
	\end{align*}
	then 
	\[
	\sigma_1=\sigma_2 \text{ in }\Omega.
	\]
\end{cor}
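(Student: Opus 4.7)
The plan is to chain Theorem \ref{T1} with the classical uniqueness result for the local parabolic Calder\'on problem with scalar Lipschitz conductivity. Applying Theorem \ref{T1} directly to the hypothesis $\mathcal{C}^{(1)}_{W_T}=\mathcal{C}^{(2)}_{W_T}$ yields equality of the lateral boundary Cauchy data
\[
\mathcal{C}^{(1)}_{\Sigma_T}=\mathcal{C}^{(2)}_{\Sigma_T},
\]
i.e.\ for every admissible exterior datum $f$ the Dirichlet trace and the conormal derivative of the corresponding local parabolic solutions $v_1,v_2$ coincide on $\Sigma_T$. This reduces the nonlocal inverse problem entirely to a classical (local) parabolic Calder\'on problem with a scalar Lipschitz conductivity.

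Next, I would invoke the known uniqueness for the classical parabolic Calder\'on problem in the time-independent coefficient setting. Since $\sigma_j=\sigma_j(x)$ does not depend on $t$, one may reduce to the elliptic Calder\'on problem by Laplace transforming in time (equivalently, by testing against separable solutions of the form $e^{\tau t}\phi(x)$, after an approximation/truncation argument handling the finite time horizon $(-T,T)$ and the vanishing initial condition $v_j(-T,x)=0$). The equality of parabolic Cauchy data then transfers to equality of the elliptic boundary Cauchy data for the family of shifted operators $-\nabla\cdot(\sigma_j\nabla)+\tau$ on $\partial\Omega$, for a dense set of parameters $\tau>0$.

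Finally, the classical elliptic Calder\'on theorem for positive scalar Lipschitz conductivities (Sylvester--Uhlmann in the smooth case together with the Haberman--Tataru/Caro--Rogers extensions to Lipschitz regularity in $n\geq 3$, and Astala--P\"aiv\"arinta in $n=2$) then forces $\sigma_1=\sigma_2$ in $\Omega$. The main obstacle I anticipate is the parabolic-to-elliptic reduction itself: the finite time window $(-T,T)$ combined with the condition $v_j(-T,x)=0$ rules out a direct use of exponential-in-time solutions, so one needs either a density/approximation argument exploiting the smoothness and compact support of $f\in C^\infty_c((\Omega_e)_T)$, or a duality-based identification of the parabolic Dirichlet-to-Neumann map with the Laplace transform of the elliptic Dirichlet-to-Neumann maps. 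Either route is by now standard, and once it is carried out the elliptic Calder\'on uniqueness closes the proof.
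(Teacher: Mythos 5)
Your first step coincides exactly with the paper's: apply Theorem \ref{T1} to pass from equality of the exterior nonlocal Cauchy data to equality of the lateral boundary Cauchy data for the local parabolic problems. Where you diverge is in how you then conclude $\sigma_1=\sigma_2$. The paper takes a single, short step: it cites a dedicated parabolic uniqueness theorem (\cite[Theorem 1.3]{canuto2001determining}, Canuto--Kavian), which establishes identifiability of the conductivity from the parabolic Cauchy data directly via completeness of products of parabolic solutions, with no detour through the elliptic theory. You instead propose to manufacture the elliptic Calder\'on data from the parabolic one (Laplace transform / separable solutions, handling the finite window $(-T,T)$ and the initial condition $v_j(-T,\cdot)=0$ by an analyticity or density argument), and then invoke the elliptic Calder\'on uniqueness for Lipschitz scalar conductivities (Astala--P\"aiv\"arinta for $n=2$, Haberman--Tataru/Caro--Rogers for $n\geq 3$). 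Your route is workable and you flag the genuine technical hurdle correctly --- a naive exponential-in-time ansatz $e^{\tau t}\phi(x)$ violates the vanishing initial condition, so one must either use time-analyticity of the parabolic kernel to extend the equality of DN maps beyond $(-T,T)$ before Laplace transforming, or approximate. What the paper's choice buys is exactly avoiding that extra layer of machinery: Canuto--Kavian produces the conclusion at the level of the parabolic problem. What your route buys is a reduction to the most heavily studied form of the problem and, if carried out carefully, possibly weaker regularity requirements on $\sigma$ in high dimension, at the cost of a nontrivial parabolic-to-elliptic bridging lemma that the paper simply never needs to state.
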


Next, we are also interested in the case that the leading coefficient is a matrix-valued function. For the local case (i.e. $s=1$), the non-uniqueness result has been investigated by \cite{guenneau2012transformation}, and we recall the result as follows. Let $\sigma(x)=\LC\sigma_{ij}(x)\RC_{1\leq i, j\leq n}$ be a Lipschitz continuous matrix-valued function satisfying \eqref{ellipticity condition}. 
Let $\mathbf{F}:\overline{\Omega}\to \overline{\Omega}$ be a $C^\infty$ diffeomorphism with $\mathbf{F}|_{\p \Omega}=\mathbf{Id}$ (the identity map).
It is known that if $v(t,x)$ is a solution to 
\begin{align*}
	\p_t  v-\nabla \cdot (\sigma \nabla v) =0 \text{ for }(t,x)\in \Omega_T
\end{align*}
if and only if $\wt v(t,y):=v(t,\mathbf{F}^{-1}(y))$ is a solution to 
\begin{align*}
	\mathbf{F}_\ast 1(y)\p_t \wt v-\nabla \cdot (\mathbf{F}_\ast\sigma \nabla \wt v) =0 \text{ for }(t,y)\in \Omega_T,
\end{align*}
where $\mathbf{F}_\ast$ denotes the \emph{push-forward} as 
\begin{align*}
	\begin{cases}
		\mathbf{F}_\ast 1 (y) =\left. \frac{1}{\det (D\mathbf{F})(x)}\right|_{x=\mathbf{F}^{-1}(y)}, \\
		\mathbf{F}_\ast\sigma(y) = \left. \frac{D\mathbf{F}^T (x) \sigma (x)D\mathbf{F}(x)}{\det (D\mathbf{F})(x)}\right|_{x=\mathbf{F}^{-1}(y)} .
	\end{cases}
\end{align*}
Here $D\mathbf{F}$ stands for the (matrix) differential of $\mathbf{F}$ and $D\mathbf{F}^T$ is the transpose of $D\mathbf{F}$. Due to the fact that $\mathbf{F}|_{\p \Omega}=\mathbf{Id}$, one can see that the (lateral) Cauchy data are the same, i.e.,
\begin{align*}
	\mathcal{C}_{\Sigma_T}^{\sigma}:=\left\{ v|_{\Sigma_T}, \left. \sigma \p_\nu v \right|_{\Sigma_T}\right\}=\left\{ \wt v|_{\Sigma_T}, \left. \mathbf{F}_\ast \sigma \p_\nu \wt v \right|_{\Sigma_T}\right\}:=\mathcal{C}_{\Sigma_T}^{\mathbf{F}_\ast \sigma},
\end{align*}
which implies the non-uniqueness property holds for local parabolic operators.

Similar to the local case, our last main result in this paper is to show that non-uniqueness also holds for the nonlocal parabolic case.

\begin{thm}[Non-uniqueness]\label{T2}
	Adopting all statements and notations given in $\mathbf{(S)}$, let $W=W_1=W_2\Subset \Omega_e$ be an arbitrary nonempty open subset, and $\sigma_j$ be globally Lipschitz continuous matrix-valued function in $\R^n$ satisfying \eqref{ellipticity condition}. Suppose that the exterior Cauchy data 
	\begin{align*}
		\mathcal{C}^{(1)}_{W_T}=\mathcal{C}^{(2)}_{W_T},
	\end{align*}
	then there exists a Lipschitz invertible map $\mathbf{F}:\R^n\to \R^n$ with $\mathbf{F}:\overline{\Omega}\to \overline{\Omega}$ and $\mathbf{F}|_{\Omega_e}=\mathbf{Id}$ (the identity map) such that 
	\[
	\sigma_2=\mathbf{F}_\ast \sigma_1 \text{ in }\Omega,
	\]
	where $\mathbf{F}_\ast$ denotes the push-forward of the map $\mathbf{F}$ of the form 
	\[
	 \mathbf{F}_\ast\sigma_1(y) = \left. \frac{D\mathbf{F}^T (x) \sigma_1(x)D\mathbf{F}(x)}{\det (D\mathbf{F})(x)}\right|_{x=\mathbf{F}^{-1}(y)}.
	\]
\end{thm}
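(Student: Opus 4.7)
The plan is to leverage Theorem \ref{T1} to reduce Theorem \ref{T2} to the classical (local) anisotropic parabolic Calder\'on problem, and then invoke the well-known non-uniqueness/uniqueness-up-to-diffeomorphism result in that setting. Since the hypothesis $\mathcal{C}^{(1)}_{W_T}=\mathcal{C}^{(2)}_{W_T}$ holds and $\sigma_j$ agree (both equal $\mathbf{I}_n$) on $\Omega_e$, Theorem \ref{T1} applies verbatim in the matrix-valued setting (the proof of Theorem \ref{T1} does not require $\sigma_j$ to be scalar, only the ellipticity and Lipschitz conditions \eqref{ellipticity condition}). This yields the crucial reduction
\[
\mathcal{C}^{(1)}_{\Sigma_T}=\mathcal{C}^{(2)}_{\Sigma_T},
\]
i.e., equality of the \emph{local} lateral boundary Cauchy data for the two local anisotropic parabolic operators $\mathcal{H}_1$ and $\mathcal{H}_2$.

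The second stage is to appeal to the local anisotropic parabolic Calder\'on result. The standard route is via the Laplace transform in time: for sufficiently nice exterior data $f\in C_c^\infty((\Omega_e)_T)$, one restricts to $f$ supported near $t=-T$ so that the trace of the solution $v_j(T,\cdot)$ can be made negligible, and one considers the transform $\hat v_j(\tau,x)=\int_{-T}^{T}e^{-\tau(t+T)}v_j(t,x)\,dt$ for $\tau>0$. Under the zero initial condition $v_j(-T,x)=0$, this converts the parabolic problem into the elliptic family
\[
-\nabla\cdot(\sigma_j\nabla \hat v_j)+\tau\hat v_j=0\quad\text{in }\Omega,\qquad \hat v_j|_{\Sigma}=\hat f(\tau,\cdot),
\]
for each $\tau>0$, and the equality of lateral parabolic Cauchy data translates into equality of the Cauchy data for this one-parameter family of elliptic problems. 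The anisotropic elliptic Calder\'on theory (Sylvester/Astala--Lassas--P\"aiv\"arinta in $n=2$, Lee--Uhlmann/Lassas--Taylor--Uhlmann in $n\geq 3$), applied to the family $-\nabla\cdot(\sigma_j\nabla\,\cdot)+\tau$, then produces a bi-Lipschitz (in our regularity class) diffeomorphism $\mathbf{F}:\overline{\Omega}\to \overline{\Omega}$ with $\mathbf{F}|_{\p\Omega}=\mathbf{Id}$ such that
\[
\sigma_2=\mathbf{F}_\ast\sigma_1\quad\text{in }\Omega,
\]
where $\mathbf{F}_\ast$ is precisely the push-forward written in the excerpt. (The $\tau$-independence of the diffeomorphism is automatic once it is pinned down from one value of $\tau$, since the push-forward formula uniquely determines $\sigma_2$ from $\sigma_1$ and $\mathbf F$.)

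Finally, I extend $\mathbf{F}$ to all of $\R^n$ by setting $\mathbf{F}|_{\Omega_e}=\mathbf{Id}$. Since $\mathbf{F}|_{\p\Omega}=\mathbf{Id}$ matches the identity from the outside and both pieces are Lipschitz, the glued map is globally Lipschitz, its inverse (defined similarly using $\mathbf{F}^{-1}$ on $\overline{\Omega}$ and $\mathbf{Id}$ on $\Omega_e$) is Lipschitz as well, and $\mathbf{F}(\overline\Omega)=\overline\Omega$ by construction. This delivers the conclusion of Theorem \ref{T2}.

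The principal obstacle is the second step: the anisotropic Calder\'on problem is genuinely harder than the isotropic case and the proof must account for only Lipschitz regularity of $\sigma_j$. In particular, one must either cite the appropriate low-regularity version of the anisotropic uniqueness (available via the 2D Astala--P\"aiv\"arinta machinery and, in higher dimensions, via the reduction-to-Riemannian-metric viewpoint combined with boundary determination of $\sigma_j$ and its derivatives from the parabolic Cauchy data), or adapt it to the parabolic setting. A technical subtlety is that the push-forward formula contains $\det D\mathbf{F}$ and thus the parabolic time-coefficient $\mathbf{F}_\ast 1$ need not be $1$; this is why the reduction proceeds through the elliptic spectral family rather than by trying to preserve the parabolic form $\p_t-\nabla\cdot(\sigma\nabla\cdot)$ directly under the diffeomorphism.
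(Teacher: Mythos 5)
There is a genuine gap, and your argument also runs in the opposite direction from the paper's. The paper's proof of Theorem \ref{T2} does not start from $\mathcal{C}^{(1)}_{W_T}=\mathcal{C}^{(2)}_{W_T}$: it fixes a Lipschitz invertible map $\mathbf{F}$ with $\mathbf{F}|_{\Omega_e}=\mathbf{Id}$, applies Proposition \ref{Prop: trans opt} (the change of variables for the auxiliary system \eqref{U-equation}), and shows directly that $\sigma$ and $\mathbf{F}_\ast\sigma$ produce identical exterior Cauchy data on $W_T$. In other words, the paper exhibits indistinguishable pairs rather than deriving a diffeomorphism from equality of data; its argument is the converse of yours and makes no use of Theorem \ref{T1}.

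Your Step 2 cannot be completed for $n\geq 3$. Even granting the reduction via Theorem \ref{T1} to equal local lateral Cauchy data and the Laplace-transform passage to the elliptic family $-\nabla\cdot(\sigma_j\nabla\,\cdot)+\tau$, the anisotropic elliptic Calder\'on problem in $n\geq 3$ --- uniqueness of a matrix conductivity up to a boundary-fixing diffeomorphism from the DN map --- is \emph{open} even for $C^\infty$ coefficients; the Lee--Uhlmann and Lassas--Taylor--Uhlmann theorems require real-analyticity, not merely smoothness or Lipschitz regularity. You flag the regularity subtlety but underestimate it: this is not a low-regularity refinement of a known smooth theorem, it is an unsolved problem (only the $n=2$ case, via Astala--Lassas--P\"aiv\"arinta, is settled for rough anisotropic conductivities). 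Your Step 1 (Theorem \ref{T1} extends verbatim to matrix-valued $\sigma_j$) and your final gluing of $\mathbf{F}$ with the identity on $\Omega_e$ are correct; the failure is entirely in the anisotropic determination step, which is precisely the step the paper's converse-direction proof avoids.
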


The paper is organized as follows. In Section \ref{Sec 2}, we recall the well-posedness for both local and nonlocal parabolic equations, and review several function spaces which are used in this work. We also provide a rigorous definition for the nonlocal parabolic operator $\mathcal{H}^s$ for $0<s<1$, which is defined via the evolutive heat semigroup. 
In Section \ref{Sec 3}, we derive a new equation, which plays an essential role in the study of this problem. Meanwhile, we show novel Carleman estimates in order to prove the unique continuation property for the new equation.
In Section \ref{Sec 4}, we prove Theorem \ref{T1}.
In particular, we demonstrate a fact that any solution of local parabolic equations can be approximated by solutions of some nonlocal parabolic equations. Finally, we show both global uniqueness and non-uniqueness results for nonlocal parabolic equations in Section \ref{Sec 5}.

\section{Preliminaries}\label{Sec 2}

In this section, we provide fundamental properties for both local and nonlocal parabolic equations.
We first review the definition of weak solutions and well-posedness for the local linear parabolic equation \eqref{local para}, which can be found in \cite[Chapter 7]{Evan}.

Consider the local parabolic equation \eqref{local para}, and consider a function $\wt f$ defined on $\overline{\Omega_T}$ such that $\wt f|_{\Sigma_T}=f$. Let $w:=u-f$ in $\overline{\Omega_T}$, then $w$ solves 
\begin{align}\label{local para zero BC}
 	\begin{cases}
 		  \mathcal{H}w=F &\text{ in }\Omega_T:=\Omega\times (-T,T),\\
 		w(t,x)=0 &\text{ on }\Sigma_T:=\Sigma  \times (-T,T), \\
 		w(-T,x)=g(x) &\text{ for }x\in \Omega,
 	\end{cases}
\end{align}
where $F=-\mathcal{H}\wt f$ and $g=-\wt f(-T,x)$. 
Define the bilinear form $B[w,\varphi;t]$ by 
\[
B[w,\varphi;t]:=\int_{\Omega}\sigma(x) \nabla_xw(x,t) \cdot \nabla_x\varphi(x)\, dx,
\]
for any $\varphi\in H^1_0(\Omega)$. Then we are able to define the concept of weak solutions of \eqref{local para zero BC}.

 \begin{defi}[Weak solutions]
 A function $w\in L^2(-T,T;H^1_0(\Omega))$ with $\p_t w\in L^2 (-T,T;H^{-1}(\Omega))$ is called a weak solution of the initial boundary value problem \eqref{local para zero BC} if $w$ satisfies the following conditions:
 \begin{itemize}
 	\item[(a)] $\int_{\Omega} \p_t w(t,x) \varphi (x)\, dx +B[w,\varphi;t]=\int_{\Omega}F(t,x)\varphi(x)\, dx$, for any $\varphi \in H^1_0(\Omega)$ and for almost every (a.e.) time $t\in [-T,T]$,
 	\item[(b)] $w(-T,x)=g(x)$.
 \end{itemize}
\end{defi}

With the definition of weak solutions at hand, we have the following result.

\begin{lem}[Well-posedness]\label{wellpose1}
	Let  $\sigma=\LC \sigma_{ik} \RC_{1\leq i , k\leq n}$ be a Lipschitz continuous matrix-valued function satisfying \eqref{ellipticity condition}.
	For  any  $g\in L^2(\Omega)$,  $ F\in  L^2(-T,T;L^2(\Omega))$, the parabolic equation \eqref{local para zero BC} admits a unique weak solution $w\in L^2(-T,T;H^1_0(\Omega))$. Moreover, 
	\begin{align*}
		&\max_{0\leq t\leq T}\norm{w}_{L^2(\Omega)}+\norm{w}_{L^2(-T,T;H^1_0(\Omega))}+ \norm{\p_t w}_{L^2(-T,T;H^{-1}(\Omega))}\\
		&\quad \leq C \LC \norm{F}_{L^2(-T,T;L^2(\Omega))}+ \norm{g}_{L^2(\Omega)}\RC,
	\end{align*}
for some constant $C>0$ depending on $\Omega$, $T$ and $\sigma$.
\end{lem}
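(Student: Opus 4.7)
The plan is to carry out the standard Galerkin construction, which for this problem reduces to verifying that the bilinear form
\[
B[w,\varphi;t] = \int_{\Omega} \sigma(x)\nabla w(t,x) \cdot \nabla \varphi(x)\, dx
\]
is bounded and coercive on $H^1_0(\Omega)$ uniformly in $t\in[-T,T]$. Both properties follow at once from the ellipticity condition \eqref{ellipticity condition}: boundedness from the upper bound $|B[w,\varphi;t]|\leq c_0^{-1}\|\nabla w\|_{L^2}\|\nabla \varphi\|_{L^2}$, and coercivity from $B[w,w;t]\geq c_0\|\nabla w\|_{L^2}^2$ combined with Poincar\'e's inequality. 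The Lipschitz hypothesis on $\sigma$ is far more than is needed for this step; bounded measurable coefficients would already suffice.

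Concretely, first I would fix an orthonormal basis $\{\phi_k\}_{k\geq 1}$ of $L^2(\Omega)$ consisting of Dirichlet eigenfunctions of $-\Delta$, which simultaneously form an orthogonal basis of $H^1_0(\Omega)$, and look for an approximate solution of the form $w_m(t,x)=\sum_{k=1}^{m} d_k^m(t)\phi_k(x)$. Requiring that
\[
\int_{\Omega} \partial_t w_m(t,x)\phi_k(x)\,dx + B[w_m,\phi_k;t] = \int_{\Omega} F(t,x)\phi_k(x)\,dx \quad (k=1,\ldots,m),
\]
together with the initial condition $d_k^m(-T)=\int_{\Omega}g\phi_k\,dx$, yields a linear ODE system whose coefficients depend measurably and boundedly on $t$; Carath\'eodory's theorem produces a unique absolutely continuous $w_m$ on $[-T,T]$.

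Next I would derive the energy estimate by testing against $w_m$ itself (i.e.\ multiplying by $d_k^m$ and summing over $k$). Coercivity gives
\[
\tfrac{1}{2}\tfrac{d}{dt}\|w_m(t)\|_{L^2(\Omega)}^2 + c_0\|\nabla w_m(t)\|_{L^2(\Omega)}^2 \leq \|F(t)\|_{L^2(\Omega)}\|w_m(t)\|_{L^2(\Omega)}.
\]
Integrating from $-T$ to $t$, applying Young's inequality to absorb one factor of $\|w_m\|_{L^2}$, and invoking Gronwall's inequality produces a bound for $\max_t \|w_m(t)\|_{L^2(\Omega)}^2 + \|w_m\|_{L^2(-T,T;H^1_0(\Omega))}^2$ by $C(\|F\|_{L^2(-T,T;L^2(\Omega))}^2+\|g\|_{L^2(\Omega)}^2)$, uniformly in $m$. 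The bound on $\partial_t w_m$ in $L^2(-T,T;H^{-1}(\Omega))$ then follows by duality: for any $\varphi\in H^1_0(\Omega)$ with $\|\varphi\|_{H^1_0}\leq 1$, splitting $\varphi=\varphi^{(m)}+\varphi^\perp$ along the basis and using the Galerkin equation expresses $\langle \partial_t w_m(t),\varphi\rangle$ in terms of $F$ and $B[w_m,\cdot;t]$, both of which are already controlled.

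Finally, by weak compactness I would extract a subsequence $w_{m_j}\rightharpoonup w$ in $L^2(-T,T;H^1_0(\Omega))$ with $\partial_t w_{m_j}\rightharpoonup \partial_t w$ in $L^2(-T,T;H^{-1}(\Omega))$, identify $w$ as a weak solution by passing to the limit in the finite-dimensional identity tested against arbitrary finite combinations of the $\phi_k$, and invoke the Aubin--Lions lemma to ensure $w\in C([-T,T];L^2(\Omega))$ and the initial condition $w(-T)=g$ is attained. Uniqueness is immediate from the same estimate: the difference of two weak solutions satisfies the homogeneous problem with zero data, so the energy inequality forces it to vanish. I do not anticipate any genuine obstacle here; the only mildly delicate points are the duality argument for the $\partial_t w$ bound and the identification of the initial trace, both of which are completely standard in the Evans treatment \cite[Ch.~7]{Evan}.
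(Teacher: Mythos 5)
Your proposal is correct and is essentially the same proof the paper has in mind: the paper does not spell out its own argument for Lemma~\ref{wellpose1} and instead references the standard Galerkin construction in \cite[Chapter~7]{Evan}, which is precisely what you carry out.
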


Notice that given arbitrary lateral Dirichlet data $f\in L^2(-T,T;H^{3/2}(\Sigma))$, there exists $\wt f\in L^2(-T,T;H^2(\Omega))$ such that $\wt f=f$ on $\Sigma_T$ in the trace sense.

\subsection{The nonlocal parabolic operator $\mathcal{H}^s$}

The definition for the nonlocal parabolic operator $\mathcal{H}^s$ can be found in \cite{BDLCS2021harnack,BKS2022calderon}. In the rest of this paper, we adopt the notation 
\[
\mathcal{L}:=-\nabla \cdot (\sigma\nabla)
\]
to denote the second order elliptic operator of divergence form, where $\sigma=\LC \sigma_{ik}\RC_{1\leq i, k \leq n}$ is a matrix-valued function given via \eqref{ellipticity condition} in $\overline{\Omega}$, and we define $\sigma_{ik}=\delta_{ik}$ in $\Omega_e$, for $i,k=1,\ldots, n$. With this positive definite matrix-valued function $\sigma$ defined in the whole $\R^n$, we assume that the parabolic operator $\p_t+\mathcal{L}$ in $\R\times \R^n$ possesses a globally defined fundamental solution $p( x,z,\tau )$, which satisfies 
\[
\mathcal{P}_t 1(t,x)=\int_{\R^n} p(x,z,\tau)\, dz=1, \text{ for every }x\in \R^n \text{ and }\tau>0,
\]
where $\mathcal{P}_t$ stands for the heat semigroup.

Consider the following evolution semigroup 
\begin{align}\label{e-semni gp}
	\mathcal{P}^\mathcal{H}_\tau u(t,x):=\int_{\R^n} p(x,z,\tau)u(t-\tau,z)\, dz, \quad \text{ for }u\in \mathcal{S}(\R^{n+1}),
\end{align}
where $p(x,z,\tau)$ is the heat kernel corresponding to $\p_{\tau}+\mathcal{L}$ and $\mathcal{S}(\R^{n+1})$ denotes the Schwarz space.
In addition, the heat kernel  $p(x,z,\tau)$ satisfies 
\begin{align}\label{est-heat-kernel-sec2}
	C_1 \LC \frac{1}{4\pi \tau}\RC^{n/2}e^{-\frac{c_1|x-z|^2}{4\tau}} \leq p(x,z,\tau)\leq C_2 \LC \frac{1}{4\pi \tau}\RC^{n/2}e^{-\frac{c_2|x-z|^2}{4\tau}},
\end{align}
for $j=1,2$, for some positive constants $c_1,c_2,C_1$ and $C_2$. Noticing that $\left\{\mathcal{P}^\mathcal{H}_\tau  \right\}_{\tau \geq 0}$ is a strongly continuous contractive semigroup such that $\norm{\mathcal{P}^\mathcal{H}_\tau u-u}_{L^2(\R^{n+1})}=\mathcal{O}(\tau)$. We are able to give the explicit definition of $\mathcal{H}^s$.

\begin{defi}
	Given $s\in (0,1)$ and $u\in \mathcal{S}(\R^{n+1})$, the nonlocal parabolic operator $\mathcal{H}^s$ can be defined via the Balakrishnan formula (see \cite{BKS2022calderon}) as 
	\begin{align}\label{H^s}
		\mathcal{H}^s u(t,x):=-\frac{s}{\Gamma(1-s)} \int_0 ^\infty \LC \mathcal{P}^\mathcal{H}_\tau u(t,x)-u(t,x) \RC \frac{d\tau}{\tau^{1+s}}.
	\end{align}
\end{defi}

One may also use the definition from \cite{BDLCS2021harnack} to define the nonlocal parabolic operator $\mathcal{H}^s$.
In further, by using the Fourier transform with respect to the time-variable $t\in \R$, one can express $\mathcal{H}^s u$ in terms of the Fourier transform. It is known that the heat semigroup $\left\{P_t\right\}_{t\geq 0}$ can be written by spectral measures as an identity of gamma functions:
\begin{align}
	\mathcal{P}_t =\int_0^\infty e^{-\lambda t}\, dE_{\lambda} \quad \text{ and }\quad -\frac{s}{\Gamma(1-s)}\int_0^\infty \frac{e^{-(\lambda+\mathbf{i}\rho)t}-1}{\tau^{1+s}}\, d\tau=(\lambda+\mathbf{i}\rho)^s,
\end{align}
for $\lambda>0$ and $\rho \in \R$, where $\mathbf{i}=\sqrt{-1}$.
Consider the Fourier transform $\mathcal{F}_t$ of $\mathcal{P}_\tau^{\mathcal{H}}u$ with respect to the $t$-variable, then we have 
\[
\mathcal{F}_t\LC\mathcal{P}_\tau^{\mathcal{H}}u \RC (\rho,\xi)=e^{-\mathbf{i}\rho \tau}\mathcal{P}_\tau \LC \mathcal{F}_tu(\rho,\cdot) \RC (\xi),
\]
which infers that the Fourier analogue of the definition \eqref{H^s}
\begin{align*}
	\mathcal{F}_t\LC \mathcal{H}^s u \RC (\rho,\cdot)=& -\frac{s}{\Gamma(1-s)}\int_0^\infty \frac{1}{\tau^{1+s}}\int_0^\infty \LC e^{-(\lambda+\mathbf{i}\rho)\tau}-1\RC\, dE_{\lambda}\LC \mathcal{F}_tu(\rho,\cdot) \RC d\tau\\
	 =& \int_0^\infty (\lambda+\mathbf{i}\rho)^s \, dE_{\lambda} \LC\mathcal{F}_tu(\cdot,\rho)\RC.
\end{align*}

\subsection{Function spaces}

We next turn to define several function spaces.
By using previous discussion, for any $u\in \mathcal{S}(\R^{n+1})$, one can write 
\[
\norm{\mathcal{F}_t \LC \mathcal{H}^su\RC( \rho,\cdot)}_{L^2(\R^n)}=\int_0^\infty \abs{\lambda+\mathbf{i}\rho}^{2s}\, d \norm{E_{\lambda}\LC \mathcal{F}_tu( \rho,\cdot)\RC}^2,
\]
for $\rho \in \R$. With this relation at hand, we can define the space $\mathbf{H}^{2s}(\R^{n+1})$ to be the completion of $\mathcal{S}(\R^{n+1})$ under the norm
\begin{align}
	\norm{u}_{\mathbf{H}^{2s}(\R^{n+1})}=\LC \int_{\R}\int_0^\infty \LC1 +\abs{\lambda+\mathbf{i}\rho}^2\RC^s  d\norm{E_{\lambda}\LC \mathcal{F}_tu(\rho,\cdot) \RC}^2 \, d\rho \RC^{1/2}.
\end{align}
Now,  let $r\in \R$ and $\mathcal{O}\subset \R^{n+1}$ be an open set, then one can define 
\begin{align*}
	\mathbf{H}^r(\R^{n+1})=&\Big\{ \text{Completion of }\mathcal{S}(\R^{n+1}) \text{ with respect to the norm}: \\
	& \qquad \quad \int_{\R}\int_0^\infty \LC1 +\abs{\lambda+\mathbf{i}\rho}^2\RC^{r/2}  d\norm{E_{\lambda}\LC \mathcal{F}_tu( \rho,\cdot) \RC}^2 \, d\rho \Big\}, \\
	\mathbf{H}^r(\mathcal{O})=&\left\{ u|_{\mathcal{O}}: \, u\in \mathbf{H}^r(\R^{n+1}) \right\},\\
	\widetilde{\mathbf{H}}^r(\mathcal{O})=&\text{closure of }C^\infty_c (\mathcal{O}) \text{ in }\mathbf{H}^r(\R^{n+1}).
\end{align*}
Moreover, we define 
\begin{align}
	\norm{u}_{\mathbf{H}^r(\mathcal{O})}:=\inf \left\{ \norm{v}_{\mathbf{H}^r(\R^{n+1})}:\,  v|_{\mathcal{O}} =u\right\}.
\end{align}
We also denote the dual spaces
\[
\mathbf{H}^{-r}(\mathcal{O})=\LC\widetilde{\mathbf{H}}^r(\mathcal{O})\RC^\ast \quad \text{ and }\quad \widetilde{\mathbf{H}}^{-r}(\mathcal{O})=\LC\mathbf{H}^{-r}(\mathcal{O})\RC^\ast. 
\]
On the other hand, given $a \in \R$, one may consider the parabolic type fractional Sobolev space 
\begin{align*}
	\mathbb{H}^a(\R^{n+1}):=\left\{ u\in L^2(\R^{n+1}):\,  \LC |\xi|^2+\mathbf{i}\rho \RC^{a/2}\widehat{u}(\rho,\xi )\in L^2(\R^{n+1})\right\},
\end{align*}
where $\widehat{u}(\xi,\rho)=\int_{\R^{n+1}}e^{-\mathbf{i}(t,x)\cdot (\rho,\xi)}u(x,t)\, dtdx$ denotes the Fourier transform of $u$ with respect to both $t$ and $x$ variables.

Meanwhile, the graph norm of $\mathbb{H}^a$-functions is given by 
\begin{align}\label{space H}
	\norm{u}_{\mathbb{H}^a(\R^{n+1})}^2:=\int_{\R^{n+1}}\LC 1+\LC |\xi|^4+|\rho|^2 \RC^{1/2}\RC^{a/2} \widehat{u}(\rho,\xi)\, d\rho d\xi.
\end{align} 
In addition, one can express the space 
$$
\mathbb{H}^{a}(\R^{n+1})=\mathbb{H}^{a/2,a}(\R^{n+1}), 
$$
where the exponents $a/2$ and $a$ denote the (fractional) derivatives of time and space, respectively.
In particular, as $a=s\in (0,1)$, via the discussion in \cite{BKS2022calderon}, it is known that 
\begin{align}\label{space identification}
\mathbf{H}^s(\R^{n+1})=\mathbb{H}^s(\R^{n+1}), \text{ for }s\in (0,1),
\end{align}
and we denote 
\[
\mathbb{H}^s_E :=\left\{ u\in \mathbb{H}^s(\R^{n+1}):\, \supp(u)\subset E \right\},
\]
for any closed set $E\subset \R^{n+1}$.

\subsection{Initial exterior value problems}

In this section, let us consider the initial exterior value problem of \eqref{nonlocal para}. In order to study the well-posedness of the initial exterior value problem \eqref{nonlocal para}, as shown in \cite{BKS2022calderon,LLR2019calder}, one can consider the adjoint operator $\mathcal{H}^s_\ast$ of $\mathcal{H}^s$. More precisely,  $\mathcal{H}^s_\ast$ can be defined in terms of the spectral resolution via
\[
\mathcal{F}_t \LC  \mathcal{H}^s_\ast u\RC (\rho,\cdot )=\int_0^\infty (\lambda-\mathbf{i}\rho)^s \, dE_{\lambda}\LC \mathcal{F}_t u(\rho,\cdot )\RC,
\]
for $u\in \mathcal{S}(\R^{n+1})$. Furthermore, one has that $\mathcal{H}_\ast^s=\LC -\p_t +\mathcal{L} \RC^s$ for $s\in (0,1)$.

Next, for any $f,g\in \mathcal{S}(\R^{n+1})$, one can derive that 
\begin{align}\label{upper for pairing}
	\begin{split}
		\left\langle \mathcal{H}^s f , g \right\rangle_{\R^{n+1}} =&\left\langle \mathcal{H}^{s/2}f, \mathcal{H}^{s/2}_\ast g\right\rangle_{\R^{n+1}} =\left\langle f, \mathcal{H}^s_\ast g\right\rangle_{\R^{n+1}} \\
		=& \int_{\R}\int_0^\infty (\lambda+\mathbf{i}\rho)^s \, d\langle E_{\lambda} \mathcal{F}_t f, \overline{\mathcal{F}_t g}\rangle ( \rho , \cdot )\, d\rho \\
		\leq & C \norm{f}_{\mathbb{H}^s(\R^{n+1})}\norm{g}_{\mathbb{H}^s(\R^{n+1})},
	\end{split}
\end{align}
for some constant $C>0$ independent of $f$ and $g$. In view of \eqref{upper for pairing}, one has the mapping property $\mathcal{H}^s: \mathbb{H}^s(\R^{n+1})\to \mathbb{H}^{-s}(\R^{n+1})$, where $\mathbb{H}^{-s}(\R^{n+1})$ stands for the dual space of $\mathbb{H}^s(\R^{n+1})$.
In the rest of this paper, we adopt the notation $\langle \cdot, \cdot\rangle_{D}$ to denote the natural pairing between a function and its duality, where $D\subset \R^{n+1}$ is an arbitrary set. For instance, given $g\in \widetilde{H}^s(D)$, we can write 
\begin{align*}
	\left\langle  \mathcal{H}^s f, g \right\rangle_{D}=\left\langle  \mathcal{H}^s f, g \right\rangle_{\widetilde{H}^s(D)^\ast \times \widetilde{H}^s(D)},
\end{align*}
where $\widetilde{H}^s(D)^\ast$ stands for the dual space of $\widetilde{H}^s(D)$ for some set $D\subset \R^{n+1}$.

With these properties of $\mathcal{H}^s$ and $\mathcal{H}^s_\ast$ at hand, we can define the bilinear map $\mathbf{B}(\cdot, \cdot)$ on $\mathbb{H}^s(\R^{n+1})\times \mathbb{H}^s(\R^{n+1})$ via 
\begin{align*}
	\mathbf{B}(f,g):=\left\langle \mathcal{H}^{s/2}f,\mathcal{H}^{s/2}_\ast g  \right\rangle_{\R^{n+1}}.
\end{align*}
By \eqref{upper for pairing}, it is known that 
\[
\abs{\mathbf{B}(f,g)}\leq C \norm{f}_{\mathbb{H}^s(\R^{n+1})}\norm{g}_{\mathbb{H}^s(\R^{n+1})},
\]
for some constant $C>0$ independent of $f$ and $g$, which shows the boundedness of the bilinear form $\mathbf{B}(\cdot ,\cdot)$. On the other hand, for the coercive, one can consider the cutoff solution akin to \cite{LLR2019calder,BKS2022calderon} by considering 
\[
u_T(t,x):=u(t,x)\chi_{[-T,T]}(t),
\]
where $\chi_{[-T,T]}(t)=\begin{cases}
	1 & \text{ for }t\in [-T,T]\\
	0 &\text{ otherwise}
\end{cases}$ 
is the characteristic function. Moreover, $\chi_{[-T,T]}$ is also a multiplier in the fractional Sobolev space $H^a(\R^n)$, for $|a|\leq \frac{1}{2}$ (for example, see \cite[Theorem 11.4 in Chapter 1]{lions2012non}).

More precisely, the coercivity can be seen via 
\begin{align*}
	\left\langle \mathcal{H}^{s/2}f, \mathcal{H}^{s/2}_\ast f\right\rangle_{\R^{n+1}}=& \int_{\R}\int_0^\infty (\lambda+\mathbf{i}\rho)^s \, d\| E_{\lambda} \LC \mathcal{F}_t f\RC( \rho, \cdot)\|^2\, d\rho\\
	=&\int_{\R}\int_0^\infty |\lambda+\mathbf{i}\rho|^s\LC \cos(s\theta)+\mathbf{i}\sin(s\theta) \RC \, d\| E_{\lambda} \LC \mathcal{F}_t f\RC( \rho,\cdot)\|^2\, d\rho\\
	=&\int_{\R}\int_0^\infty |\lambda+\mathbf{i}\rho|^s \cos(s\theta) \, d\| E_{\lambda} \LC \mathcal{F}_t f\RC( \rho,\cdot)\|^2\, d\rho,
\end{align*}
where $\tan \theta =\frac{\rho}{\lambda}$ such that $\theta \in (-\pi/2,\pi/2)$ since $\lambda\geq 0$. Here we used that $\sin (s\theta)$ is an odd function so that the third identity holds in the preceding identities. Moreover, due the range of $\theta \in (-\pi/2,\pi/2)$ and $s\in (0,1)$, one can obtain that 
\[
\cos(s\theta)\geq \cos(s\pi/2):=C_s >0,
\]
so that 
\begin{align*}
\left\langle \mathcal{H}^{s/2}f, \mathcal{H}^{s/2}_\ast f\right\rangle_{\R^{n+1}}\geq  C_s\int_{\R}\int_0^\infty |\lambda+\mathbf{i}\rho|^s  \, d\| E_{\lambda} \LC \mathcal{F}_t u\RC( \rho,\cdot)\|^2\, d\rho \geq C \norm{f}_{L^2(\R^{n+1})}^2,
\end{align*}
for some constant $C>0$ independent of $f$. This proves the coercivity for the bilinear form $\mathbf{B}(\cdot, \cdot)$.

Moreover, as shown in \cite{BKS2022calderon,LLR2019calder}, the information of $u(t,x)|_{t>T}$ will not affect the behavior of the solution $u|_{\Omega_T}$, so we can define the weak solution of \eqref{nonlocal para} with the cutoff function.

\begin{defi}[Weak solutions]
	Let $\Omega\subset \R^n$ and $T>0$ be given as before. Given $F\in \LC \mathbb{H}^s_{\overline{\Omega_T}}\RC^{\ast}$ and $f \in \mathbf{H}^s((\Omega_e)_T)$. A function $u\in \mathbb{H}^s(\R^{n+1})$ is called a weak solution of 
	\begin{align}\label{wellposed-nonlocal}
		\begin{cases}
			\mathcal{H}^su=F &\text{ in }\Omega_T\\
			u(t,x)=f(t,x) &\text{ in }(\Omega_e)_T, \\
			u(t,x)=0 &\text{ for }t\leq -T \  \text{ and }\  x\in \R^n,
		\end{cases}
	\end{align}
if $v:=(u-f)_T\in \mathbb{H}^s_{\overline{\Omega_T}}$ and 
\[
\mathbf{B}(u,\phi)=\langle F,\phi \rangle_{\R^{n+1}}, \text{ for any }\phi \in \mathbb{H}^s_{\overline{\Omega_T}},
\]
or 
\[
\mathbf{B}(v,\phi)=\langle F-\mathcal{H}^s f,\phi \rangle_{\R^{n+1}},\text{ for any }\phi \in \mathbb{H}^s_{\overline{\Omega_T}}.
\]
\end{defi}

Now, the well-posedness of \eqref{wellposed-nonlocal} can be stated as follows.

\begin{prop}[Well-posedness]\label{Prop: wellposed}
	Let $\Omega\subset \R^n$ and $T>0$ be given as before. Given $F\in \LC \mathbb{H}^s_{\overline{\Omega_T}}\RC^{\ast}$ and $f \in \mathbf{H}^s((\Omega_e)_T)$. Then there exists a unique $u_T\in \mathbb{H}^s(\R^{n+1})$ with $(u-f)_T\in \mathbb{H}^s_{\overline{\Omega_T}}$ satisfying $\mathcal{H}^s u_T=F$ in $\Omega_T$, and 
	\[
	\norm{u_T}_{\mathbb{H}^s(\R^{n+1})}\leq C\LC \norm{F}_{(\mathbb{H}^s_{\overline{\Omega_T}})^\ast}+\norm{f}_{\mathbf{H}^s((\Omega_e)_T)} \RC,
	\]
	for some constant $C>0$ independent of $u$, $f$ and $F$.
\end{prop}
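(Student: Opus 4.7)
The plan is to prove Proposition 2.2 by a standard Lax--Milgram argument applied to the bilinear form $\mathbf{B}(\cdot,\cdot)$ on the closed subspace $\mathbb{H}^s_{\overline{\Omega_T}} \subset \mathbb{H}^s(\R^{n+1})$, after first reducing to the case of vanishing exterior datum. Concretely, given $f \in \mathbf{H}^s((\Omega_e)_T)$ I would extend it by zero to the region $\{t \leq -T\}$ and to $\Omega_T$ in such a way that the resulting extension $\wt f \in \mathbb{H}^s(\R^{n+1})$ has $\wt f = f$ on $(\Omega_e)_T$ and $\wt f = 0$ for $t \leq -T$, with $\|\wt f\|_{\mathbb{H}^s(\R^{n+1})} \lesssim \|f\|_{\mathbf{H}^s((\Omega_e)_T)}$. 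This reduces the problem to finding $v = (u-\wt f)_T \in \mathbb{H}^s_{\overline{\Omega_T}}$ solving
\[
\mathbf{B}(v,\phi) = \langle F - \mathcal{H}^s \wt f, \phi\rangle_{\R^{n+1}} \quad \text{for all } \phi \in \mathbb{H}^s_{\overline{\Omega_T}}.
\]

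Next I would verify the hypotheses of Lax--Milgram on $\mathbb{H}^s_{\overline{\Omega_T}}$. Boundedness of $\mathbf{B}$ is already given by \eqref{upper for pairing}. Coercivity is the point where one must upgrade the $L^2$-coercivity computed in the excerpt (using $\cos(s\theta) \geq \cos(s\pi/2) =: C_s > 0$) to full $\mathbb{H}^s$-coercivity. The spectral calculation there actually gives
\[
\mathbf{B}(\phi,\phi) \geq C_s \int_\R \int_0^\infty |\lambda + \mathbf{i}\rho|^s \, d\|E_\lambda(\mathcal{F}_t\phi)(\rho,\cdot)\|^2 \, d\rho,
\]
which controls the homogeneous $\mathbb{H}^s$-semi-norm. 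To close the loop I would invoke a parabolic Poincaré-type inequality: for $\phi \in \mathbb{H}^s_{\overline{\Omega_T}}$, support in a bounded set in both space and time yields $\|\phi\|_{L^2(\R^{n+1})} \lesssim_{\,\Omega,T}$ the $\mathbb{H}^s$-semi-norm, so that $\mathbf{B}(\phi,\phi) \gtrsim \|\phi\|_{\mathbb{H}^s(\R^{n+1})}^2$ on the subspace. The right-hand side $\phi \mapsto \langle F - \mathcal{H}^s \wt f, \phi\rangle_{\R^{n+1}}$ is a bounded linear functional on $\mathbb{H}^s_{\overline{\Omega_T}}$ of norm at most $C(\|F\|_{(\mathbb{H}^s_{\overline{\Omega_T}})^\ast} + \|\wt f\|_{\mathbb{H}^s(\R^{n+1})})$, using the mapping property $\mathcal{H}^s: \mathbb{H}^s(\R^{n+1}) \to \mathbb{H}^{-s}(\R^{n+1})$ noted after \eqref{upper for pairing}.

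Lax--Milgram then produces a unique $v \in \mathbb{H}^s_{\overline{\Omega_T}}$, and setting $u_T := v + \wt f$ gives the desired solution. Uniqueness in the stated class is immediate from uniqueness of $v$, since any two solutions yield a difference in $\mathbb{H}^s_{\overline{\Omega_T}}$ annihilating $\mathbf{B}(\cdot, \phi)$ for all test $\phi$, hence zero. The norm estimate follows by chaining the Lax--Milgram bound
\[
\|v\|_{\mathbb{H}^s(\R^{n+1})} \leq C \big(\|F\|_{(\mathbb{H}^s_{\overline{\Omega_T}})^\ast} + \|\wt f\|_{\mathbb{H}^s(\R^{n+1})}\big)
\]
with the continuity of the extension $f \mapsto \wt f$.

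The main obstacle I anticipate is precisely the upgrade from $L^2$-coercivity to $\mathbb{H}^s$-coercivity on $\mathbb{H}^s_{\overline{\Omega_T}}$: one needs a Poincaré-type estimate adapted to the anisotropic parabolic space $\mathbb{H}^s = \mathbb{H}^{s/2,s}$, valid for functions with compact support in $\overline{\Omega_T}$. This is essentially the same technical ingredient handled in \cite{LLR2019calder,BKS2022calderon} and can be borrowed from there; once available, everything else is a routine application of Lax--Milgram. A secondary technical point is ensuring that the chosen extension $\wt f$ lies in $\mathbb{H}^s(\R^{n+1})$ with the claimed control, which follows from standard extension theorems for the parabolic fractional Sobolev spaces combined with the identification \eqref{space identification}.
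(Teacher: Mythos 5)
Your plan is exactly the paper's: the authors prove boundedness and coercivity of $\mathbf{B}(\cdot,\cdot)$ immediately before the proposition and then state that the proof is ``based on the Lax--Milgram theorem for the bilinear map $\mathbf{B}(\cdot,\cdot)$ (a similar trick as in \cite{BKS2022calderon,LLR2019calder})'' and skip the details. Your reduction to $v=(u-\wt f)_T\in\mathbb{H}^s_{\overline{\Omega_T}}$, the use of the continuity estimate \eqref{upper for pairing}, and the norm bound are all the same. The only place where you add substance beyond the paper's sketch is in flagging that one must upgrade the spectral lower bound to full $\mathbb{H}^s$-coercivity on the subspace; in fact, once one has $\mathbf{B}(\phi,\phi)\gtrsim \|\phi\|_{L^2}^2$, the elementary inequality $(1+|\lambda+\mathbf{i}\rho|^2)^{s/2}\lesssim 1+|\lambda+\mathbf{i}\rho|^s$ already closes the gap without a separate Poincar\'e step, so your proposed additional ingredient is the $L^2$-coercivity for compactly supported $\phi$ (which is where the support in $\overline{\Omega_T}$ and the cutoff trick from \cite{LLR2019calder,BKS2022calderon} really enter, as you correctly anticipate). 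In short: same route, and your identification of the one nontrivial technical point is accurate.
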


With boundedness and coercivity of $\mathbf{B}(\cdot ,\cdot )$ at hand, the proof of Proposition \ref{Prop: wellposed} is based on the Lax-Milgram theorem for the bilinear map $\mathbf{B}(\cdot, \cdot)$ (a similar trick as in \cite{BKS2022calderon,LLR2019calder}), so we skip the detailed proof.

In addition, once we obtain the well-posedness of \eqref{nonlocal para}, we are able to define the corresponding exterior Cauchy data (or Dirichlet-to-Neumann map) via the bilinear form $\mathbf{B}(\cdot,\cdot)$ (same relation has been investigated in the works \cite{BKS2022calderon,LLR2019calder}). In fact, given arbitrarily open sets $W_1,W_2\subset \Omega_e$, the nonlocal Cauchy data is given by 
\begin{align*}
		\mathcal{C}_{(W_1)_T,(W_2)_T}:=\left\{ u|_{(W_1)_T}, \, \left. \mathcal{H}^s u \right|_{(W_2)_T} \right\} \subset (\mathbf{H}^s((W_1)_T))\times (\mathbf{H}^s((W_2)_T))^\ast,
\end{align*}
where the adjoint space $(\mathbf{H}^s((W_2)_T))^\ast$ can be verified by $\mathbf{B}(\cdot, \cdot)$. 

\subsection{The extension problem for $\mathcal{H}^s$}

We now review the extension problem for $\mathcal{H}^s$, which is a degenerate parabolic equation. Given $u\in \mathbf{H}^s(\R^{n+1})$, let $U=U(t,x,y)$ be the solution of the Dirichlet problem in $\R^{n+2}_+:=\R^{n+1}\times (0,\infty)$ 
\begin{align}\label{degen para PDE}
	\begin{cases}
		\mathcal{L}_s U= y^{1-2s}\p_t U -\nabla_{x,y}\cdot \LC y^{1-2s}\wt \sigma(x)\nabla_{x,y}U\RC=0 &\text{ in }\R^{n+2}_+,\\
		U(t,x,0)=u(t,x) &\text{ on }\R^{n+1},
	\end{cases}
\end{align}
where 
\begin{align*}
	\wt \sigma(x)=\left( \begin{matrix}
		\sigma(x) & 0\\
	0 & 1 \end{matrix} \right)
\end{align*}
denotes $(n+1)\times (n+1)$ matrix.
It is known that \eqref{degen para PDE} can be viewed as the parabolic counterpart as the famous Caffarelli-Silvestre extension problem of the fractional Laplacian (see \cite{caffarelli2007extension}) for $\mathcal{H}^s$.

For any open set $\mathcal{D}\subset \R^{n+1}\times (0,\infty)$, we define the weighted Sobolev space 
\[
H^1(\mathcal{D};y^{1-2s}dtdxdy):=\left\{U: \, \norm{U}_{H^1(\mathcal{D};y^{1-2s}dtdxdy)}<\infty\right\},
\]
where
\[
\norm{U}^2_{H^1(\mathcal{D};y^{1-2s}dtdxdy)}:=\int_{\mathcal{D}} y^{1-2s}\LC |U|^2+|\nabla_xU|^2 + |\p_yU|^2 \RC dtdxdy.
\]
Then we have the following result.

\begin{prop}[Extension problem]\label{Prop: extension}
	Let $s\in (0,1)$ and $u\in \mathbb{H}^s(\R^{n+1})$, then there is a solution $U=U(t,x,y)$ of \eqref{degen para PDE} such that 
    \begin{itemize}
    	\item[(1)] $\displaystyle\lim_{y\to 0+} U(\cdot, \cdot, y)=u(\cdot, \cdot)$ in $\mathbb{H}^s(\R^{n+1})$,
    	\item[(2)] $\displaystyle\lim_{y\to 0+}\frac{2^{1-2s}\Gamma(s)}{\Gamma(1-s)} y^{1-2s}\p_y U(\cdot, \cdot,y)=\mathcal{H}^s u$ in $\mathbb{H}^{-s}(\R^{n+1})$,
        \item[(3)] $\norm{U}_{H^1 (\R^{n+1}\times (0,M);y^{1-2s}dxdtdy)}\leq C_M \norm{u}_{\mathbb{H}^s(\R^{n+1})}$, where $C_M>0$ is a constant depending on $M$, which is independent of $u$ and $U$. 
    \end{itemize}
\end{prop}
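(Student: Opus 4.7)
The plan is to produce $U$ explicitly by the parabolic Caffarelli--Silvestre extension. Using the evolutive heat semigroup $\mathcal{P}^{\mathcal{H}}_\tau$ introduced in \eqref{e-semni gp}, I define, for $u\in \mathcal{S}(\R^{n+1})$,
\[
U(t,x,y) := \frac{y^{2s}}{4^s \Gamma(s)}\int_0^\infty e^{-y^2/(4\tau)}\,\mathcal{P}^{\mathcal{H}}_\tau u(t,x)\,\frac{d\tau}{\tau^{1+s}}, \qquad y>0,
\]
and extend to $u \in \mathbb{H}^s(\R^{n+1})$ by density once the energy bound (3) is available. The change of variable $r=y^2/(4\tau)$ produces the alternative representation
\[
U(t,x,y)=\frac{1}{\Gamma(s)}\int_0^\infty e^{-r}\,\mathcal{P}^{\mathcal{H}}_{y^2/(4r)}u(t,x)\,r^{s-1}\,dr,
\]
which is the convenient form for the Dirichlet trace.

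For the PDE and part (1), I use the intertwining
\[
\p_\tau \mathcal{P}^{\mathcal{H}}_\tau u=-\mathcal{H}\mathcal{P}^{\mathcal{H}}_\tau u,
\]
which follows from $(\p_\tau+\mathcal{L}_x)p(x,z,\tau)=0$ combined with the fact that $\mathcal{P}^{\mathcal{H}}_\tau$ commutes with $\p_t$. Differentiating under the $\tau$-integral and integrating by parts in $\tau$ gives
\[
\p_y^2 U+\frac{1-2s}{y}\p_yU - \LC \p_tU + \mathcal{L}_x U \RC =0 \quad\text{in }\R^{n+2}_+,
\]
which is the unweighted form of $\mathcal{L}_s U =0$. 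Strong continuity $\mathcal{P}^{\mathcal{H}}_\tau u\to u$ in $\mathbb{H}^s(\R^{n+1})$ as $\tau\to 0^+$, the second representation above, and $\int_0^\infty e^{-r}r^{s-1}\,dr=\Gamma(s)$ combine through dominated convergence to yield $U(\cdot,\cdot,y)\to u$ in $\mathbb{H}^s(\R^{n+1})$ as $y\to 0^+$.

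For part (2), I differentiate the first representation in $y$, apply the identity
\[
\p_\tau\LC e^{-y^2/(4\tau)}\tau^{-s}\RC = \tfrac{y^2}{4\tau^2}e^{-y^2/(4\tau)}\tau^{-s}-s\,\tau^{-s-1}e^{-y^2/(4\tau)},
\]
and integrate by parts in $\tau$; the boundary terms at $\tau=0,\infty$ vanish, leading, after subtracting the harmless additive constant $u(t,x)$ inside the integral, to
\[
y^{1-2s}\p_yU(t,x,y)=-\frac{s}{2^{2s-1}\Gamma(s+1)}\int_0^\infty e^{-y^2/(4\tau)}\bigl(\mathcal{P}^{\mathcal{H}}_\tau u(t,x)-u(t,x)\bigr)\,\frac{d\tau}{\tau^{1+s}}+o_y(1).
\]
As $y\to 0^+$ the $e^{-y^2/(4\tau)}$ factor tends monotonically to $1$, and the right-hand side reproduces the Balakrishnan formula \eqref{H^s} precisely up to the stated constant $\Gamma(1-s)/(2^{1-2s}\Gamma(s))$. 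Convergence is upgraded from pointwise to $\mathbb{H}^{-s}(\R^{n+1})$ by pairing against arbitrary $\varphi\in \mathbb{H}^s(\R^{n+1})$ and bounding the remainder via the spectral identity $\mathcal{F}_t(\mathcal{H}^s u)(\rho,\cdot)=\int_0^\infty(\lambda+\mathbf{i}\rho)^s\,dE_\lambda \mathcal{F}_t u(\rho,\cdot)$ developed earlier in Section \ref{Sec 2}.

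Finally, for (3), I pass to $\mathcal{F}_t$ in time together with the spectral decomposition $\{E_\lambda\}$ of $\mathcal{L}$. On each spectral mode parameterized by $\mu=\lambda+\mathbf{i}\rho$, the corresponding Fourier--spectral slice of $U$ satisfies an explicit Bessel-type ODE in $y$ whose $L^2(y^{1-2s}dy;(0,M))$-norm, together with that of the $y$- and $x$-derivatives, is controlled by $(1+|\mu|^s)\,\norm{E_\lambda\mathcal{F}_t u}$. Integrating in $\rho$ and summing spectrally gives the desired $\norm{U}_{H^1(\R^{n+1}\times(0,M);\,y^{1-2s}dxdtdy)}\le C_M\norm{u}_{\mathbb{H}^s(\R^{n+1})}$, which simultaneously validates the density argument used at the outset. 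The main obstacle is Step (2): tracking the $y\to 0^+$ asymptotics of $y^{1-2s}\p_yU$ sharply enough both to extract the precise constant and to realize the limit in $\mathbb{H}^{-s}(\R^{n+1})$ rather than merely pointwise, which is where the spectral analysis via $(\lambda+\mathbf{i}\rho)^s$ becomes indispensable.
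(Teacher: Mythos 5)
The paper itself does not prove Proposition~\ref{Prop: extension}; it cites \cite[Theorem~3.1]{BKS2022calderon} and omits the argument. Your contribution is therefore to supply what the paper imports, and the strategy you choose -- define $U$ by the Poisson--Balakrishnan kernel $\displaystyle U(t,x,y)=\frac{y^{2s}}{4^s\Gamma(s)}\int_0^\infty e^{-y^2/(4\tau)}\mathcal{P}^{\mathcal{H}}_\tau u(t,x)\,\frac{d\tau}{\tau^{1+s}}$, verify the degenerate PDE using $\partial_\tau\mathcal{P}^{\mathcal{H}}_\tau=-\mathcal{H}\mathcal{P}^{\mathcal{H}}_\tau$, obtain the Dirichlet trace by the $r=y^2/(4\tau)$ substitution, and get the Neumann trace by integrating by parts in $\tau$ -- is exactly the parabolic Caffarelli--Silvestre/Stinga--Torrea construction that the cited reference uses. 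So the architecture is right, parts (1) and (3) are sketched along standard lines, and the spectral reformulation $\mathcal{F}_t(\mathcal{H}^s u)(\rho,\cdot)=\int_0^\infty(\lambda+\mathbf{i}\rho)^s\,dE_\lambda\mathcal{F}_t u(\rho,\cdot)$ is precisely how one upgrades pointwise limits to $\mathbb{H}^{\pm s}$ convergence and proves the energy bound in $(3)$.

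There is, however, a concrete gap in your step (2), and you should not wave it away with ``$o_y(1)$'' and ``reproduces the Balakrishnan formula precisely up to the stated constant.'' Carrying out the two integrations by parts from your kernel gives, with all the $\tau^{-1-s}$ terms cancelling in the first IBP,
\[
y^{1-2s}\partial_y U \;=\; \frac{2^{1-2s}}{\Gamma(s)}\int_0^\infty e^{-y^2/(4\tau)}\,\partial_\tau\mathcal{P}^{\mathcal{H}}_\tau u\,\tau^{-s}\,d\tau
\;=\; \frac{2^{1-2s}s}{\Gamma(s)}\int_0^\infty e^{-y^2/(4\tau)}\bigl(\mathcal{P}^{\mathcal{H}}_\tau u-u\bigr)\frac{d\tau}{\tau^{1+s}}\;+\;o_y(1),
\]
whereas you wrote $-\dfrac{s}{2^{2s-1}\Gamma(s+1)}=-\dfrac{2^{1-2s}}{\Gamma(s)}$ in front: you are off by a factor of $-s$, i.e.\ you have the wrong sign and have lost the factor that comes from the second IBP hitting $\tau^{-s}$. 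Moreover, even with the corrected prefactor, inserting the Balakrishnan identity $\int_0^\infty(\mathcal{P}^{\mathcal{H}}_\tau u-u)\tau^{-1-s}\,d\tau=-\frac{\Gamma(1-s)}{s}\mathcal{H}^s u$ yields $\lim_{y\to0^+}y^{1-2s}\partial_y U=-\frac{2^{1-2s}\Gamma(1-s)}{\Gamma(s)}\mathcal{H}^s u$, which is \emph{not} what you get by inverting the constant $\frac{2^{1-2s}\Gamma(s)}{\Gamma(1-s)}$ appearing in item (2) of the proposition; they differ by a sign and by $2^{1-2s}$ versus $2^{2s-1}$. So as written, your derivation does not actually recover the stated Neumann trace; you need to redo the bookkeeping carefully, and in particular reconcile your kernel normalization (which is pinned down by item (1)) with the constant quoted from \cite[Theorem~3.1]{BKS2022calderon}. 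Until that check is done, step (2) of your proof is not complete.
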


The proof of the above proposition was shown in \cite[Theorem 3.1]{BKS2022calderon}, so we omit the proof.

\section{The new equation and its properties}\label{Sec 3}

Recall that $\mathcal{H}_j:=\p_t +\mathcal{L}_j$ is a parabolic operator, where $\mathcal{L}_j=-\nabla \cdot (\sigma_j\nabla)$, and $\sigma_j$ is a matrix-valued function satisfying \eqref{ellipticity condition} in $\R^n$, such that $\sigma_j=\mathbf{I}_n$ in $\Omega_e$, for $j=1,2$. Due to the definition of $\mathcal{H}_j$, it is not hard to see that 
$\left. \mathcal{H}_1\right|_{(\Omega_e)_T}=\left. \mathcal{H}_2\right|_{(\Omega_e)_T}=\left. \LC \p_t -\Delta\RC \right|_{(\Omega_e)_T} $ is the heat operator.

\subsection{Basic properties of the new equation}
Given arbitrarily nonempty open sets $W_1,W_2$ in $\Omega_e$ and $0<s<1$, let $f\in \mathbf{H}^s((W_1)_T)$, and by utilizing the well-posedness of the initial exterior value problem, there exists a unique solution $u_j\in \mathbb{H}^s(\R^{n+1})$ of
	\begin{align}\label{nonlocal para sec 3}
	\begin{cases}
		\LC \mathcal{H}_j \RC^su_j=0 &\text{ in }\Omega_T\\
		u_j=f &\text{ in }(\Omega_e)_T, \\
		u_j(t,x)=0 &\text{ for }t\leq -T \  \text{ and }\  x\in \R^n,
	\end{cases}
\end{align}
for $j=1,2$. With the condition \eqref{same nonlocal Cauchy data} at hand, one can always assume that 
\begin{align}\label{same nonlocal para}
	\LC \mathcal{H}_1\RC^s u_1=\LC \mathcal{H}_2\RC^s u_2 \text{ in }(W_2)_T.
\end{align}
Notice that the global unique continuation property for nonlocal parabolic equation has been studied by \cite[Theorem 1.3]{LLR2019calder} and \cite[Theorem 1.3]{BKS2022calderon}, for constant coefficients and variable coefficients nonlocal parabolic operators, respectively. However, even given $u_1=u_2=f$ in $(\Omega_e)_T$, with the condition \eqref{same nonlocal para}, one cannot apply the global unique continuation property directly in this work.
Thus, we need to analyze the relation \eqref{same nonlocal para} in a more detailed way.

Let $p_j(x,z,\tau)$ be the heat kernel corresponding to $\p_{\tau}+\mathcal{L}_j$ in $\R^n \times \R$ for $j=1,2$, which was introduced in Section \ref{Sec 2}. By using the notation \eqref{e-semni gp}, we consider the function 
\begin{align}\label{conv-heat-def}
	\mathbf{U}_j(t,\tau,x):=\mathcal{P}_\tau^{\mathcal{H}_j}u_j(t,x)=\int_{\R^n}p_j(x,z,\tau)u_j(t-\tau,z)\,  dz,
\end{align}
 for $j=1,2$. Unlike the (nonlocal) elliptic case as in \cite{GU2021calder}, the function $\mathbf{U}_j$ defined by \eqref{conv-heat-def} is no longer a solution to any parabolic equation. As a matter of fact, the next lemma plays an essential role in our study.
 
\begin{lem}\label{Lem: Key}
	Let $u_j \in \mathbb{H}^s(\R^{n+1})$ be the solution of \eqref{nonlocal para sec 3} and $\mathbf{U}_j$ be the function defined by \eqref{conv-heat-def}, then $\mathbf{U}_j$ solves 
		\begin{align}\label{U_j equation}
			\begin{cases}
				\LC \p_t +\p_\tau \RC \mathbf{U}_j(t,\tau,x)+\mathcal{L}_j \mathbf{U}_j(t,\tau,x)=0, & \text{ for }(t,\tau,x)\in(-T,T)\times (0,\infty)\times \R^n,  \\
					\mathbf{U}_j(t,0,x)=u_j(t,x) &\text{ for }(t,x)\in (-T,T)\times \R^n,\\
						\mathbf{U}_j(-T,\tau,x)=0 &\text{ for }(\tau,x )\in  (0,\infty) \times \R^{n}.
			\end{cases}
		\end{align}
\end{lem}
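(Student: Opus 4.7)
My plan is to verify the three items in \eqref{U_j equation} by direct computation, the PDE being the only nontrivial one. The two ``boundary'' identities are essentially immediate. At $\tau=0$ the heat kernel satisfies $p_j(x,z,0+) = \delta(x-z)$, so the convolution \eqref{conv-heat-def} reduces to $\mathbf{U}_j(t,0,x) = u_j(t,x)$. For the past-time condition, note that $u_j$ vanishes on $\{s \leq -T\} \times \R^n$ by the third line of \eqref{nonlocal para sec 3}, and $-T-\tau \leq -T$ whenever $\tau \geq 0$; hence the integrand defining $\mathbf{U}_j(-T,\tau,x)$ vanishes identically and $\mathbf{U}_j(-T,\tau,x)=0$.

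For the PDE, the key structural observation is that in the representation
$$\mathbf{U}_j(t,\tau,x) = \int_{\R^n} p_j(x,z,\tau)\, u_j(t-\tau,z)\, dz,$$
the factor $u_j(t-\tau,z)$ depends on $(t,\tau)$ only through the combination $t-\tau$, so $(\partial_t + \partial_\tau)[u_j(t-\tau,z)] = 0$. Consequently, when differentiating under the integral, only the $\tau$-derivative acting on the kernel survives, yielding
$$(\partial_t + \partial_\tau)\mathbf{U}_j(t,\tau,x) = \int_{\R^n} (\partial_\tau p_j)(x,z,\tau)\, u_j(t-\tau,z)\, dz.$$
Since $p_j(x,z,\tau)$ is the fundamental solution of $\partial_\tau + \mathcal{L}_j$ in the $(\tau,x)$ variables, one has $\partial_\tau p_j = -\mathcal{L}_j^x p_j$ for $\tau>0$. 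Substituting this and pulling $\mathcal{L}_j$ outside the integral (it acts only on the $x$-variable of the kernel) gives exactly $(\partial_t + \partial_\tau)\mathbf{U}_j = -\mathcal{L}_j \mathbf{U}_j$, which is the claimed equation.

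The mildest technical obstacle is the justification for interchanging $\partial_t$, $\partial_\tau$, and $\mathcal{L}_j$ with the integral in $z$, since $u_j \in \mathbb{H}^s(\R^{n+1})$ is not a priori classically differentiable and $\sigma_j$ is only Lipschitz. I would handle this by first proving the identity for Schwartz data, where the Gaussian bounds \eqref{est-heat-kernel-sec2} on $p_j$ (and analogous bounds on its derivatives) make every step dominated-convergence-legal, and then extending to general $u_j \in \mathbb{H}^s(\R^{n+1})$ by density and the strong continuity and contractivity of $\{\mathcal{P}^{\mathcal{H}_j}_\tau\}_{\tau\geq 0}$ recorded after \eqref{est-heat-kernel-sec2}. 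Equivalently, \eqref{U_j equation} may be read in the distributional sense: pair $\mathbf{U}_j$ with a test function $\varphi(t,\tau,x) \in C^\infty_c\bigl((-T,T)\times(0,\infty)\times\R^n\bigr)$, transfer derivatives onto $p_j$ via Fubini, and use the heat-kernel equation for $p_j$. Either interpretation suffices for the subsequent application of Lemma \ref{Lem: Key} in Section \ref{Sec 3}.
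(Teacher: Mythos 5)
Your proof is correct and is essentially the paper's own argument: both verify the two boundary identities the same way (the $\tau\to 0$ limit via the heat semigroup, and the vanishing of $u_j$ on $\{t\le -T\}$ for the past-time condition), and both obtain the PDE by exploiting the heat-kernel equation $\partial_\tau p_j=-\mathcal{L}_j p_j$ together with the fact that the $(t,\tau)$-dependence of $u_j(t-\tau,z)$ is through $t-\tau$ alone. The only cosmetic difference is that the paper starts by applying $\partial_\tau+\mathcal{L}_j$ and then trades $\partial_\tau$ for $-\partial_t$, whereas you apply $\partial_t+\partial_\tau$ directly and let the kernel equation supply $-\mathcal{L}_j$; the remarks on justifying differentiation under the integral are a welcome addition the paper leaves implicit.
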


\begin{proof}
 The  following arguments hold for $j=1,2$. With the definition \eqref{conv-heat-def} of $\mathbf{U}_j(x,t,\tau)$ at hand, a direct computation yields that 
 \begin{align}\label{comp 1}
 	\begin{split}
 		&\LC \p_\tau +\mathcal{L}_j \RC \mathbf{U}_j \\
 		= &\int_{\R^n } \left[\LC \p_\tau +\mathcal{L}_j \RC p_j(x,z,\tau)  \right] u_j (t-\tau, z)\, dz \\
 		& +\int_{\R^n}p_j(x,z,\tau)\p_\tau \LC u_j (t-\tau,z)\RC dz \\
 		=&\int_{\R^n}p_j(x,z,\tau)\p_\tau \LC u_j (t-\tau,z)\RC dz , 
 	\end{split}
 \end{align}
for $(t,\tau,x)\in (-T,T) \times (0,\infty) \times \R$, where we used that $p_j(x,z,\tau)$ is the heat kernel of $\p_{\tau}+\mathcal{L}_j$, for $j=1,2$. 
By interchanging the derivatives of $\tau$ and $t$, the right hand side of \eqref{comp 1} can be rewritten as 
\begin{align*}
	\begin{split}
		\LC \p_\tau +\mathcal{L}_j \RC \mathbf{U}_j =&-\p_t\LC \int_{\R^n}p_j(x,z,\tau) u_j (t-\tau,z) \, dz\RC\\
		=&-\p_t \mathbf{U}_j \quad \text{ for }(t,\tau ,x)\in  (-T,T)\times (0,\infty) \times \R^n, 
	\end{split}
\end{align*}
which shows the first equation of \eqref{U_j equation} holds.
Meanwhile, it is not hard to see that 
\begin{align*}
	\mathbf{U}_j(t,0,x)=&\lim_{\tau \to 0}\mathcal{P}^{\mathcal{H}_j}_\tau  u_j(t,x)\\
	=&\lim_{\tau \to 0}\int_{\R^n}p_j(x,z,\tau)u_j(t-\tau ,z)\,  dz=u_j(t,x), \text{ for }(t,x)\in (-T,T) \times \R^n.
\end{align*}
Finally, since the parameter $\tau \in (0,\infty)$, one can directly find that 
\begin{align*}
	\mathbf{U}_j(-T,\tau,x)=\int_{\R^n}p_j(x,z,\tau)u_j(-T-\tau,z)\,  dz=0, \text{ for }(\tau,x)\in  (0,\infty) \times \R^n,
\end{align*}
where we used $u_j(-T-\tau,x)=0$ for $\tau >0$ and $j=1,2$. This proves the assertion.
\end{proof}

\begin{lem}\label{Lem: energy estimate}
	Adopting all notations in Lemma \ref{Lem: Key}, we have 
	\begin{align}\label{energy 0}
		\begin{split}
			\max_{-T\leq t\leq T}\int_0^{\infty} \int_{\R^n} |\mathbf{U}_j|^2\, dxd\tau+\int_{-T}^{T}\int_0^{\infty }\int_{\R^n}|\nabla \mathbf{U}_j|^2 \,dxd\tau dt  \leq C\norm{u_j}_{\mathbb{H}^s(\R^{n+1})},
		\end{split}
	\end{align}
 for some constant $C>0$ independent of $\mathbf{U}_j$ and $u_j$, for $j=1,2$.
\end{lem}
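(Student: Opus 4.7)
The plan is to treat the combined derivative $\partial_t+\partial_\tau$ as a ``time'' derivative along the characteristic direction $(1,1,0)$ in $(t,\tau,x)$-space, and apply a standard $L^2$ energy estimate for the transport-diffusion equation from Lemma~\ref{Lem: Key}. Since $u_j\in \mathbb{H}^s(\R^{n+1})$ does not come with enough a priori regularity to differentiate under the integral sign, I would first note that the smoothing property of the heat semigroup makes $\mathbf{U}_j(t,\tau,\cdot)$ smooth in $x$ for each $\tau>0$, and then perform the energy identity for $\tau\in[\epsilon,R]$ before passing to the limits $\epsilon\to 0^+$ and $R\to\infty$.

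The core computation is to multiply the equation $(\partial_t+\partial_\tau)\mathbf{U}_j -\nabla\cdot(\sigma_j\nabla\mathbf{U}_j)=0$ by $\mathbf{U}_j$ and integrate in $x\in\R^n$. Using integration by parts (together with suitable decay of $\mathbf{U}_j(t,\tau,\cdot)$ at spatial infinity from the Gaussian bound \eqref{est-heat-kernel-sec2}) and the ellipticity condition \eqref{ellipticity condition}, this yields
\begin{equation*}
\frac{1}{2}(\partial_t+\partial_\tau)\int_{\R^n}|\mathbf{U}_j(t,\tau,x)|^2\,dx + \int_{\R^n}\sigma_j\nabla\mathbf{U}_j\cdot\nabla\mathbf{U}_j\,dx = 0.
\end{equation*}
Writing $E(t,\tau):=\tfrac12\|\mathbf{U}_j(t,\tau,\cdot)\|_{L^2(\R^n)}^2$ and integrating over the rectangle $(t,\tau)\in(-T,T)\times(0,R)$, the fundamental theorem of calculus gives
\begin{equation*}
\int_0^R E(T,\tau)\,d\tau + \int_{-T}^T E(t,R)\,dt - \int_{-T}^T E(t,0)\,dt + c_0\int_{-T}^T\!\!\int_0^R\!\int_{\R^n}|\nabla\mathbf{U}_j|^2\,dxd\tau dt \leq 0,
\end{equation*}
where the term at $t=-T$ vanishes by the third line of \eqref{U_j equation}. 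Dropping the two nonnegative boundary terms on the left and using $E(t,0)=\tfrac12\|u_j(t,\cdot)\|_{L^2(\R^n)}^2$ bounds the gradient integral by $\tfrac{1}{2c_0}\|u_j\|_{L^2(\R^{n+1})}^2$ uniformly in $R$, and letting $R\to\infty$ controls the second term on the left-hand side of \eqref{energy 0}.

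For the first term, I would use the $L^2(\R^n)$-contraction of the semigroup $\mathcal{P}_\tau^{\mathcal{H}_j}$: since $\sigma_j$ is symmetric, the heat kernel $p_j(x,z,\tau)$ is symmetric in $(x,z)$ and satisfies $\int_{\R^n}p_j(x,z,\tau)\,dx=1$, so by Jensen's inequality
\begin{equation*}
\|\mathbf{U}_j(t,\tau,\cdot)\|_{L^2(\R^n)}^2 \leq \|u_j(t-\tau,\cdot)\|_{L^2(\R^n)}^2.
\end{equation*}
Integrating in $\tau\in(0,\infty)$ and using the substitution $s=t-\tau$ gives a bound by $\|u_j\|_{L^2(\R^{n+1})}^2$ that is independent of $t\in(-T,T)$, which controls the supremum. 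Finally, $\|u_j\|_{L^2(\R^{n+1})}\leq \|u_j\|_{\mathbb{H}^s(\R^{n+1})}$ finishes both estimates.

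The main technical obstacle is justifying the energy identity at the level of $\mathbb{H}^s$ regularity, since $\mathbf{U}_j(t,0,\cdot)=u_j(t,\cdot)$ is only fractionally differentiable in space-time. I expect this to be handled by a standard approximation argument: approximate $u_j$ by smooth compactly supported functions in $\mathbb{H}^s(\R^{n+1})$, carry out the energy identity for the approximants (where integration by parts is unambiguous), and pass to the limit using the stability estimate. A secondary nuisance is verifying that the boundary term $\int_{-T}^T E(t,R)\,dt$ stays finite as $R\to\infty$ (it will in fact tend to $0$ by the Gaussian decay of the heat kernel, but we do not need this since it only appears with a favorable sign).
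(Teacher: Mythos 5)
Your proof is correct. For the gradient term the argument coincides with the paper: multiply the equation from Lemma~\ref{Lem: Key} by $\mathbf{U}_j$, integrate by parts in $x$, integrate the resulting identity over $(t,\tau)\in(-T,T)\times(0,R)$, discard the nonnegative boundary terms, and use ellipticity. For the $\sup_t$ term, however, you take a genuinely different path. The paper obtains it from the same energy integration by keeping the upper limit of the $t$-integration as a free parameter $\tilde t\in(-T,T)$: the boundary term $\int_0^\infty\int_{\R^n}|\mathbf{U}_j|^2(\tilde t,\tau,x)\,dx\,d\tau$ then sits on the left-hand side with a good sign and is controlled directly, so all of \eqref{energy 0} follows from a single identity. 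You instead discard that boundary term and recover the first term of \eqref{energy 0} from the $L^2(\R^n)$-contraction of the heat semigroup: Jensen's inequality (using $\int p_j(x,z,\tau)\,dx=\int p_j(x,z,\tau)\,dz=1$, which requires the symmetry $p_j(x,z,\tau)=p_j(z,x,\tau)$ coming from $\sigma_j=\sigma_j^T$) gives $\|\mathbf{U}_j(t,\tau,\cdot)\|_{L^2(\R^n)}\le\|u_j(t-\tau,\cdot)\|_{L^2(\R^n)}$, and integrating in $\tau$ and shifting the time variable gives a bound uniform in $t$. Both approaches are valid; yours is a bit more modular (no need to keep the $t$-limit variable) at the cost of invoking the kernel symmetry, while the paper's extracts everything at once from the divergence identity. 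Your remarks about regularization via $\tau\mapsto\tau+\epsilon$ and approximation of $u_j$ are appropriate but, as in the paper, can be suppressed as routine; also note that the right-hand side of \eqref{energy 0} as written in the paper should carry a square on the norm, and both your argument and the paper's actually yield the $L^2(\R^{n+1})$ norm squared, which is then bounded by $\|u_j\|_{\mathbb{H}^s(\R^{n+1})}^2$.
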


\begin{proof}
	Multiplying \eqref{U_j equation} by $\mathbf{U}_j$, an integration by parts with respect to the $x$-variable yields that 
	\begin{align}\label{energy 1}
		\frac{\p_t+\p_\tau}{2}\LC \int_{\R^n}|\mathbf{U}_j|^2\, dx \RC+ \int_{\R^n}\sigma_j \nabla \mathbf{U}_j \cdot \nabla \mathbf{U}_j \,dx=0.
	\end{align}
    We next integrate \eqref{energy 1} with respect to both $t$ and $\tau$ variables, which gives rise to 
    \begin{align}\label{energy 2}
    \begin{split}
    	0=&\int_0^{\infty} \int_{\R^n} |\mathbf{U}_j|^2(\wt t, \tau ,x )\, dxd\tau -	\int_0^{\infty} \int_{\R^n} |\mathbf{U}_j|^2(-T, \tau ,x )\, dxd\tau  \\
    	& + \left[\int_{-T}^{\wt t} \int_{\R^n} |\mathbf{U}_j|^2( t, \tau ,x )\, dxdt \right]_{\tau=0}^{\tau=\infty}  \\	
    	& +2\int_{-T}^{\wt t}\int_0^{\infty }\int_{\R^n}\sigma_j \nabla \mathbf{U}_j \cdot \nabla\mathbf{U}_j \,dxd\tau dt  \\
    	=&\int_0^{\infty} \int_{\R^n} |\mathbf{U}_j|^2(\wt t, \tau ,x )\, dxd\tau -\int_{-T}^{\wt t} \int_{\R^n} |u_j|^2( t,x)\, dxdt\\
    	& +2\int_{-T}^{\wt t}\int_0^{\infty }\int_{\R^n}\sigma_j \nabla \mathbf{U}_j \cdot\nabla \mathbf{U}_j \,dxd\tau dt 
    \end{split}
    \end{align}
    for any $\wt t\in (-T,T)$, where we used 
    \[
    \lim_{\tau \to \infty}\mathbf{U}_j(t,\tau,x)=0
    \]
    from theheat kernel estimate \eqref{est-heat-kernel-sec2}. By rewriting \eqref{energy 2}, we have 
    \begin{align}\label{energy 3}
   	\begin{split}
   		\int_0^{\infty} \int_{\R^n} |\mathbf{U}_j|^2(\wt t, \tau ,x )\, dxd\tau+2\int_{-T}^{\wt t}\int_0^{\infty }\int_{\R^n}\sigma_j \nabla \mathbf{U}_j \cdot \nabla \mathbf{U}_j \,dxd\tau dt  \leq \norm{u}_{\mathbb{H}^s(\R^{n+1})}.
   	\end{split}
   \end{align}
    Combined with the ellipticity of $\sigma_j$, the inequality \eqref{energy 0} holds.
\end{proof}

With Lemma \ref{Lem: energy estimate} at hand, we immediately obtain the following result.

\begin{cor}\label{Cor: unique solution}
	The equation \eqref{U_j equation} possesses a unique solution.
\end{cor}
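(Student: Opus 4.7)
The plan is a direct linearity argument combined with the energy identity already established in Lemma \ref{Lem: energy estimate}. Assume for contradiction (or simply by linearity) that \eqref{U_j equation} admits two solutions $\mathbf{U}_j^{(1)}$ and $\mathbf{U}_j^{(2)}$ in the natural energy class, namely solutions belonging to $L^\infty(-T,T;L^2((0,\infty)\times\R^n))\cap L^2(-T,T;\dot H^1((0,\infty)\times\R^n))$ with the integrability properties used in the proof of Lemma \ref{Lem: energy estimate}. Set
\[
\mathbf{V}:=\mathbf{U}_j^{(1)}-\mathbf{U}_j^{(2)}.
\]
Then by linearity $\mathbf{V}$ satisfies the homogeneous version of \eqref{U_j equation}, namely $(\p_t+\p_\tau)\mathbf{V}+\mathcal{L}_j\mathbf{V}=0$ on $(-T,T)\times(0,\infty)\times\R^n$ together with $\mathbf{V}(t,0,x)=0$ on $(-T,T)\times\R^n$ and $\mathbf{V}(-T,\tau,x)=0$ on $(0,\infty)\times\R^n$.

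Next I would repeat verbatim the computation leading to \eqref{energy 3} in Lemma \ref{Lem: energy estimate}, but applied to $\mathbf{V}$. Multiplying the equation by $\mathbf{V}$ and integrating by parts over $\R^n$ in the $x$-variable yields
\[
\frac{\p_t+\p_\tau}{2}\LC\int_{\R^n}|\mathbf{V}|^2\,dx\RC+\int_{\R^n}\sigma_j\nabla\mathbf{V}\cdot\nabla\mathbf{V}\,dx=0,
\]
and then integrating in $t\in(-T,\tilde t)$ and $\tau\in(0,\infty)$ for any $\tilde t\in(-T,T)$ produces
\[
\int_0^\infty\int_{\R^n}|\mathbf{V}|^2(\tilde t,\tau,x)\,dxd\tau+2\int_{-T}^{\tilde t}\int_0^\infty\int_{\R^n}\sigma_j\nabla\mathbf{V}\cdot\nabla\mathbf{V}\,dxd\tau dt=0,
\]
since the boundary contribution at $t=-T$ vanishes by the initial condition, the contribution at $\tau=0$ vanishes because $\mathbf{V}(t,0,x)=0$, and the contribution at $\tau\to\infty$ vanishes thanks to the heat-kernel decay \eqref{est-heat-kernel-sec2} that places the solutions in the energy class. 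Ellipticity of $\sigma_j$ (the condition \eqref{ellipticity condition}) then forces $\mathbf{V}\equiv 0$ on $(-T,\tilde t)\times(0,\infty)\times\R^n$, and since $\tilde t\in(-T,T)$ is arbitrary, uniqueness follows.

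The only genuinely non-routine step is to justify the integration by parts and the vanishing of the $\tau=\infty$ boundary term for solutions a priori; once the solution class is fixed as in Lemma \ref{Lem: energy estimate} (which was proved for the particular $\mathbf{U}_j$ built from the heat semigroup), the same class provides enough integrability for a density/approximation argument to make the manipulations rigorous. No new Carleman estimate, no unique continuation, and no further regularity theory is required beyond what is already contained in Lemma \ref{Lem: energy estimate}, so the statement of Corollary \ref{Cor: unique solution} follows almost immediately.
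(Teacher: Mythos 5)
Your argument is correct and is essentially the paper's own: the paper likewise takes the difference of two solutions, observes that by linearity it satisfies the homogeneous problem with vanishing data at $\tau=0$ and $t=-T$, and applies the energy estimate of Lemma~\ref{Lem: energy estimate} (whose right-hand side then vanishes) to conclude the difference is zero. You merely unpack the energy identity more explicitly and flag the role of the solution class in justifying the boundary terms, which the paper leaves implicit.
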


\begin{proof}
	If there are two solutions with the same initials $\mathbf{U}_j(t,0,x)$ and $\mathbf{U}_j(-T,\tau,x)$,  then the right hand side of \eqref{energy 0} is zero. Therefore, the solution is unique.
\end{proof}

\subsection{The Carleman estimate}\label{Sec: Carleman}

The proof of main theorem are based on suitable Carleman estimates. In the rest of this section, we will derive the needed Carleman estimates. In fact, our aim is to derive Carleman estimates with the weight 
$$
\varphi_{\beta}=\varphi_{\beta}(x) =\exp(\psi(y)),
$$ where $\beta>0$, $y=-\log|x|$ and $\psi(y)=\beta y+\frac{1}{16}\beta e^{-y/2}$.
From \cite[Appendix]{KLW2016doubling}, $\psi(y)$ is a convex function satisfying
\begin{equation}\label{1.1}
	\left\{
	\begin{aligned}
		&\frac{1}{2}\beta\le \psi'\le \beta,\\
		&\mbox{dist}(2\psi',{\mathbb Z})+\psi''\geq \frac{1}{32}.
	\end{aligned}
	\right.
\end{equation}
Further,  $h$ satisfies that for any $C>0$ there exists $R_0>0$ such that
\begin{equation}\label{1.2}
	\frac{1}{16}|x|\beta\le \LC1+\psi''(-\ln |x|)\RC
\end{equation}
for all $\beta$ and $|x|\le R_0$.

We will modify the arguments of \cite[Lemma 2.1]{LW2022quantitative}.
First, let us introduce polar
coordinates in ${\mathbb R}^n \backslash {\{0\}}$ by writing $x=r
\omega$, with $r=|x|$, $\omega=(\omega_1,\cdots,\omega_n)\in
\mathbb S^{n-1}$. Using new coordinate $y=-\log r$, we obtain that
$$
\frac{\partial}{\partial x_j}=e^{y}\LC -\omega_j \partial_y +\Omega_j\RC,\quad 1\le j\le n,
$$
where $\Omega_j$ is a vector field in $\mathbb S^{n-1}$. We could check that
the vector fields $\Omega_j$ satisfy
$$\sum_{j=1}^n\omega_j\Omega_j=0\quad\text{and}\quad\sum_{j=1}^n\Omega_j\omega_j=n-1.$$
Since $r\rightarrow 0$ if and only if $y\rightarrow {\infty}$, we are interested in values of $y$ near $\infty$.

It is easy to see that
\begin{equation*}
	\frac{\partial ^2}{\partial x_j \partial x_{\ell}}=e^{2y}\LC -\omega_j
	\partial_y -\omega_j +\Omega_j\RC \LC -\omega_{\ell} \partial_y +\Omega_{\ell}\RC,\quad 1\le j,\ell\le n.
\end{equation*}
then the Laplacian becomes
\begin{equation*}
	e^{-2y}\Delta =\partial^2_y -(n-2)\partial_y +\Delta_\omega,
\end{equation*}
where $\Delta_\omega=\sum^n_{j=1}\Omega^2_j$ denotes the
Laplace-Beltrami operator on $\mathbb S^{n-1}$. Let us recall that the
eigenvalues of $-\Delta_\omega$ are $k(k+n-2)$, $k\in \mathbb{N}$, and denote
the corresponding eigenspaces are $E_k$, where $E_k$ is the space of
spherical harmonics of degree $k$.  We note that
\begin{equation}\label{1.3}
	\sum_{j}\iint |\Omega_j v|^2 dy d\omega=\sum_{k\geq 0}k(k+n-2)\int|
	v_k |^2 dy,
\end{equation}
where $v_k$ is the projection of $v$ onto $E_k$. Let
$$
\Lambda=\sqrt{\frac{(n-2)^2}{4}-\Delta_{\omega}},
$$
then $\Lambda$ is an elliptic first-order positive
pseudodifferential operator in $L^2(\mathbb S^{n-1})$. The eigenvalues of
$\Lambda$ are $k+\frac{n-2}{2}$ and the corresponding eigenspaces
are $E_k$ which represents the space of spherical harmonics of
degree $k$. Hence

\begin{align}\label{1.4}
	\Lambda=\sum_{k\geq 0}\LC  k+\frac{n-2}{2}\RC \pi_{k},
\end{align}
where $\pi_k$ is the orthogonal projector on $E_k$. Let
$$
L^{\pm}=\partial_y-\frac{n-2}{2}\pm\Lambda,
$$
then it follows that
\begin{equation*}
	e^{-2y}\Delta =L^+L^-=L^-L^+.
\end{equation*}
Denote $L_\beta^{\pm}=\partial_y-\frac{n-2}{2}\pm\Lambda-\psi'(y) $. Then we have that
$L_\beta^{\pm}v=e^{\psi(y)} L^{\pm} \LC e^{-\psi(y)}v\RC$ and
$e^{-2y}e^{\psi(y)}\Delta \LC  e^{-\psi(y)}v\RC=L_\beta^{+}L_\beta^{-}v=L_\beta^{-}L_\beta^{+}v$.

\begin{lem}\label{lem2.1}
	Let $\chi(t), \zeta(\tau)\in C_0^{2}(\mathbb R)$.
	There are sufficiently large constants $\beta_1$, depending on $n$,  such that for all $v(t,\tau,y,\omega)\in C^1(\R^2;C^{\infty}({\mathbb R}\times \mathbb S^{n-1}))$ and $\beta\geq \beta_1$
	with $\beta\in \mathbb{N}+\frac{1}{4}$, we have that
	\begin{align}\label{1.5}
		\begin{split}
				&\int |\chi \zeta(L_\beta^{+}L_\beta^{-}v-e^{-2y}\partial_tv-e^{-2y}\partial_\tau v)|^2+\int |\chi' \zeta e^{-2y}v|^2+\int |\chi \zeta' e^{-2y}v|^2\\
			&\qquad \gtrsim \sum_{j+|\alpha|\leq1}\beta^{2-2(j+|\alpha|)}\int (1+\psi'')|\partial_y^j\Omega^\alpha (\chi \zeta v)|^2,
		\end{split}
	\end{align}
	where  ${\rm supp} v(t,\tau,y,\omega)\subset \R^2\times (0,\infty)\times \mathbb S^{n-1}$.
\end{lem}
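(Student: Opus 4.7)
The plan is to adapt the elliptic Carleman estimate of \cite[Lemma 2.1]{LW2022quantitative} for $L_\beta^{+}L_\beta^{-}$ by treating the new parabolic piece $-e^{-2y}(\p_t+\p_\tau)$ as a non-self-adjoint zeroth-order perturbation in disguise. The workflow I would follow has four stages: absorb the cutoff $\chi(t)\zeta(\tau)$ into the unknown, decompose in spherical harmonics, Fourier-transform in $(t,\tau)$, and reduce to a one-parameter family of one-dimensional Carleman inequalities proved by a positive-commutator argument.

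\textbf{Step 1 (cutoff reduction).} Set $w:=\chi(t)\zeta(\tau)v$ and $P:=L_\beta^{+}L_\beta^{-}-e^{-2y}\p_t-e^{-2y}\p_\tau$. Since $L_\beta^{\pm}$ differentiates only in $(y,\omega)$, it commutes with $\chi\zeta$, while $[\chi\zeta,\p_t]=\chi'\zeta$ and $[\chi\zeta,\p_\tau]=\chi\zeta'$. A direct computation then gives
\[
Pw=\chi\zeta\LB L_\beta^{+}L_\beta^{-}v-e^{-2y}\p_t v-e^{-2y}\p_\tau v\RB+e^{-2y}\chi'\zeta\,v+e^{-2y}\chi\zeta'\,v,
\]
so the three positive terms on the left-hand side of \eqref{1.5} together dominate $\int|Pw|^{2}$. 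Hence it suffices to prove the Carleman inequality $\int|Pw|^{2}\gtrsim\sum_{j+|\alpha|\leq 1}\beta^{2-2(j+|\alpha|)}\int(1+\psi'')|\p_y^j\Omega^\alpha w|^{2}$ for $w$ compactly supported in $(t,\tau)$.

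\textbf{Step 2 (mode decomposition).} Using the spectral resolution \eqref{1.4} I would expand $w=\sum_{k\geq 0}\pi_kw$, so that $\Lambda\pi_kw=\mu_k\pi_kw$ with $\mu_k:=k+\tfrac{n-2}{2}$, and exploit the commuting factorization $L_\beta^{+}L_\beta^{-}=\LC\p_y-\tfrac{n-2}{2}-\psi'\RC^{2}-\Lambda^{2}$. The Fourier transform in $(t,\tau)$ turns $-e^{-2y}(\p_t+\p_\tau)$ into multiplication by $-i\tilde\rho\,e^{-2y}$ with $\tilde\rho:=\rho+\sigma$. By Plancherel in $(t,\tau)$ together with the orthogonality identity \eqref{1.3} in $\omega$, the four-variable bound reduces to the one-parameter family of one-dimensional inequalities
\[
\int_{\R}\LV Q_{k,\tilde\rho}u\RV^{2}dy\gtrsim\sum_{j=0,1}\beta^{2-2j}\int_{\R}(1+\psi'')|\p_y^j u|^{2}dy+(1+\mu_k^{2})\int_{\R}(1+\psi'')|u|^{2}dy,
\]
uniform in $k\in\N$ and $\tilde\rho\in\R$, with $Q_{k,\tilde\rho}:=\LC\p_y-\tfrac{n-2}{2}-\psi'\RC^{2}-\mu_k^{2}-i\tilde\rho\,e^{-2y}$.

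\textbf{Step 3 (the one-dimensional estimate and the main obstacle).} Writing
\[
\|Q_{k,\tilde\rho}u\|^{2}=\|Q_{k,0}u\|^{2}+\tilde\rho^{2}\|e^{-2y}u\|^{2}+2\tilde\rho\,\im\int_{\R}Q_{k,0}u\cdot\ol{u}\,e^{-2y}\,dy,
\]
the first term supplies the full desired right-hand side via the elliptic argument of \cite[Lemma 2.1]{LW2022quantitative}; this is where the spectral-gap bound \eqref{1.1} and the convexity bound \eqref{1.2} are invoked, and where the quantization $\beta\in\N+\tfrac14$ becomes essential. The second term is nonnegative and only helps. The hard part, and the genuinely new ingredient compared to the elliptic reference, is the cross term, which is not sign-definite and must be controlled uniformly in $\tilde\rho\in\R$: my plan is to integrate by parts in $y$ so that the antisymmetric component of $Q_{k,0}$ pairs with $[\p_y,e^{-2y}]=-2e^{-2y}$, producing an expression weighted by $(1+\psi'')$ that can be absorbed, by Cauchy--Schwarz, partly into the principal $(1+\psi'')$-weighted right-hand side and partly into the nonnegative $\tilde\rho^{2}\|e^{-2y}u\|^{2}$ gain, provided $\beta\geq\beta_1$ is taken sufficiently large. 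Summing the resulting one-dimensional estimate over $k\geq 0$ (so that $\mu_k^{2}|\pi_kw|^{2}$ reassembles the angular derivatives via \eqref{1.3}) and integrating in $(\rho,\sigma)$ finally recovers \eqref{1.5}.
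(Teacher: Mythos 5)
Your Steps~1 and~2 are correct and amount to a Fourier reformulation of what the paper does in physical space: dividing by $\chi\zeta$, diagonalizing in spherical harmonics, and (in your version) applying Plancherel in $(t,\tau)$ reduces everything to the one--parameter family $Q_{k,\tilde\rho}=\LC\partial_y-\tfrac{n-2}{2}-\psi'\RC^{2}-\mu_k^{2}-i\tilde\rho\,e^{-2y}$, which is exactly the paper's operator $\partial_y^2-\tilde b\partial_y+\tilde a-e^{-2y}(\partial_t+\partial_\tau)$ after Fourier transform in $(t,\tau)$.

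The gap is in Step~3, and it is not cosmetic. Write $Q_{k,0}=\partial_y^2-\tilde b\partial_y+\tilde a$ with $\tilde b=2\psi'+n-2$. A direct computation (integrating the $\partial_y^2$ term by parts once) gives
\begin{equation*}
\operatorname{Im}\int Q_{k,0}u\cdot\bar u\,e^{-2y}\,dy=\int(2-\tilde b)\,\operatorname{Im}(\partial_y u\,\bar u)\,e^{-2y}\,dy,
\end{equation*}
so the cross term in your expansion of $\|Q_{k,\tilde\rho}u\|^2$ is
\begin{equation*}
2\tilde\rho\int(\tilde b-2)\operatorname{Im}(\partial_y u\,\bar u)\,e^{-2y}\,dy.
\end{equation*}
Two things go wrong with your plan for this term. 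First, it does \emph{not} become a zeroth--order, $(1+\psi'')$--weighted quantity after integration by parts: the Wronskian $\operatorname{Im}(\partial_y u\,\bar u)$ is genuinely first order and cannot be integrated by parts away (only $\operatorname{Re}\int f\partial_y u\,\bar u$ reduces to $-\tfrac12\int f'|u|^2$; the imaginary part does not). Second, the prefactor is $\tilde b-2\sim 2\psi'\sim 2\beta$, not $1+\psi''$, so Cauchy--Schwarz gives $|\text{cross}|\leq \epsilon\beta^2\int|\partial_y u|^2+\epsilon^{-1}\tilde\rho^2\int e^{-4y}|u|^2$. To leave the $\tilde\rho^2\|e^{-2y}u\|^2$ gain intact you must take $\epsilon\gtrsim\beta^2$, which costs $\sim\beta^2\|\partial_y u\|^2$; but the elliptic Carleman estimate for $\|Q_{k,0}u\|^2$ only produces a margin of order $\psi'\|\partial_y u\|^2\sim\beta\|\partial_y u\|^2$ (compare the paper's \eqref{1.17}--\eqref{1.18}). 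So Cauchy--Schwarz loses a factor of $\beta$, and the loss gets \emph{worse}, not better, as $\beta\to\infty$ --- the ``$\beta\geq\beta_1$ large enough'' clause points in the wrong direction.

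What actually closes the argument is an exact completion of the square, not Cauchy--Schwarz. Expanding $\|Q_{k,\tilde\rho}u\|^2=\|Hu\|^2-2\int\tilde b\operatorname{Re}(Hu\,\overline{\partial_y u})+\int\tilde b^2|\partial_y u|^2+\tilde\rho^2\|e^{-2y}u\|^2+\text{(cross)}$ with $H=\partial_y^2+\tilde a$, the last three terms factor algebraically:
\begin{equation*}
\int\tilde b^2|\partial_y u|^2+\tilde\rho^2\!\int e^{-4y}|u|^2+2\tilde\rho\!\int(\tilde b-2)\operatorname{Im}(\partial_y u\,\bar u)e^{-2y}
=\int\bigl|(\tilde b-2)\partial_y u+i\tilde\rho e^{-2y}u\bigr|^2+4\!\int(\tilde b-1)|\partial_y u|^2.
\end{equation*}
The first integral is nonnegative and is discarded; the second is the gain $4(\tilde b-1)=4(2\psi'+n-3)\geq 8\psi'-C$, exactly what the elliptic argument needs. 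This is precisely the paper's identity \eqref{1.15} read in Fourier variables: there the $4e^{-2y}\partial_{t}(\chi\zeta v)\partial_y(\chi\zeta v)+4e^{-2y}\partial_\tau(\chi\zeta v)\partial_y(\chi\zeta v)$ terms arising from the square cancel the $-4\int e^{-2y}\partial_{t,\tau}(\chi\zeta v)\partial_y(\chi\zeta v)$ produced in \eqref{1.10}, \eqref{1.12}, and the perfect square $\bigl|(2\psi'+n-4)\partial_y+e^{-2y}\partial_t+e^{-2y}\partial_\tau\bigr|^2$ is dropped. The paper achieves the same cancellation entirely in physical space by integration by parts in $t$ and $\tau$ (and uses that $\tilde a$ is $(t,\tau)$--independent in \eqref{1.11}, \eqref{1.13}) without ever Fourier transforming; your reformulation is legitimate but the decisive algebraic step must be the completion of the square, and your sketch misidentifies it as a Cauchy--Schwarz absorption of a $(1+\psi'')$--weighted zeroth--order remainder.
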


\begin{proof}
	By diagonalizing $v=\sum_kv_k$ and $L_\beta^{+}L_\beta^{-}v=(\partial_y-\psi'+k)(\partial_y-\psi'-k-n+2)v_k$,  it is enough to prove that
\begin{align*}
		&\sum_{j\leq1}\int (1+\psi'')|(\beta^{2-2j}+k^{2-2j})\partial_y^j (\chi\zeta v)|^2\\
		&\qquad \lesssim\int|\chi \zeta (L_\beta^{+}L_\beta^{-}v-e^{-2y}\partial_tv-e^{-2y}\partial_\tau v)|^2+\int |\chi' \zeta e^{-2y}v|^2+\int |\chi \zeta' e^{-2y}v|^2,
\end{align*}
where we abuse the notation $v=v_k$.
By direct computations, we can have that
\begin{align}\label{1.6}
	\chi\zeta  (\partial_y-\psi'+k)(\partial_y-\psi'-k-n+2)v=\chi\zeta( \partial_y^2v-\tilde{b}\partial_yv+\tilde{a}v),
\end{align}
where
\begin{align*}
	\begin{cases}
		\tilde{a}=(\psi'-k)(\psi'+k+n-2)-\psi''\\
		\tilde{b}=2\psi'+n-2.
	\end{cases}
\end{align*}
It is helpful to note that
\[
\psi'=\beta-\frac{\beta}{32}e^{-y/2},\quad\psi''=\frac{\beta}{64}e^{-y/2},\quad\psi''=-\frac{\beta}{128}e^{-y/2}.
\]
We obtain from \eqref{1.6} that
\begin{align}\label{1.7}
	\begin{split}
		&4|\chi\zeta(L_\beta^{+}L_\beta^{-}v-e^{-2y}\partial_tv-e^{-2y}\partial_\tau v)|^2+4|\chi'\zeta e^{-2y}v|^2+4|\chi \zeta' e^{-2y}v|^2\\
		\geq&|\chi\zeta( \partial_y^2v-\tilde{b}\partial_yv+\tilde{a}v)- e^{-2y}\partial_t(\chi\zeta v)- e^{-2y}\partial_\tau (\chi\zeta v)|^2\\
		=&|H(v)|^2-2\tilde{b}\partial_y(\chi\zeta v)H(v)-2 e^{-2y}\partial_t(\chi\zeta v)H(v)-2 e^{-2y}\partial_\tau (\chi\zeta v)H(v)\\
		&+|\tilde{b}\partial_y(\chi\zeta v)+ e^{-2y}\partial_t(\chi\zeta v)+ e^{-2y}\partial_\tau (\chi\zeta v)|^2,
	\end{split}
\end{align}
where $H(v):=\chi\zeta \LC\partial_y^2v+\tilde{a}v\RC$. Now we write
\begin{equation}\label{1.8}
	\left\{
	\begin{aligned}
		&-2\int \tilde{b}\partial_y(\chi\zeta v)H(v) =-2\int \tilde{b}\partial_y(\chi\zeta v)\partial_y^2(\chi\zeta v)-2\int \tilde{a}\tilde{b}\chi\zeta v\partial_y(\chi\zeta v)\\
		&-2\int e^{-2y}\partial_t(\chi\zeta v)H(v) =
		-2\int e^{-2y}\partial_t(\chi\zeta v)\partial_y^2(\chi\zeta v)-2\int \tilde{a}\chi\zeta ve^{-2y}\partial_t(\chi\zeta v)\\
		&-2\int e^{-2y}\partial_\tau(\chi\zeta v)H(v) =
		-2\int e^{-2y}\partial_\tau(\chi\zeta v)\partial_y^2(\chi\zeta v)-2\int \tilde{a}\chi\zeta ve^{-2y}\partial_\tau(\chi\zeta v).\\
	\end{aligned}\right.
\end{equation}
Direct computations imply that
\begin{equation}\label{1.9}
	\left\{
	\begin{aligned}
		&-2\int \tilde{b}\partial_y(\chi\zeta v)\partial_y^2(\chi\zeta v)=2\int\psi''|\partial_y(\chi\zeta v)|^2,\\
		&-2\int \tilde{a}\tilde{b}\chi\zeta v\partial_y(\chi\zeta v)=\int\partial_y(\tilde{a}\tilde{b})|\chi\zeta v|^2,
	\end{aligned}\right.
\end{equation}

\begin{equation}\label{1.10}
	-2\int e^{-2y}\partial_t(\chi\zeta v)\partial_y^2(\chi\zeta v)=-4\int e^{-2y}\partial_t(\chi\zeta v)\partial_y(\chi\zeta v),
\end{equation}
\begin{equation}\label{1.11}
	\begin{aligned}
		-2\int \tilde{a}\chi\zeta ve^{-2y}\partial_t(\chi\zeta v)=0,
	\end{aligned}
\end{equation}

\begin{equation}\label{1.12}
	-2\int e^{-2y}\partial_\tau(\chi\zeta v)\partial_y^2(\zeta v)=-4\int e^{-2y}\partial_t(\chi\zeta v)\partial_y(\chi\zeta v),
\end{equation}
\begin{equation}\label{1.13}
	\begin{aligned}
		-2\int \tilde{a}\chi\zeta ve^{-2y}\partial_\tau(\chi\zeta v)=0.
	\end{aligned}
\end{equation}

Note that here $\tilde{a}$ is independent of $t, \tau$. Combining \eqref{1.7} to \eqref{1.13} yields
\begin{eqnarray}\label{1.14}
	&& \int |\chi\zeta( \partial_y^2v-\tilde{b}\partial_yv+\tilde{a}v)- e^{-2y}\partial_t(\chi\zeta v)- e^{-2y}\partial_\tau (\chi\zeta v)|^2\notag\\
	&\geq&\int\left( |H(v)|^2+|\tilde{b}\partial_y(\chi\zeta v)+ e^{-2y}\partial_t(\chi\zeta v)+ e^{-2y}\partial_\tau (\chi\zeta v)|^2\right) \notag\\
	&&+2\int \psi''|\partial_y(\chi\zeta v)|^2-4\int e^{-2y}\partial_t(\chi\zeta v)\partial_y(\chi\zeta v)-4\int e^{-2y}\partial_\tau(\chi\zeta v)\partial_y(\chi\zeta v)\notag\\
	&&+\frac{17}{3}\int (\psi')^2\psi''|\chi\zeta v|^2-2\int (k^2+nk-2k)\psi''|\chi\zeta v|^2
\end{eqnarray}
for $\beta\ge\beta_1$. It is helpful to remark that $\psi''>0$.

Likewise, we write
\begin{align}\label{1.15}
	\begin{cases}
			\quad |\tilde{b}\partial_y(\chi\zeta v)+ e^{-2y}\partial_t(\chi\zeta v)+ e^{-2y}\partial_\tau (\chi\zeta v)|^2\\
			=|(\tilde{b}-2)\partial_y(\chi\zeta v)+e^{-2y}\partial_t(\chi\zeta v)+e^{-2y}\partial_\tau (\chi\zeta v)+2\partial_y(\chi\zeta v)|^2\\
			=|(2\psi'+n-4)\partial_y(\chi\zeta v)+e^{-2y}\partial_t(\chi\zeta v)+e^{-2y}\partial_\tau (\chi\zeta v)|^2\\
			\quad+4(2\psi'+n-3)|\partial_y(\chi\zeta v)|^2 +4e^{-2y}\partial_t(\chi\zeta v)\partial_y(\chi\zeta v)+4 e^{-2y}\partial_\tau(\chi\zeta v)\partial_y(\chi\zeta v).\\
			\frac{1}{2}|H(v)|^2=\frac{1}{2}|H(v)+ 3\psi''\chi\zeta v|^2-3\psi''\chi\zeta vH(v)-\frac{9}{2}(\psi'')^{2}|\chi\zeta v|^2.\\
	\end{cases}
\end{align}
It is easy to check that
\begin{align}\label{1.16}
	&\quad\;-3\int \psi''\chi\zeta vH(v)\notag\\
	&=-3\int \psi''\chi^2\zeta^2 v(\partial_y^2v+\tilde{a}v)\notag\\
	&\ge 3\int \psi''|\partial_y(\chi\zeta v)|^2-\frac{10}{3}\int (\psi')^2\psi''|\chi\zeta v|^2+3\int (k^2+nk-2k)\psi''|(\chi\zeta v)|^2
\end{align}
for all $\beta\ge\beta_1$. Moreover, via \eqref{1.14}-\eqref{1.16}, we have that for $\beta\ge\beta_1$
\begin{align}\label{1.17}
	\begin{split}
		& \int |\chi\zeta( \partial_y^2v-\tilde{b}\partial_yv+\tilde{a}v)- e^{-2y}\partial_t(\chi\zeta v)- e^{-2y}\partial_\tau (\chi\zeta v)|^2\\
		\ge&\,8\int \psi'|\partial_y(\chi\zeta v)|^2+2\int (\psi')^2\psi''|\chi\zeta v|^2+\int k^2\psi''|(\chi\zeta v)|^2\\
		&+\frac{1}{2}\int|H(v)|^2.
	\end{split}
\end{align}
Now, we write that
\begin{align*}
		\frac{1}{4}\int|H(v)|^2=&\frac{1}{4}\int  \left|H(v)-\frac{\beta(\psi'-k)\chi\zeta v}{10|\beta-k|}\right|^2+\int\frac{\beta(\psi'-k)}{20|\beta-k|}\chi\zeta vH(v)\\
		&-\int\frac{\beta^2(\psi'-k)^2}{400|\beta-k|}|\chi\zeta v|^2\\
		\geq&\int\frac{\beta(\psi'-k)}{20|\beta-k|}\chi\zeta vH(v)-\int\frac{\beta^2(\psi'-k)^2}{400|\beta-k|}|\chi\zeta v|^2
\end{align*}
and note
\begin{align*}
		\int\frac{\beta(\psi'-k)}{20|\beta-k|}\chi^2\zeta^2 v\partial_y^2v=&-\int\frac{\beta(\psi'-k)}{20|\beta-k|}|\partial_y(\chi\zeta v)|^2+\int\frac{\beta\psi'''}{40|\beta-k|}|\chi\zeta v|^2\\
		=&-\int\frac{\beta(\beta-k)}{20|\beta-k|}|\partial_y(\chi\zeta v)|^2+\int\frac{\beta^2 e^{-y/2}}{640|\beta-k|}|\partial_y(\chi\zeta v)|^2\\
		&+\int\frac{\beta\psi'''}{40|\beta-k|}|\chi\zeta v|^2
\end{align*}
with
\begin{align*}
		&\int\frac{\beta(\psi'-k)}{20|\beta-k|}\chi^2\zeta^2 vav\\
		=&\int\frac{\beta(\psi'-k)^2(\psi'+k+n-2)}{20|\beta-k|}|\chi\zeta v|^2+\int\frac{\beta(\psi'-k)\psi''}{20|\beta-k|}|\chi\zeta v|^2.
\end{align*}
Combining \eqref{1.17}, we have that
\begin{align}\label{1.18}
	\begin{split}
		& \int \left|\chi\zeta( \partial_y^2v-\tilde{b}\partial_yv+\tilde{a}v)- e^{-2y}\partial_t(\chi\zeta v)- e^{-2y}\partial_\tau (\chi\zeta v)\right|^2\\
		\ge&\,7\int \psi'|\partial_y(\chi\zeta v)|^2+\frac{3}{2}\int (\psi')^2\psi''|\chi\zeta v|^2+\frac{1}{2}\int k^2\psi''|(\chi\zeta v)|^2\\
		&+\int\frac{\beta(\psi'-k)^2(\psi'+k+n-2)}{20|\beta-k|}|\chi\zeta v|^2+\frac{1}{4}\int|H(v)|^2.
	\end{split}
\end{align}
Thus, we can get the desire estimate if $\beta\geq\beta_1$.
\end{proof}

By Lemma \ref{lem2.1}, we have our main Carleman estimate.
\begin{lem}[Carleman estimate]\label{lem2.2}
	Let $\chi(t), \zeta(\tau)\in C_0^{2}(\mathbb R)$.
	There is a sufficiently large number $\beta_2$ depending on $n$ such that
	for all $w(t,\tau,x)\in C^1(\R^2;C^{\infty}({\mathbb R}^n))$ and
	$\beta\geq \beta_2$ with $\beta\in \mathbb{N}+\frac{1}{4}$, we have that
	\begin{eqnarray}\label{1.19}
		&&\iiint\varphi^2_\beta(1+\psi'')\chi^2\zeta^2 \LC |x|^{-n+2}|\nabla ( w)|^2+\beta^2|x|^{-n}| w|^2 \RC dxd\tau dt\notag\\
		&\lesssim& \iiint \varphi^2_\beta|x|^{-n+4}\chi^2\zeta^2\LC \Delta w-\partial_t w-\partial_\tau w\RC^2dxd\tau dt\notag\\
		&&+\iiint \varphi^2_\beta|x|^{-n+4}|\chi'\zeta w|^2\, dxd\tau dt+\iiint \varphi^2_\beta|x|^{-n+4}|\chi\zeta' w|^2\, dxd\tau dt,
	\end{eqnarray}
	where  ${\rm supp} (w(t,\tau,x))\subset \R\times (0,\infty)\times \left\{x: \, |x|<e \right\}$.
\end{lem}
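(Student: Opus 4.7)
The plan is to reduce Lemma \ref{lem2.2} to Lemma \ref{lem2.1} by passing to polar coordinates $y=-\log|x|$, $\omega=x/|x|$ and conjugating out the Carleman weight via the substitution $v=\varphi_{\beta}w$. I first record the polar identities $dx=|x|^{n}\,dy\,d\omega$, $|\nabla_{x}w|^{2}=e^{2y}\bigl(|\partial_{y}w|^{2}+\sum_{j}|\Omega_{j}w|^{2}\bigr)$ (which follows from $\sum_{j}\omega_{j}^{2}=1$ and $\sum_{j}\omega_{j}\Omega_{j}=0$), and the factorization $e^{-2y}\Delta=L^{+}L^{-}$ already recalled in the preliminaries. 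Setting $v(t,\tau,y,\omega):=\varphi_{\beta}(x)\,w(t,\tau,x)=e^{\psi(y)}w$ and invoking the intertwining $e^{\psi}L^{\pm}e^{-\psi}=L_{\beta}^{\pm}$, a direct computation using that $\chi$, $\zeta$, $\psi$ depend on disjoint variables yields
\begin{equation*}
L_{\beta}^{+}L_{\beta}^{-}v-e^{-2y}\partial_{t}v-e^{-2y}\partial_{\tau}v \;=\; \varphi_{\beta}\,e^{-2y}\bigl(\Delta w-\partial_{t}w-\partial_{\tau}w\bigr),
\end{equation*}
so that after applying Lemma \ref{lem2.1} to $v$ and multiplying the right-hand side integrands by $\varphi_{\beta}^{2}e^{-4y}=\varphi_{\beta}^{2}|x|^{4}$ and integrating against $dy\,d\omega=|x|^{-n}dx$, the three right-hand side integrals in Lemma \ref{lem2.1} become exactly the three right-hand side integrals of \eqref{1.19} with the correct weight $\varphi_{\beta}^{2}|x|^{-n+4}$.

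For the left-hand side, I use $|\Omega_{j}v|^{2}=\varphi_{\beta}^{2}|\Omega_{j}w|^{2}$, $|v|^{2}=\varphi_{\beta}^{2}|w|^{2}$, and $\partial_{y}v=\psi'v+\varphi_{\beta}\partial_{y}w$. The elementary bound $(a+b)^{2}\geq\tfrac{1}{2}b^{2}-a^{2}$ gives
\begin{equation*}
|\partial_{y}v|^{2}+(\psi')^{2}|v|^{2}\;\geq\;\tfrac{1}{2}\,\varphi_{\beta}^{2}|\partial_{y}w|^{2},
\end{equation*}
and the bound $\psi'\leq\beta$ from \eqref{1.1} lets me absorb the spurious $(\psi')^{2}|v|^{2}\sim\beta^{2}\varphi_{\beta}^{2}w^{2}$ into the $\beta^{2}\int(1+\psi'')|\chi\zeta v|^{2}$ term already present on the LHS of Lemma \ref{lem2.1}. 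Combined with the polar-form gradient identity and the measure conversion, the lower bound furnished by Lemma \ref{lem2.1} becomes
\begin{equation*}
\int(1+\psi'')\chi^{2}\zeta^{2}\varphi_{\beta}^{2}\bigl[\,|x|^{-n+2}|\nabla w|^{2}+\beta^{2}|x|^{-n}|w|^{2}\,\bigr]\,dx\,d\tau\,dt,
\end{equation*}
which is precisely the LHS of \eqref{1.19}.

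The main technical content has already been discharged in Lemma \ref{lem2.1}; the remaining work is essentially weighted bookkeeping. The one genuinely delicate point is the absorption step just described, which succeeds only because the Carleman lower bound in Lemma \ref{lem2.1} carries a factor $\beta^{2}$ on the zero-order term and because $\psi'\leq\beta$ by \eqref{1.1}. The support mismatch, namely $|x|<e$ in \eqref{1.19} versus $y>0$ (equivalently $|x|<1$) in Lemma \ref{lem2.1}, is minor and is handled either by repeating the proof of Lemma \ref{lem2.1} with the shifted weight $\psi(y+1)$, which satisfies the same convexity bounds \eqref{1.1} up to harmless constants, or by decomposing $w$ with a radial cutoff and estimating the annulus $1\leq|x|<e$ directly, where $\varphi_{\beta}$ is comparable to a constant.
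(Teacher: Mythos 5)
Your proof is essentially the argument the paper intends: the paper gives no proof of Lemma \ref{lem2.2} beyond the remark ``By Lemma \ref{lem2.1}, we have our main Carleman estimate,'' and your reconstruction supplies exactly the right chain. The identities $dx=|x|^{n}\,dy\,d\omega$, $e^{-2y}|\nabla w|^{2}=|\partial_{y}w|^{2}+\sum_{j}|\Omega_{j}w|^{2}$, and $L_{\beta}^{+}L_{\beta}^{-}v - e^{-2y}\partial_{t}v - e^{-2y}\partial_{\tau}v = \varphi_{\beta}e^{-2y}(\Delta w-\partial_{t}w-\partial_{\tau}w)$ with $v=\varphi_{\beta}w$ are all correct, and after multiplying by $|x|^{-n}$ and rewriting the three source integrals one obtains precisely the right-hand side of \eqref{1.19}. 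For the left-hand side, since $\chi,\zeta$ commute with $\partial_{y},\Omega$ and $\varphi_{\beta}$ is radial, the $\Omega$-term and the $\beta^{2}$-term match on the nose, and $\varphi_{\beta}^{2}|\partial_{y}w|^{2}=(\partial_{y}v-\psi'v)^{2}\le 2|\partial_{y}v|^{2}+2(\psi')^{2}|v|^{2}\le 2|\partial_{y}v|^{2}+2\beta^{2}|v|^{2}$ by $\psi'\le\beta$ from \eqref{1.1}; one should read your ``absorption'' step in this direction, i.e.\ as showing that the right-hand side of \eqref{1.5} controls the left-hand side of \eqref{1.19} up to a fixed factor like $3$, rather than as literally consuming the entire $\beta^{2}\int(1+\psi'')|\chi\zeta v|^{2}$ term.

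The one place worth flagging is your observation about supports: Lemma \ref{lem2.1} assumes $\supp v\subset\{y>0\}$, i.e.\ $|x|<1$, while Lemma \ref{lem2.2} as stated allows $|x|<e$, i.e.\ $y>-1$. This is a genuine (minor) discrepancy in the paper that it silently ignores. In the actual application inside the proof of Theorem \ref{Thm: UCP (new)} the cutoff $\theta$ is supported in $|x|<2R_{2}$ with $R_{2}<R_{0}/2$ and $R_{0}$ small, so the support lies well inside $\{y>0\}$ and the mismatch never bites. Either of your proposed fixes (re-deriving Lemma \ref{lem2.1} with the shifted weight $\psi(y+1)$, which satisfies the same bounds \eqref{1.1}--\eqref{1.2} up to adjusting constants, or isolating the harmless annulus $1\le|x|<e$ where $\varphi_{\beta}$ is bounded) is perfectly adequate, and you are right that this is a small bookkeeping issue rather than a substantive gap.
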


\subsection{Unique continuation property}\label{Sec: UCP}

This section is devoted to proving the unique continuation property of solutions to
\begin{eqnarray}\label{2.1}
	\partial_t u+\partial_\tau u-\Delta u=0.
\end{eqnarray}
The arguments are motivated by the proof of \cite[Theorem 15]{vessella2003carleman}.
\begin{thm}\label{Thm: UCP (new)}
	Let $u\in H^1(\R;H^1((0,\infty));H^2(\R ^n))$ be a nontrivial solution of \eqref{2.1}.
	Given $t_0, \tau_0, \hat{\tau}>0$ such that $t<T$ and $\tau_0<\hat{\tau}/2$.
	Assume that $u(t,\tau,x)=0$ in $\{(t,\tau,x): \|x\|<R_1, 0< \tau<\hat{\tau}, |t|<T\}$.
	Then $u(t,\tau,x)=0$ in $\{(t,\tau,x): x\in \mathbb{R}^n, 0< \tau<\hat{\tau}, |t|<T\}$.
\end{thm}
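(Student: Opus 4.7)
My plan is to propagate the vanishing of $u$ outward from the spatial ball $\{|x|<R_1\}$ via an iterative application of the Carleman estimate in Lemma \ref{lem2.2}, following the Vessella-type strategy in \cite{vessella2003carleman}. For the basic propagation step, I fix radii $R_1<R_2<R_3<e$ and choose a smooth spatial cutoff $\eta\in C^\infty_c(\R^n)$ with $\eta\equiv 1$ on $\{|x|\le R_2\}$ and $\supp\eta\subset\{|x|\le R_3\}$, together with temporal cutoffs $\chi(t),\zeta(\tau)\in C_0^2$ adapted to the intervals $(-T,T)$ and $(0,\hat\tau)$. Setting $w:=\eta u$ and using the equation \eqref{2.1}, the equation error collapses to a commutator
\[
\Delta w-\partial_t w-\partial_\tau w \;=\; 2\nabla\eta\cdot\nabla u+(\Delta\eta)u,
\]
supported in the annulus $A:=\{R_2<|x|<R_3\}$. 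Plugging $w$ into \eqref{1.19} produces a weighted lower bound with a $\beta^2$ gain for $|u|^2$ on the set $K:=\{R_1<|x|<R_2\}\cap\{\chi\zeta\equiv 1\}$, controlled against three RHS contributions: an annular equation-error term, a $\chi'$ cutoff-error term, and a $\zeta'$ cutoff-error term.

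The annular term is absorbed by a direct weight-comparison argument. Since $\varphi_\beta$ is radially decreasing, its maximum on $A$ equals $\varphi_\beta(R_2)$, which coincides with the minimum of $\varphi_\beta$ on $K$; after dividing out this common factor, the annular contribution is of order $\beta^{-2}$ and vanishes as $\beta\to\infty$. The $\chi'$ and $\zeta'$ terms are more delicate, since their spatial support can reach $|x|=R_1$ where the weight is largest, producing an unfavorable ratio $\varphi_\beta(R_1)/\varphi_\beta(R_2)$. I would handle them by (i) using that $u$, and hence $w$, vanishes on $\{|x|<R_1\}$ to restrict the spatial support of these integrands to $\{R_1\le|x|<R_3\}$; and (ii) taking $\chi,\zeta$ of Gevrey type, so that $|\chi'|^2/\chi^2$ and $|\zeta'|^2/\zeta^2$ are bounded uniformly and can be absorbed by the $\beta^2$ gain on the LHS (together with the factor $|x|^{-n}/|x|^{-n+4}=|x|^{-4}$, which is bounded below on $\supp\eta\subset\{|x|\le R_3\}$).

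Once all three RHS contributions are absorbed, sending $\beta\to\infty$ forces $u\equiv 0$ on $K$. Iterating through a chain $R_1<R_2<\cdots<e$ gives the vanishing of $u$ on any $\{|x|<R\}$ with $R<e$. Since the equation and the Carleman estimate are translation-invariant in $x$, recentering the weight at points on the boundary of this ball and repeating the argument propagates the vanishing further, eventually covering all of $\R^n$. A final limit argument in the temporal cutoff parameters recovers the full intervals $|t|<T$ and $0<\tau<\hat\tau$ claimed in the conclusion. The main technical hurdle throughout is the absorption of the $\chi',\zeta'$ cutoff errors, which is forced upon us precisely because the Carleman weight $\varphi_\beta$ depends only on $x$; getting a clean absorption from the $\beta^2$ gain, with cutoffs chosen carefully enough to balance the adverse weight ratio, is the crux of the argument.
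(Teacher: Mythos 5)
Your overall architecture follows the paper's Vessella-type scheme closely: spatial cutoff producing a commutator supported in an annulus, temporal and $\tau$-cutoffs, insertion into the Carleman estimate of Lemma~\ref{lem2.2}, weight comparison to kill the annular error, and iteration/recentring. However, your treatment of the $\chi',\zeta'$ error terms — which you yourself identify as the crux — does not work as stated, and it is precisely here that the paper's argument is more delicate.

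You claim that Gevrey-type cutoffs make $|\chi'|^2/\chi^2$ and $|\zeta'|^2/\zeta^2$ uniformly bounded, so that these terms ``can be absorbed by the $\beta^2$ gain on the LHS.'' This is false: for \emph{any} $C_0^2$ compactly supported cutoff, $\chi'/\chi$ blows up as one approaches the boundary of $\operatorname{supp}\chi$. For the Gevrey cutoff in \eqref{2.2} one has $\chi'/\chi = O\bigl((T_1-|t|)^{-4}\bigr)$, which is unbounded, so no finite $\beta$ can absorb it everywhere. Restricting support to $\{R_1\le |x|\}$, as you propose, does not help, because the temporal-cutoff derivatives live in the whole spatial annulus $\{|x|<R_3\}$ and the weight $\varphi_\beta$ can still be as large as $\varphi_\beta(R_1)$ there. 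What the paper actually does (following Vessella) is a \emph{dichotomy} in the size of $|\chi'/\chi|^2$: where $C|\chi'/\chi|^2\le \tfrac{\beta^3}{4}|x|^{-3}\le \tfrac{(1+\psi'')\beta^2}{4}|x|^{-4}$ the term is absorbed into the LHS as you describe; but where $C|\chi'/\chi|^2\ge \tfrac{\beta^3}{4}|x|^{-3}$, one instead solves this inequality (using the explicit form $|\chi'/\chi|\lesssim T^3/(T_1-|t|)^4$) to force $T_1-|t|\le t_0/4$, hence $|t|-T_2\ge (T_1-T_2)/2$, which via the Gevrey structure \eqref{2.2} gives the pointwise bound \eqref{2.10}, $\chi(t)\lesssim\exp\bigl(-\tfrac1{16}(\tfrac{\beta^3T^2}{4|x|^3})^{3/8}\bigr)$. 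This decay in $\beta^{9/8}$ beats the polynomial-in-$\beta$ growth of $\varphi_\beta^2$, so in this regime $\tilde J_1$ is uniformly bounded in $\beta$ with \emph{no} weight left over. That uniform bound is what makes the final division by $\beta^2\varphi_\beta^2(R_2)$ and $\beta\to\infty$ limit go through. Without this case split your scheme breaks at the absorption step; the rest of your proof (annular weight comparison, iteration, recentring, the final $t_0,\tau_0\to 0$ and $\hat\tau\to\infty$ limits) is sound and consistent with the paper.
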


\begin{proof}
Let $\chi$ be defined as
\begin{equation}\label{2.2}
	\chi (t)=
	\begin{cases}
			1,& |t|\leq T_2,\\
			0,&|t|\geq T_1,\\
			\exp\left(-(\frac{T}{T_1-|t|})^{3}(\frac{|t|-T_2}{T_1-T_2})^{4}\right),& T_2<|t|<T_1,
	\end{cases}
\end{equation}
where $T_1=T-\frac{t_0}{2}$, $T_2=T-t_0$.

Similarly, we define $\zeta$ as
\begin{equation}\label{2.3}
	\zeta (\tau)=
	\begin{cases}
			1,& |\tau-\hat{\tau}/2|\leq \tau_2,\\
			0,& |\tau-\hat{\tau}/2|\geq \tau_1,\\
			\exp\left(-(\frac{\hat{\tau}}{8(\tau_1-|\hat{\tau}/2|)})^{3}(\frac{|\hat{\tau}/2|-\tau_2}{\tau_1-\tau_2})^{4}\right),& \tau_2<|\tau-\hat{\tau}/2|<\tau_1,
	\end{cases}
\end{equation}
where $\tau_1=\frac{\hat{\tau}}{2}-\frac{\tau_0}{2}$, $\tau_2=\frac{\hat{\tau}}{2}-\tau_0$.

Moreover, we let $\theta(x)\in C^{\infty}_0
({\mathbb R}^n)$ satisfy $0\le\theta(x)\leq 1$ and
\begin{equation*}
	\theta (x)=
	\begin{cases}
			1, & |x|<R_2,\\
			0,& |x|>2R_2,
	\end{cases}
\end{equation*}
where $R_1<R_2<R_0/2$. It is easy to see that for any multiindex
$\alpha$
\begin{equation}\label{2.4}
	|D^{\alpha}\theta|=O(|x|^{|\alpha|})\quad \text{if}\quad R_2 <|x|< 2R_2.
\end{equation}
Applying \eqref{1.19} to $\theta u$ gives
\begin{align}\label{2.5}
	&\int\varphi^2_\beta(1+\psi'')\chi^2\zeta^2 |x|^{-n} (|x|^{2}|\nabla ( \theta u)|^2+\beta^2| \theta u|^2)\notag\\
	\lesssim& \int \varphi^2_\beta|x|^{-n+4}\chi^2\zeta^2(\Delta (\theta u)-\partial_t (\theta u )-\partial_\tau (\theta u ))^2\notag\\
	&+\int \varphi^2_\beta|x|^{-n+4}\theta^2\zeta^2|\chi' u|^2+\int \varphi^2_\beta|x|^{-n+4}\chi^2\theta^2|\zeta' u|^2,
\end{align}
Here and after, $C$ and $\tilde C$ denote general constants whose value may vary from line to line. The dependence of $C$ and $\tilde C$ will be specified whenever necessary.

By using \eqref{2.4} and \eqref{2.1}, we obtain that
\begin{align}\label{2.6}
	\begin{split}
		&\int_{W_{T,\hat{\tau}}}\varphi^2_\beta(1+\psi'')\chi^2\zeta^2 |x|^{-n} (|x|^{2}|\nabla  u|^2+\beta^2| u|^2)\\
		\lesssim&\int\varphi^2_\beta(1+\psi'')\chi^2\zeta^2 (|x|^{-n+2}|\nabla ( \theta u)|^2+\beta^2|x|^{-n}| \theta u|^2)\\
		\lesssim& \int \varphi^2_\beta|x|^{-n+4}\chi^2\zeta^2(\Delta (\theta u)-\partial_t (\theta u )-\partial_\tau (\theta u ))^2\\
		&+\int \varphi^2_\beta|x|^{-n+4}\theta^2\zeta^2|\chi' u|^2+\int \varphi^2_\beta|x|^{-n+4}\chi^2\theta^2|\zeta' u|^2\\
		\leq & \int_{\tilde{W}} \varphi^2_\beta|x|^{-n+4}\zeta^2|\chi' u|^2+\int_{\tilde{W}} \varphi^2_\beta|x|^{-n+4}\chi^2|\zeta' u|^2+\int_{\tilde{Y}}\varphi^2_\beta|x|^{-n}|\tilde{U}|^2,
	\end{split}
\end{align}
where $W_{T,\hat{\tau}}=\{(t,\tau,x):\, |t|<T,\, 0<\tau< \hat{\tau},\, |x|<R_2\}$,
$\tilde{W}=\{(t,\tau,x):\, |t|<T,\, 0<\tau< \hat{\tau},\, |x|<R_2\}$, $\tilde{Y}=\{(t,\tau,x):\, |t|<T,\, 0<\tau< \hat{\tau},\, R_2<|x|<2R_2\}$, and
$|\tilde{U}(x)|^2=|x|^{4}|\chi' u|^2+|x|^{4}|\zeta' u|^2+|x|^{-2}|\nabla u|^2+|x|^{-4}|u|^2$.
Here, the same terms on the right hand side of \eqref{2.6} are absorbed by the left hand
side of \eqref{2.6}.  With the choices described above, we obtain from \eqref{2.6} that
\begin{align}\label{2.7}
	\begin{split}
		&\int_{W_{T,\hat{\tau}}}\varphi^2_\beta(1+\psi'')\chi^2\zeta^2 |x|^{-n}\beta^2| u|^2\\
		\leq  &\tilde{J}_1+\tilde{J}_2+C\int_{\tilde{Y}}\varphi^2_\beta|x|^{-n}|\tilde{U}|^2,
	\end{split}
\end{align}
where $$
\begin{aligned}
	\tilde{J}_1&=C\int_{\tilde{W}} \varphi^2_\beta|x|^{-n+4}\zeta^2\left|\frac{\chi'}{\chi}\right|^2|\chi u|^2,\\
	\tilde{J}_2&=C\int_{\tilde{W}} \varphi^2_\beta|x|^{-n+4}\chi^2\left|\frac{\zeta'}{\zeta} \right|^2|\zeta u|^2.
\end{aligned}
$$
Notice that we define $\frac{\chi'}{\chi}=0$ as $\chi=0$. The arguments for estimating $\tilde{J}_1$ and $\tilde{J}_2$ are the same, so we only estimate $\tilde J_1$..
To do so, one only needs to consider the integral over $\tilde{W}_1=\{(t,\tau,x):\, T_2<|t|<T_1,\, 0<\tau< \hat{\tau},\, |x|<R_2\}$.
To this end, we consider the following two cases. Firstly,
$$C\left|\frac{\chi'}{\chi}\right|^2\leq \frac{\beta^3}{4}|x|^{-3}\leq \frac{(1+\psi'')\beta^2}{4}|x|^{-4}.$$
In this case, $\tilde{J}_1$ will be absorbed by the left hand side. Secondly, we consider
$$C\left|\frac{\chi'}{\chi}\right|^2\geq \frac{\beta^3}{4}|x|^{-3}.$$
Since
\begin{equation*}
	\sqrt{C}\left|\frac{\chi'}{\chi}\right|\leq C_1\frac{T^3}{(T_1-|t|)^4},
\end{equation*}
we can consider a large set
\begin{eqnarray}\label{2.8}
	C_1\frac{T^3}{(T_1-|t|)^4}\geq \LC\frac{\beta^3}{4|x|^{3}}\RC^{1/2}.
\end{eqnarray}
As a result, taking $\beta\geq \beta_3$ with $\beta_3=\left(\frac{C_1^24^9T^6}{t_0^8}\right)^{1/3}\geq\left(\frac{C_1^24^9R_2^3T^6}{t_0^8}\right)^{1/3}$,
we can get from \eqref{2.8} that
$$T_1-|t|\leq t_0/4$$
which implies that
\begin{eqnarray}\label{2.9}
	|t|-T_2\geq \frac{T_1-T_2}{2}.
\end{eqnarray}
Combining \eqref{2.1}, \eqref{2.8} and \eqref{2.9}, we get for $(t,x)\in \tilde{W}_1$ that
\begin{eqnarray}\label{2.10}
	\chi(t)\lesssim\exp\left(-\frac{1}{16}\LC\frac{\beta^3T^2}{4|x|^{3}}\RC^{3/8}\right).
\end{eqnarray}
Thus, we have from \eqref{2.10} and \eqref{2.1} to obtain for $\beta\geq \beta_3$ that
\begin{eqnarray}\label{2.11}
	\tilde{J}_1\leq C_2\int_{\tilde{W}}|u|^2,
\end{eqnarray}
where $C_2$ is a positive constant depending on $\lambda,n, T, t_0$.

Combining \eqref{2.7} and \eqref{2.11}, we get that
\begin{align}\label{2.12}
	\begin{split}
		&\beta^2(R_2)^{-n}\varphi^2_\beta(R_2)\int_{W_2}| u|^2\\
		\leq &\int_{W_{T,\hat{\tau}}}\varphi^2_\beta(1+\psi'')\chi^2\zeta^2 |x|^{-n}\beta^2|u|^2\\
		\lesssim &\int_{\tilde{W}}|u|^2+(R_2)^{-n}\varphi^2_\beta(R_2)\int_{\tilde{Y}}|\tilde{U}|^2,
	\end{split}
\end{align}
where $W_2=\{(t,\tau,x):\, |t|<T-t_0,\, \tau_0<\tau< \hat{\tau}-\tau_0,\, |x|<R_2\}$.

Dividing $\beta^2(R_2)^{-n}\varphi^2_\beta(R_2)$ on both sides of \eqref{2.11} and if $\beta\geq n$, we have
\begin{eqnarray}\label{2.13}
	&\int_{W_2}| u|^2&\lesssim \beta^{-2}(R_2)^{n}\varphi^{-2}_\beta(R_2)\int_{\tilde{W}}|u|^2+\beta^{-2}\int_{\tilde{Y}}|\tilde{U}|^2.
\end{eqnarray}
Let $\beta\rightarrow \infty$ on \eqref{2.13}, we get that $u=0$ on $W_2$.

Since $t_0$ and $\tau_0$ are arbitrary, we derive that $u=0$ on $\{(t,\tau,x):|t|<T,\, \tau< \hat{\tau},\, |x|<R_2\}$.
By the standard argument, we can obtain that $u=0$ on $\{(t,\tau,x):\, |t|<T,\, 0<\tau< \hat{\tau},\, x \in \mathbb{R}^n\}$.
Finally, since $\hat{\tau}$ can be arbitrary, we have that $u=0$ on $\{(t,\tau,x):\,  |t|< T ,\, \tau\in (0,\infty),\, |x|\in \mathbb{R}^n\}$.
\end{proof}

\begin{cor}\label{Cor: UCP}
	Given an open set $D \subset \R^n$ and $T>0$. Let $\mathcal{O}\subset D$ be a subset. Let $u$ be a solution of $\LC \p_t +\p_\tau -\Delta\RC u =0$ for $(x,t,\tau)\in D\times (-T,T)\times (0,\infty)$. If $u(x,t,\tau)=0$ in $\mathcal{O}\times (-T,T)\times (0,\infty)$, then $u=0$ in $ D\times (-T,T)\times (0,\infty)$.
\end{cor}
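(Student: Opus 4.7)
The plan is to reduce the corollary to an iterated application of a local version of Theorem \ref{Thm: UCP (new)}, combined with a connectedness argument on $D$. Inspecting the proof of Theorem \ref{Thm: UCP (new)} reveals that the Carleman estimate of Lemma \ref{lem2.2} is purely local: it is applied to $\theta u$ with the spatial cutoff $\theta\in C^\infty_c(\R^n)$ supported in $\{|x|<2R_2\}$, and the only information about $u$ that enters is the PDE on this support together with the vanishing hypothesis on a smaller ball $\{|x|<R_1\}$. Hence, after translation of the spatial origin to any point $x_0\in \R^n$, the same argument yields the following local UCP: if $u$ satisfies $(\p_t+\p_\tau-\Delta)u=0$ on $B_{2R_2}(x_0)\times(-T,T)\times(0,\infty)$ and $u\equiv 0$ on $B_{R_1}(x_0)\times(-T,T)\times(0,\infty)$ for some $0<R_1<R_2\le R_0/2$, then $u\equiv 0$ on $B_{R_2}(x_0)\times(-T,T)\times(0,\infty)$.

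With this local statement in hand, I would define
\[
V:=\{x_0\in D:\, u\equiv 0 \text{ on } U\times(-T,T)\times(0,\infty) \text{ for some open neighborhood } U\ni x_0\}.
\]
The set $V$ is open in $D$ by definition and is nonempty since $\mathcal{O}\subset V$. It is also relatively closed in $D$: given a limit point $x_\ast\in D$ of $V$, choose $R_2>0$ small enough that $\overline{B_{2R_2}(x_\ast)}\subset D$ and $R_2\le R_0/2$, then pick $x_0\in V$ close enough to $x_\ast$ so that some ball $B_{R_1}(x_0)$ lies in the interior of the vanishing set and $x_\ast\in B_{R_2}(x_0)$; the local UCP applied at $x_0$ gives vanishing on $B_{R_2}(x_0)$, so $x_\ast$ lies in the interior of the vanishing set, i.e., $x_\ast\in V$. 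Since $V$ is clopen in $D$, it contains every connected component of $D$ that meets $\mathcal{O}$; under the natural reading of the statement (namely that $\mathcal{O}$ meets every connected component of $D$, or equivalently that $D$ is connected), this gives $V=D$ and hence $u\equiv 0$ on $D\times(-T,T)\times(0,\infty)$.

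The main obstacle is not a deep analytic issue but rather the bookkeeping of extracting the local version of Theorem \ref{Thm: UCP (new)} from its proof. One needs to verify that neither the Carleman weight $\varphi_\beta$ nor the temporal/spatial cutoffs $\chi,\zeta,\theta$ depend on values of $u$ outside the support of $\theta$, so that the estimates go through unchanged after translation to any ball $B_{2R_2}(x_0)\subset D$ with $R_2\le R_0/2$. Once this is in place, the limiting argument $\beta\to\infty$ carried out exactly as at the end of the proof of Theorem \ref{Thm: UCP (new)} yields vanishing on $B_{R_2}(x_0)$, and the chain-of-balls/connectedness argument above completes the proof.
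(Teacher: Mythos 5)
Your proposal is correct and is essentially the intended argument: the paper states Corollary~\ref{Cor: UCP} without proof as an immediate localization of Theorem~\ref{Thm: UCP (new)}, whose Carleman-estimate step is manifestly local (the estimate of Lemma~\ref{lem2.2} is applied to $\theta u$ with $\theta$ compactly supported, and the ``standard argument'' at the end of that proof is precisely the chain-of-balls extension you make explicit). Your clopenness argument on $V$ and your caveat that $D$ should be connected (or $\mathcal{O}$ should meet every component of $D$) are both exactly right; the only cosmetic gaps are that the corollary's statement should implicitly read $\mathcal{O}$ as a nonempty \emph{open} subset and that the regularity tacitly needed to run the Carleman estimate is inherited from the hypotheses of Theorem~\ref{Thm: UCP (new)}, neither of which is a flaw in your reasoning.
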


\section{From the nonlocal to the local}\label{Sec 4}

In this section, let us discuss several useful materials and prove Theorem \ref{T1}.

\subsection{Auxiliary tools and regularity results}

\begin{lem}
	Consider the function 
	\begin{align}\label{V_j}
		\mathbf{V}_j(t,x):=\int_0^\infty \mathbf{U}_j(t,\tau,x)\, d\tau, 
	\end{align}
	then $\mathbf{V}_j$ is the solution of 
	\begin{align}\label{V_j equation}
		\begin{cases}
			\mathcal{H}_j\mathbf{V}_j=u_j &\text{ in }\LC \R^{n}\RC_T,\\
			\mathbf{V}_j(-T,x)=0 &\text{ on }\R^n,
		\end{cases}
	\end{align}
	where $u_j\in \mathbb{H}^s(\R^{n+1})$ is the solution of \eqref{nonlocal para sec 3}, for $j=1,2$.
\end{lem}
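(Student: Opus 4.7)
The plan is to obtain both conclusions by integrating the PDE of Lemma~\ref{Lem: Key} with respect to $\tau$ over $(0,\infty)$, and then justifying the formal manipulations with the energy bound of Lemma~\ref{Lem: energy estimate} and the heat-kernel decay \eqref{est-heat-kernel-sec2}. More concretely, starting from
\[
(\partial_t+\partial_\tau)\mathbf{U}_j(t,\tau,x)+\mathcal{L}_j\mathbf{U}_j(t,\tau,x)=0,
\]
I would integrate in $\tau$ from $0$ to $\infty$ and treat each term:
the first term becomes $\partial_t \mathbf{V}_j(t,x)$ after interchanging $\partial_t$ with the $\tau$-integral;
the second term telescopes via the fundamental theorem of calculus to $\mathbf{U}_j(t,\infty,x)-\mathbf{U}_j(t,0,x)=-u_j(t,x)$, using the pointwise decay $\mathbf{U}_j(t,\tau,x)\to 0$ as $\tau\to\infty$ (a direct consequence of the Gaussian upper bound in \eqref{est-heat-kernel-sec2} together with the finite-in-time support of $u_j$ for $t\leq -T$) and the initial trace $\mathbf{U}_j(t,0,x)=u_j(t,x)$;
the third term becomes $\mathcal{L}_j \mathbf{V}_j(t,x)$, again by commuting $\mathcal{L}_j$ with the $\tau$-integral. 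Summing produces $\partial_t \mathbf{V}_j+\mathcal{L}_j \mathbf{V}_j=u_j$, i.e.\ $\mathcal{H}_j \mathbf{V}_j=u_j$.

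For the initial condition, the third line of \eqref{U_j equation} gives $\mathbf{U}_j(-T,\tau,x)=0$ for every $\tau>0$, so directly
\[
\mathbf{V}_j(-T,x)=\int_0^\infty \mathbf{U}_j(-T,\tau,x)\,d\tau = 0.
\]

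The main obstacle is the rigorous justification of the differentiation under the integral sign (interchanging $\partial_t$ and $\mathcal{L}_j$ with $\int_0^\infty \cdot\, d\tau$) and the vanishing of the boundary term at $\tau=\infty$. For the differentiation-under-the-integral step I would invoke the energy estimate \eqref{energy 0}, which gives $\mathbf{U}_j,\,\nabla_x \mathbf{U}_j\in L^2((-T,T)\times(0,\infty)\times \R^n)$, combined with parabolic regularity applied pairwise in the variables $(t,\tau)$ (viewing $\partial_t+\partial_\tau$ as a directional derivative along the diagonal) to transfer regularity to $\mathbf{V}_j$. For the boundary vanishing at $\tau=\infty$, I would use \eqref{est-heat-kernel-sec2} to derive the decay
\[
|\mathbf{U}_j(t,\tau,x)|\leq C\tau^{-n/2}\|u_j(t-\tau,\cdot)\|_{L^1(\R^n)\cap L^\infty(\R^n)},
\]
or, more robustly, a weighted $L^2$-decay in $\tau$ sufficient to ensure both the pointwise limit and the absolute convergence of the defining integral \eqref{V_j}. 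Once these two technicalities are settled, the identification of the equation is a one-line telescoping computation and the zero initial condition is immediate.
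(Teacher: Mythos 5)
Your proposal is correct and follows essentially the same route as the paper's own proof: integrate the equation from Lemma~\ref{Lem: Key} over $\tau\in(0,\infty)$, telescope the $\partial_\tau$ term using $\mathbf{U}_j(t,0,x)=u_j(t,x)$ and the decay at $\tau=\infty$, and read off the zero initial condition from $u_j(\cdot,t)=0$ for $t\le -T$. The additional discussion of justifying the interchange via the energy estimate and the Gaussian kernel bound \eqref{est-heat-kernel-sec2} is sound, though the paper leaves these routine verifications implicit.
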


\begin{proof}
	The  following arguments hold for $j=1,2$. Integrating \eqref{U_j equation} with respect to the $\tau$-variable, one has 
	\begin{align}\label{comp 2}
		\begin{split}
			\LC \p_t +\mathcal{L}_j\RC \mathbf{V}_j(t,x)=&\int_{0}^\infty \LC \p_t +\mathcal{L}_j\RC \mathbf{U}_j(t,\tau ,x)\, d\tau\\=&-\int_{0}^\infty\p_\tau \LC \mathbf{U}_j(t,\tau ,x)\RC d\tau\\
			=&\mathbf{U}_j(t,0,x), \quad \text{ for }(t,x)\in \R^{n+1},
		\end{split}
	\end{align}
	for $j=1,2$.
	and plug the above relation into \eqref{comp 2}, so that \eqref{V_j equation} holds.
	Finally, one can check that 
	\begin{align*}
		\mathbf{V}_j(-T,x)=\int_0^\infty \int_{\R^n}p_j(x,z,\tau)u_j(-T-\tau,z)\,  dz  d\tau=0,
	\end{align*}
	for $t\in (-T,\infty)$, and $\tau \in (0,\infty)$, where we utilized that $u_j(x,t)=0$ for $t\leq -T$ and $x\in \R^n$ (or $u_j(z,-T-\tau)=0$ for $\tau \geq 0$ and $z\in \R^n$). This proves the assertion.
\end{proof}

	From the above derivation, it is not hard to see that 
	\begin{align}\label{past time V_j}
		\mathbf{V}_j(\zeta,x)=0, \text{ for all }\zeta\leq -T,
	\end{align}
   which will be used in the forthcoming discussion. We next analyze the regularity result of the solution $\mathbf{V}_j$.

   \begin{lem}[Regularity estimate]\label{Lem: regularity}
   	The function $\mathbf{V}_j$ given by \eqref{V_j} satisfies 
   	\begin{align*}
   	\begin{split}
   			 \LC \mathcal{H}_j\RC^{s/2} \mathbf{V}_j \in L^2(-T,T; H^{2}(\R^n))  \quad \text{ and } \quad  \p_t 		\LC \mathcal{H}_j\RC^{s/2} \mathbf{V}_j  \in   L^2 ((\R^n)_T),
   	\end{split}
   	\end{align*}
    where $H^{a}(\R^n)$ denotes the (fractional) Sobolev space of order $a\in \R$, for $j=1,2$.
   \end{lem}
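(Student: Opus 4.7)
The plan is to view $w := \mathcal{H}_j^{s/2}\mathbf{V}_j$ as the solution of a \emph{local} inhomogeneous parabolic equation and then to invoke standard $L^2$-parabolic regularity. Since $\mathbf{V}_j$ solves $\mathcal{H}_j\mathbf{V}_j = u_j$ with $\mathbf{V}_j \equiv 0$ for $t \le -T$ by \eqref{past time V_j}, and since $\mathcal{H}_j^{s/2}$ commutes with $\mathcal{H}_j$ in the spectral calculus of Section \ref{Sec 2}, $w$ satisfies
\begin{equation*}
\mathcal{H}_j w = \mathcal{H}_j^{s/2} u_j \quad\text{on } \R^{n+1}, \qquad w(t,x) = 0 \text{ for } t \le -T.
\end{equation*}
The vanishing of $w$ in the past follows from the causal structure of $\mathcal{H}_j^{s/2}$: by the Balakrishnan formula \eqref{H^s} together with \eqref{e-semni gp}, $\mathcal{H}_j^{s/2}\mathbf{V}_j(t,x)$ depends only on the values $\mathbf{V}_j(t-\tau,\cdot)$ with $\tau \ge 0$.

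Next I would verify that the forcing $F := \mathcal{H}_j^{s/2} u_j$ belongs to $L^2(\R^{n+1})$. Using the spectral representation $\mathcal{F}_t(\mathcal{H}_j^{s/2} u_j)(\rho,\cdot) = \int_0^\infty (\lambda + \mathbf{i}\rho)^{s/2}\,dE_\lambda \mathcal{F}_t u_j(\rho,\cdot)$ and Plancherel in $t$,
\begin{equation*}
\norm{\mathcal{H}_j^{s/2} u_j}_{L^2(\R^{n+1})}^2 = \int_\R \int_0^\infty |\lambda + \mathbf{i}\rho|^s \, d\norm{E_\lambda \mathcal{F}_t u_j(\rho,\cdot)}^2 d\rho \le \norm{u_j}_{\mathbf{H}^s(\R^{n+1})}^2,
\end{equation*}
and the right-hand side is finite by the well-posedness Proposition \ref{Prop: wellposed} together with the space identification \eqref{space identification}.

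Finally, with $F \in L^2(\R^{n+1})$ and the vanishing condition $w \equiv 0$ for $t \le -T$, I would apply standard maximal $L^2$-parabolic regularity for the uniformly elliptic operator $\mathcal{H}_j = \p_t - \nabla \cdot (\sigma_j \nabla)$ to conclude that $w \in L^2(-T,T; H^2(\R^n))$ and $\p_t w \in L^2((-T,T) \times \R^n)$, which are precisely the two claims of the lemma. The principal technical point is the second-order spatial regularity, since $\sigma_j$ is only Lipschitz rather than $C^{1,1}$; however, because $\sigma_j = \mathbf{I}_n$ in $\Omega_e$, the global $H^2$-bound splits into an interior estimate near $\overline{\Omega}$ (via Nirenberg's difference-quotient method applied to the divergence-form operator $\mathcal{L}_j$) and the classical constant-coefficient $H^2$-theory for the heat operator at spatial infinity, both of which are standard.
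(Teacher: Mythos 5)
Your proposal is correct and follows essentially the same route as the paper: reformulate $w=\mathcal{H}_j^{s/2}\mathbf{V}_j$ as the solution of the local Cauchy problem $\mathcal{H}_j w=\mathcal{H}_j^{s/2}u_j$ with zero past data, observe that the right-hand side lies in $L^2(\R^{n+1})$ by the spectral calculus, and then upgrade to $H^2$ in space using the fact that $\sigma_j=\mathbf{I}_n$ on $\Omega_e$ together with the identity $\norm{\Delta w}_{L^2(\R^n)}=\norm{D^2w}_{L^2(\R^n)}$. The only cosmetic difference is that you invoke maximal $L^2$-parabolic regularity as a packaged theorem, whereas the paper derives the bounds on $\p_t w$ and $\mathcal{L}_j w$ by elementary energy estimates (squaring the equation and testing with $\p_t w$); your final paragraph already supplies the same patching of a Lipschitz-coefficient interior estimate with the constant-coefficient exterior estimate, so no substantive gap remains.
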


\begin{proof}
	 Since $u_j\in \mathbb{H}^s(\R^{n+1})$ is the solution of \eqref{nonlocal para sec 3}, it is not hard to check that 
	\[
	\wt u_j:= \LC \mathcal{H}_j\RC^{s/2} u_j \in L^2(\R^{n+1}),
	\]
	for $j=1,2$.
	Here we used the known result that $\LC \mathcal{H}_j\RC^{s/2}: \mathbb{H}^s(\R^{n+1})\to L^2(\R^{n+1})$ by observing the Fourier symbol in the definition \eqref{space H} of the function space $\mathbb{H}^s(\R^{n+1})$.
	
	Let us consider the function 
	$$\widetilde{\mathbf{V}}_j:= \LC \mathcal{H}_j\RC^{s/2}\mathbf{V}_j
	$$ 
	where $\mathbf{V}_j$ satisfies \eqref{V_j equation}. Note that  
	\begin{align*}
		\widetilde{\mathbf{V}}_j(-T,x)= &\LC \LC \mathcal{H}_j\RC^{s/2}\mathbf{V}_j \RC (-T,x) \\
		=&-\frac{s/2}{\Gamma(1-s/2)}\int_0^\infty \frac{\mathcal{P}^{\mathcal{H}_j}_\tau \mathbf{V}_j(-T,x)-\mathbf{V}_j(-T,x)}{\tau^{1+s/2}}\, d\tau \\
		=&0, 
	\end{align*}
for $x\in \R^n$, where we used \eqref{past time V_j}. Then $\widetilde{\mathbf{V}}_j$ is the solution of 
	\begin{align}\label{wt V_j equation}
		\begin{cases}
			\mathcal{H}_j\widetilde{\mathbf{V}}_j=\wt u_j &\text{ in }\R^n_T,\\
			\widetilde{\mathbf{V}}_j(-T,x)=0 &\text{ on }\R^n,
		\end{cases}
	\end{align}
    where we used the interchangeable property between $\LC \mathcal{H}_j\RC^{s/2}$ and $\mathcal{H}_j$, for $j=1,2$.
	Meanwhile, we can have the following computations in $-T\leq t\leq T$:
	\begin{align}\label{comp V_j in R^n}
		\begin{split}
			\int_{\R^n} \wt u_j^2 \, dx=& \int_{\R^n} \LC \p_t \widetilde{\mathbf{V}}_j-\mathcal{L}_j \widetilde{\mathbf{V}}_j \RC^2  dx \\
			=& \int_{\R^n} \left[ \LC \p_t\widetilde{\mathbf{V}}_j\RC^2 -2\mathcal{L}_j \widetilde{\mathbf{V}}_j\cdot \p_t \widetilde{\mathbf{V}}_j+\LC\mathcal{L}_j\widetilde{\mathbf{V}}_j \RC^2 \right] dx \\
			=& \int_{\R^n}  \left[ \LC \p_t\widetilde{\mathbf{V}}_j\RC^2 +2\sigma_j\nabla \widetilde{\mathbf{V}}_j \cdot \p_t \nabla \widetilde{\mathbf{V}}_j +\LC\mathcal{L}_j\widetilde{\mathbf{V}}_j \RC^2 \right] dx,
		\end{split}
	\end{align}
	for $j=1,2$, where we have used the integration by parts in the last equality. Moreover, $2\sigma_j\nabla \mathbf{V}_j \cdot \p_t \nabla \widetilde{\mathbf{V}}_j =\frac{d}{dt}\LC \sigma_j \nabla \widetilde{\mathbf{V}}_j \cdot \nabla \widetilde{\mathbf{V}}_j \RC$, and 
	\[
	\int_{-T}^\zeta \int_{\R^n}2\sigma_j\nabla \widetilde{\mathbf{V}}_j \cdot \p_t \nabla \widetilde{\mathbf{V}}_j \, dxdt =\left. \int_{\R^n} \sigma_j \nabla \widetilde{\mathbf{V}}_j \cdot \nabla \widetilde{\mathbf{V}}_j\, dx\right|_{t=-T}^{t=\zeta},
	\]
	for $\zeta \in [-T,T]$ and $j=1,2$. Hence, integrate \eqref{comp V_j in R^n} with respect to the time-variable, then one can show that 
	\begin{align}\label{comp V_j in L^2}
		\begin{split}
			&c_0\max_{-T\leq \zeta\leq T}\int_{\R^n}|\nabla \widetilde{\mathbf{V}}_j|^2 \, dx+\int_{-T}^T\int_{\R^n}\LC\p_t \widetilde{\mathbf{V}}_j\RC^2 \, dxdt +\int_{-T}^T\int_{\R^n}\LC \mathcal{L}_j\widetilde{\mathbf{V}}_j\RC^2 \, dxdt \\
			\leq &\max_{-T\leq \zeta\leq T}\int_{\R^n}\sigma_j\nabla \widetilde{\mathbf{V}}_j\cdot \nabla \widetilde{\mathbf{V}}_j\, dx+\int_{-T}^T\int_{\R^n}\LC\p_t \widetilde{\mathbf{V}}_j\RC^2 \, dxdt +\int_{-T}^T\int_{\R^n}\LC \mathcal{L}_j\widetilde{\mathbf{V}}_j\RC^2 \, dxdt \\
			=&2\int_{-T}^T\int_{\R^n}\wt u_j^2 \, dxdt,
		\end{split}
	\end{align}
	for $j=1,2$, where we used the ellipticity condition \eqref{ellipticity condition} for $\sigma$ and $\wt u_j\in  L^2(\R^{n+1})$. 
	The inequality \eqref{comp V_j in L^2} shows that for a.e. $t\in [-T,T]$, 
	\begin{align}\label{L2 relations}
		\nabla \widetilde{\mathbf{V}}_j(t,\cdot), \quad \p_t \widetilde{\mathbf{V}}_j(t, \cdot )\quad \text{ and }\quad \mathcal{L}_j\widetilde{\mathbf{V}}_j(t,\cdot)\in L^2(\R^n).
	\end{align}

Let us denote $\widetilde{\mathbf{V}}_j':=\frac{\p}{\p t}\widetilde{\mathbf{V}}_j$, multiplying \eqref{wt V_j equation} by $\widetilde{\mathbf{V}}_j'$, then the integration by parts yields that 
	\begin{align*}
		\int_{\R^n} |\widetilde{\mathbf{V}}_j'|^2 \, dx+ \frac{1}{2}\frac{\p}{\p t}\int_{\R^n}\sigma(x)\nabla \widetilde{\mathbf{V}}_j\cdot \nabla \widetilde{\mathbf{V}}_j\, dx =\int_{\R^n}\wt u_j \widetilde{\mathbf{V}}_j'\, dx.
	\end{align*}
Applying the Young's  inequality and integrating with respect to the $t$-variable in the above identity, for any $\varepsilon>0$, we have 
\begin{align*}
&	\int_{-T}^T \norm{\widetilde{\mathbf{V}}_j'}_{L^2(\R^n)}^2\, dt +\frac{1}{2}\sup_{-T\leq t \leq T}\norm{\nabla \widetilde{\mathbf{V}}_j}_{L^2(\R^n)}^2 \\
	\leq &\varepsilon \int_{-T}^T \norm{\widetilde{\mathbf{V}}_j'}_{L^2(\R^n)}^2\, dt +C(\varepsilon)\int_{-T}^T \norm{\wt u_j}_{L^2(\R^n)}^2\, dt.
\end{align*}
In addition, by choosing $\varepsilon>0$ sufficiently small, one can absorb the first term from the right to the left so that 
\begin{align*}
	\int_{-T}^T \norm{\widetilde{\mathbf{V}}_j'}_{L^2(\R^n)}^2\, dt +\sup_{-T\leq t \leq T}\norm{\nabla \widetilde{\mathbf{V}}_j}_{L^2(\R^n)}^2 \leq C\norm{\wt u_j}_{L^2(\R^n)}^2,
\end{align*}
for some constant $C>0$ independent of $\mathbf{V}_j$ and $\wt u_j$, for $j=1,2$.

On the other hand, one can write the equation \eqref{wt V_j equation} in terms of the weak formulation so that 
\begin{align*}
	\int_{\R^n} \widetilde{\mathbf{V}}_j' \phi \, dx + \int_{\R^n}\sigma_j \nabla \widetilde{\mathbf{V}}_j\cdot \nabla \phi \, dx= \int_{\R^n} \wt u_j \phi \, dx,
\end{align*}
for any $\phi=\phi(x)\in H^1(\R^n)$. The above identity is equivalent to 
\begin{align*}
 \int_{\R^n}\sigma \nabla \widetilde{\mathbf{V}}_j\cdot \nabla \phi \, dx= \int_{\R^n} F \phi \, dx,
\end{align*}
for any $\phi=\phi(x)\in H^1(\R^n)$, where $F(t,\cdot):=\wt u_j (t,\cdot )- \widetilde{\mathbf{V}}_j'(t,\cdot)\in L^2(\R^n)$ for a.e. $t\in [-T,T]$. This shows that $\widetilde{\mathbf{V}}_j(t, \cdot)\in H^1(\R^n)$ is a weak solution of $-\nabla \cdot (\sigma_j \nabla \widetilde{\mathbf{V}}_j)=F$ in $\R^n$ for a.e. $t\in [-T,T]$. 
Moreover, the classical interior estimate shows that $\widetilde{\mathbf{V}}_j \in H^2_{\mathrm{loc}}(\R^n)$. In particular, there exists a ball $B_R$ containing $\Omega$, such that $\widetilde{\mathbf{V}}_j  \in H^2(B_R)$. We also observe that 
\begin{align*}
	\Delta\widetilde{\mathbf{V}}_j= \left. \Delta\widetilde{\mathbf{V}}_j \right|_{\Omega}+\left. \Delta\widetilde{\mathbf{V}}_j \right|_{\Omega_e}= \left. \Delta\widetilde{\mathbf{V}}_j \right|_{\Omega}+\left. \nabla \cdot (\sigma \nabla \widetilde{\mathbf{V}}_j) \right|_{\Omega_e} \in L^2 (\R^n),
\end{align*}
where we used $\sigma=\mathbf{I}_n$ in $\Omega_e$ so that the first term in the right hand side of the above identity vanishes in the set $\Omega_e$, and $\widetilde{\mathbf{V}}_j \in H^2(\Omega)\subset H^2(B_R)$. Therefore, due to the fact that $\norm{\Delta \widetilde{\mathbf{V}}_j }_{L^2(\R^n)}^2 =\norm{D^2 \widetilde{\mathbf{V}}_j }_{L^2(\R^n)}^2$, we can show that $\widetilde{\mathbf{V}}_j \in H^2(\R^n)$. In addition, the $H^2$ estimate is independent of $t\in [-T,T]$. Finally, via the definition of $\widetilde{\mathbf{V}}_j$. This proves the assertion.
\end{proof}

\begin{rmk}\label{Rmk: regularity}
	Via Lemma \ref{Lem: regularity}, it is not hard to check that
	\begin{align}\label{V_j regularity}
		\LC \mathcal{H}_j \RC^{s/2} \mathbf{V}_j \in \mathbb{H}^{1,2}(\R^{n+1}),
	\end{align}
	where one can extend $ \LC \mathcal{H}_j \RC^s \mathbf{V}_j $ to be zero for $(t,x)\in \{t> T\} \times \R^n$\footnote{The space $\mathbb{H}^{1,2}(\R^{n+1})$ is introduced in Section \ref{Sec 2}, and the information in the future time domain will not affect the solution in $\Omega_T$.} without loss of generality.
	The preceding lemma will give us desired function spaces for our Cauchy data, which will be utilized in the proof of Theorem \ref{T1}.
\end{rmk}

Let us state the unique continuation principle for the nonlocal parabolic equation \eqref{nonlocal para}, which was shown in \cite[Proposition 5.6]{LLR2019calder} by using suitable Carleman estimate.

\begin{prop}\label{Prop:exterior_UCP}
	Given $s\in (0,1)$, $n\in \N$ and arbitrarily nonempty open sets $W_1, W_2\subset \Omega_e $. Let $u_j \in \mathbb{H}^s(\R^{n+1})$ with $\mathrm{supp}(u_j)\subset \LC \overline{\Omega}\cup \overline{W_1}\RC_T$, for $j=1,2$. Suppose that 
	\begin{align}
		\label{eq:overdet}
		u_1=u_2 \in C^\infty_c\LC\LC W_1\RC _T\RC  \quad  \text{ and }  \quad  \LC \mathcal{H}_1\RC^s u_1 =\LC \mathcal{H}_2\RC^s u_2  \text{ in } \LC W_2\RC_T.
	\end{align} 
	Then $\mathbf{U}_1=\mathbf{U}_2$ in $ (-T,T)\times (0,\infty)\times \R^n$, where $\mathbf{U}_j$ is defined by \eqref{conv-heat-def}, for $j=1,2$.
\end{prop}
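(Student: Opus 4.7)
The plan is to reduce the proposition to the unique continuation principle in Corollary \ref{Cor: UCP} applied to the difference $\mathbf{W}:=\mathbf{U}_1-\mathbf{U}_2$. By Lemma \ref{Lem: Key} and the standing assumption $\sigma_1=\sigma_2=\mathbf{I}_n$ on $\Omega_e$, the function $\mathbf{W}$ satisfies the \emph{local} heat-type equation
\begin{align*}
 (\partial_t+\partial_\tau-\Delta)\mathbf{W}=0\quad\text{on }(-T,T)\times(0,\infty)\times\Omega_e,
\end{align*}
together with the initial condition $\mathbf{W}(-T,\tau,x)=0$. Furthermore, the support hypothesis $\supp u_j\subset(\overline{\Omega}\cup\overline{W_1})_T$ combined with $u_1=u_2\in C^\infty_c((W_1)_T)$ forces $u_1=u_2$ on all of $(\Omega_e)_T$ (both equal $f$ on $W_1$ and both vanish on $\Omega_e\setminus\overline{W_1}$), so we obtain the Dirichlet-type boundary condition $\mathbf{W}(t,0,x)=0$ on $(\Omega_e)_T$.

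To convert the second hypothesis into information on $\mathbf{W}$, I would insert the Balakrishnan representation \eqref{H^s} into the identity $(\mathcal{H}_1)^s u_1=(\mathcal{H}_2)^s u_2$ on $(W_2)_T$. Since $u_1=u_2$ on $W_2\subset\Omega_e$, the pointwise terms cancel and one arrives at the weighted moment condition
\begin{align*}
 \int_0^\infty \tau^{-1-s}\,\mathbf{W}(t,\tau,x)\,d\tau=0,\qquad(t,x)\in(W_2)_T.
\end{align*}
Thus on the boundary slice $(-T,T)\times\{0\}\times W_2$, both the trace of $\mathbf{W}$ and its weighted fractional moment in $\tau$ vanish; together they play the role of overdetermined Cauchy-type data at $\tau=0$ for the local equation displayed above.

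The crux of the proof—and, I expect, its hardest part—is to promote this pair of boundary vanishing conditions to genuine interior vanishing of $\mathbf{W}$ on a cylinder $B\times(0,\hat\tau)\times(-T,T)$, with $B\subset W_2$ a small ball and $\hat\tau>0$. The mechanism is the Carleman estimate of Lemma \ref{lem2.2}: choose cut-offs $\chi(t)$ and $\zeta(\tau)$ concentrated near a point $(t_\ast,0^+,x_\ast)$ with $(t_\ast,x_\ast)\in(W_2)_T$. The boundary contributions coming from $\zeta'(\tau)$ near $\tau=0$ on the right-hand side of \eqref{1.19} are precisely where the vanishing trace $\mathbf{W}(t,0,x)=0$ and the weighted moment condition are deployed to absorb boundary integrals; the remaining interior terms are then absorbed by the large-$\beta$ limit. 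This step is the parabolic analogue of the boundary unique continuation for $(-\Delta)^s$ used in the elliptic fractional Calder\'on problem, and it is exactly what the tailor-made Carleman machinery developed in Section \ref{Sec: Carleman} was engineered to supply.

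Once the initial vanishing on $B\times(0,\hat\tau)\times(-T,T)$ is in hand, Corollary \ref{Cor: UCP} applied with $D=\Omega_e$ and $\mathcal{O}=B$ immediately propagates it to $\mathbf{W}\equiv 0$ on $(-T,T)\times(0,\infty)\times\Omega_e$. To upgrade this to equality on all of $\R^n$, I would pass to the integrated function $\mathbf{V}_j=\int_0^\infty\mathbf{U}_j\,d\tau$ defined in \eqref{V_j}, which by \eqref{V_j equation} satisfies the local parabolic initial-value problem $\mathcal{H}_j\mathbf{V}_j=u_j$ on $(\R^n)_T$ with $\mathbf{V}_j(-T,\cdot)=0$; the vanishing of $\mathbf{W}$ in the exterior produces matching Cauchy data for $\mathbf{V}_1-\mathbf{V}_2$ across $\partial\Omega$, and a Holmgren-type uniqueness argument propagates this equality inward, from which $\mathbf{U}_1=\mathbf{U}_2$ on the whole space then follows by the uniqueness part of Corollary \ref{Cor: unique solution} applied to the auxiliary Cauchy problem \eqref{U_j equation}.
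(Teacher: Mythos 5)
Your set-up agrees with the paper through the first two observations: $\mathbf W := \mathbf U_1-\mathbf U_2$ satisfies $(\p_t+\p_\tau-\Delta)\mathbf W=0$ in $(-T,T)\times(0,\infty)\times\Omega_e$, the support hypothesis forces $\mathbf W(t,0,\cdot)=0$ on $(\Omega_e)_T$, and plugging the Balakrishnan formula into $(\mathcal H_1)^s u_1=(\mathcal H_2)^s u_2$ on $(W_2)_T$ gives the single weighted moment condition $\int_0^\infty \tau^{-1-s}\mathbf W(t,\tau,x)\,d\tau=0$ for $(t,x)\in(W_2)_T$, which is exactly the paper's identity \eqref{zero cond of U_j}.

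The gap is in the step you flag as the crux. You propose to feed this one moment condition plus the vanishing trace at $\tau=0$ into the Carleman estimate of Lemma~\ref{lem2.2} with a $\tau$-cutoff $\zeta$ concentrated near $\tau=0^+$, and to use these two pieces of data to absorb the $\zeta'$ error term and deduce pointwise vanishing of $\mathbf W$ on a slab $B\times(0,\hat\tau)$. This does not work. The estimate \eqref{1.19} has a weight $\varphi_\beta=\varphi_\beta(|x|)$ singular at $x=0$ and no $\tau$-weight at all; it is engineered for unique continuation across spheres $\{|x|=R\}$, which is precisely its role in Theorem~\ref{Thm: UCP (new)} and Corollary~\ref{Cor: UCP}. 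The $\zeta'$ term is an interior cutoff error, not a boundary term: it involves the full $L^2$-norm of $\mathbf W$ on a neighborhood $\tau\in(0,\eps)$, which is not controlled by either the trace $\mathbf W(t,0,\cdot)=0$ or by a single scalar condition $\int_0^\infty \tau^{-1-s}\mathbf W\,d\tau=0$. The paper instead bootstraps the $N=1$ moment identity into the full family $\int_0^\infty \tau^{-N-s}\mathbf U\,d\tau=0$ for every $N\in\N$ on $(\mathcal O_2)_T$, by repeatedly applying the heat operator, converting $\mathcal H$ to $-\p_\tau$ via the equation $(\p_\tau+\mathcal H)\mathcal H^m\mathbf U=0$, and integrating by parts in $\tau$; the $L^1$-convergence of all these moments hinges on the heat-kernel Gaussian decay together with the geometric separation $\ol{\mathcal O_1}\cap\ol{\mathcal O_2}=\emptyset$ (which yields $|x-z|\geq\kappa>0$). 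Those infinitely many vanishing moments are then summed against $(\mathbf i\xi/\tau)^k/k!$ to show that the Fourier transform (in $\alpha=1/\tau$) of $\alpha^{s-1}\mathbf U(t,\alpha^{-1},x)$ vanishes identically, whence $\mathbf U=0$ on $(-T,T)\times(0,\infty)\times\mathcal O_2$. Only after that local vanishing does the Carleman UCP of Corollary~\ref{Cor: UCP} come into play. So the novel analytical work in this proposition is the moment/Fourier argument, which your write-up never supplies and the Carleman lemma cannot replace.

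A further issue is your final clause, where you claim $\mathbf U_1=\mathbf U_2$ on all of $\R^n$ via a Holmgren-type argument and Corollary~\ref{Cor: unique solution}. Those tools cannot be applied here: inside $\Omega$ the two functions solve different PDEs ($\mathcal L_1\neq\mathcal L_2$), so matching Cauchy data on $\partial\Omega$ does not force equality in $\Omega$, and Corollary~\ref{Cor: unique solution} is uniqueness for a single operator $\mathcal H_j$, not a comparison of the two. Indeed the paper's own proof only establishes and only uses $\mathbf U_1=\mathbf U_2$ in $(-T,T)\times(0,\infty)\times\Omega_e$, which is what Part~1 of the proof of Theorem~\ref{T1} requires; the ``$\R^n$'' in the proposition's statement reads like an overstatement.
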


Recalling the nonlocal parabolic operator $\mathcal{H}^s$ is defined via \eqref{H^s}, with the condition \eqref{eq:overdet} at hand, one has that 
\begin{align}\label{zero cond of U_j}
\begin{split}
\int_0^\infty  \frac{ \mathbf{U}_1(t,\tau,x) -\mathbf{U}_2(t,\tau,x)}{\tau^{1+s}}\, d\tau=0, \quad \text{ for }(t,x)\in \LC W_2\RC_T, 
\end{split}
\end{align}
where $\mathbf{U}_j$ is given by \eqref{conv-heat-def}, for $j=1,2$. By utilizing the condition \eqref{zero cond of U_j}, we can prove the proposition.

\begin{proof}[Proof of Proposition \ref{Prop:exterior_UCP}]
	Inspired by the proof of \cite[Proposition 3.1]{GU2021calder}, let us consider bounded open set $\mathcal{O}_j \Subset W_j\subset \Omega_e$ ($j=1,2$) such that $\overline{\mathcal{O}_1}\cap \overline{\mathcal{O}_2}=\emptyset$. Without loss of generality, we may assume that $\mathrm{supp}(u_j)\subset \LC \overline{\Omega}\cup \overline{\mathcal{O}_1}\RC _T$, for $j=1,2$. Consider 
	$$
	\mathbf{U}:=\mathbf{U}_1-\mathbf{U}_2,
	$$ 
	then one can have 
	\begin{align}
		\begin{split}
			\mathbf{U}(t,\tau,x) = &\mathbf{U}_1(t,\tau,x) - \mathbf{U}_2(t,\tau,x)  \\
			=& \int_{\Omega \cup \mathcal{O}_1} p_1(x,z,\tau)u_1(t-\tau,z)\, dz-\int_{\Omega \cup \mathcal{O}_1} p_2(x,z,\tau)u_2(t-\tau,z)\, dz,
		\end{split}
	\end{align}
where we have utilized the condition $\mathrm{supp}(u_j)\subset \LC\overline{\Omega} \cup \overline{\mathcal{O}_1}\RC_T$ and $p_j(x,z,\tau)$ is the corresponding heat kernel of $\p_\tau+\mathcal{L}_j$, for $j=1,2$. Moreover, it is known that heat kernels $p_j(x,z,\tau)$ satisfies 
\begin{align}\label{est-heat-kernel}
	C_1 \LC \frac{1}{4\pi \tau}\RC^{n/2}e^{-\frac{c_1|x-z|^2}{4\tau}} \leq p_j(x,z,\tau)\leq C_2 \LC \frac{1}{4\pi \tau}\RC^{n/2}e^{-\frac{c_2|x-z|^2}{4\tau}},
\end{align}
for $j=1,2$, for some positive constants $c_1,c_2,C_1$ and $C_2$.

\medskip

{\it Claim 1. $\dfrac{\mathbf{U}(t,\tau,x)}{\tau^{N+s}}\in L^1(0,\infty)$, for all $N\in \N$, and for any given $(t,x)\in \LC \mathcal{O}_2 \RC_T$. }

\medskip

\noindent  In order to show the claim, one can examine whether the integral $\int_0^\infty \left| \frac{\mathbf{U}(t,\tau,x)}{\tau^{N+s}}\right|\,  d\tau$ is finite or not. Similar to the arguments as in the proof of \cite[Proposition 3.1]{GU2021calder}, given $\delta \in (0,1)$, we can divide the integral 
\begin{align*}
	\int_0^\infty \left| \frac{\mathbf{U}(t,\tau,x)}{\tau^{N+s}}\right|\,  d\tau =I_\delta + II_\delta,
\end{align*}
where 
\begin{align*}
	I_\delta:=\int_0^\delta \left| \frac{\mathbf{U}(t,\tau ,x)}{\tau^{N+s}}\right|\,  d\tau \quad \text{ and }\quad 	 II_\delta:=\int_\delta^\infty \left| \frac{\mathbf{U}(t,\tau,x)}{\tau^{N+s}}\right|\,  d\tau. 
\end{align*}
For $II_\delta$, by using the H\"older's inequality, one can see that 
\begin{align}\label{II_delta}
	\begin{split}
		II_\delta \leq C \LC \norm{u_1}_{L^2(\R^{n+1})}+\norm{u_2}_{L^2(\R^{n+1})}\RC\LC \int_{\delta}^\infty \frac{1}{\tau^{2N+2s}}\, d\tau \RC^{\frac{1}{2}}<\infty,
	\end{split}
\end{align}
for some constant $C>0$ independent of $\tau>0$.
On the other hand, for $I_\delta$, using the H\"older's inequality and the property of the heat kernel estimate \eqref{est-heat-kernel}, we have that 
\begin{align}\label{I_delta}
\begin{split}
		I_{\delta} \leq & C \LC \norm{u_1}_{L^2(\R^{n+1})}+\norm{u_2}_{L^2(\R^{n+1})}\RC \LC \int_0^\delta \int_{\Omega \cup \mathcal{O}_1} \frac{e^{-\frac{|x-z|^2}{\tau}}}{\tau^{2N+2s}}\, dz d\tau \RC^{1/2} \\
		\leq & \wt C \LC \int_0^\delta \frac{e^{-\frac{\kappa^2}{\tau}}}{\tau^{2N+2s}}\, d\tau \RC^{1/2}<\infty , 
\end{split}
\end{align}
for some constants $C, \wt C>0$. Here we have used that $\Omega$ and $\mathcal{O}_1$ are bounded sets in $\R^n$, and $x\in \mathcal{O}_2$, such that $|x-z|\geq \kappa>0$,  for some $\kappa>0$ (recalling that $z\in \Omega \cup \mathcal{O}_2$ and $\overline{\Omega \cup \mathcal{O}_2}\cap \overline{\mathcal{O}_1}=\emptyset$). Combining with \eqref{II_delta} and \eqref{I_delta}, one can conclude that $\frac{\mathbf{U}(t,\tau,x)}{\tau^{N+s}}\in L^1(0,\infty)$ for all $N\in \N$, and for any given $(x,t)\in \LC \mathcal{O}_2 \RC_T$.

\medskip

{\it Claim 2. $\displaystyle\int_0^\infty \frac{\mathbf{U}(t,\tau ,x)}{\tau^{N+s}}\, d\tau =0$, for all $N\in \N$, and for any given $(t,x)\in \LC \mathcal{O}_2 \RC_T$.}

\medskip

\noindent With the equation \eqref{U_j equation} at hand, notice that 
$$\left. \mathcal{H}_1\right|_{(\Omega_e)_T}=\left. \mathcal{H}_2\right|_{(\Omega_e)_T}=\left. \LC \p_t -\Delta\RC \right|_{(\Omega_e)_T}:=\mathcal{H}|_{(\Omega_e)_T}, 
$$ 
then one can see that $\mathbf{U}$ is a solution of 
\begin{align}\label{exterior heat}
	\begin{cases}
		\p_\tau \mathbf{U}=-\mathcal{H}\mathbf{U}  &\text{ in }(-T,T)\times (0,\infty) \times \Omega_e, \\
		\mathbf{U}(t,0,x)=0 & \text{ for }(t,x) \in \LC\Omega_e\RC_T,
	\end{cases}
\end{align}
where we utilized the condition that $\mathbf{U}(t,0,x)=\mathbf{U}_1(t,0,x)-\mathbf{U}_2(t,0,x)=u_1(t,x)-u_2(t,x)=0$ in $\LC \Omega_e \RC_T$. Via the condition \eqref{zero cond of U_j}, the function $\mathbf{U}$ satisfies 
\begin{align}\label{N=1}
\int_0^\infty \frac{\mathbf{U}(t,\tau,x)}{\tau^{1+s}}\, d\tau =0 \text{ in }  \LC\mathcal{O}_2\RC_T,
\end{align}
which proves the {\it Claim 2} for the case $N=1$.

Furthermore, since $u_j(t,x)$ are $C^\infty$-smooth for $(t,x)\in \LC \Omega_e\RC_T$, and $p_j(x,z,\tau)$ is also smooth, for $j=1,2$, then we get $\mathbf{U}( t ,\tau,x)$ is smooth in the $(t,x)$-variables, for $(t,x)\in \LC\Omega_e\RC_T$. Hence, by applying the heat operator $\mathcal{H}^m$ to the equation \eqref{exterior heat} for any $m\in \N \cup \{0\}$, one has that 
\begin{align}\label{exterior heat-high}
	\begin{cases}
		\LC  \p_\tau +\mathcal{H}\RC \mathcal{H}^m\mathbf{U}=0 &\text{ in }(-T,T) \times (0,\infty) \times \Omega_e,\\
		\mathcal{H}^m\mathbf{U}(t,0,x)=0 &\text{ in }\LC \Omega_e\RC_T.
	\end{cases}
\end{align}
Meanwhile, similar to the arguments as in the {\it Claim 1}, we can show that 
\begin{align*}
	\frac{\mathcal{H}^m\mathbf{U}(t,\tau,x)}{\tau^{1+s}} \in L^1(0,\infty), \text{ for }(t,x)\in  \LC \mathcal{O}_2\RC_T, \text{ and for any }m\in \N\cup\{0\}.
\end{align*}
For the case $m=N+1$, $N\in \N$, by acting $\mathcal{H}^{N+1}$ on \eqref{N=1}, we obtain that 
\begin{align*}%\label{m=N+1}
	\int_0^\infty \frac{\mathcal{H}\LC\mathcal{H}^{N}\mathbf{U}(t,\tau,x)\RC}{\tau^{1+s}}\, d\tau=\mathcal{H}^{N+1}\LC \int_0^\infty \frac{\mathbf{U}(t,\tau,x)}{\tau^{1+s}}\, d\tau \RC =0 \text{ in }  \LC\mathcal{O}_2\RC_T,
\end{align*}
which is equivalent to 
\begin{align}\label{m=N+1 tau}
	\int_0^\infty \frac{\p_\tau \LC \mathcal{H}^{N}\mathbf{U}(t,\tau,x)\RC}{\tau^{1+s}}\, d\tau=0 \text{ in }  \LC\mathcal{O}_2\RC_T,
\end{align}
where we used the equation \eqref{exterior heat-high}.

Now, an integration by parts for \eqref{m=N+1 tau} yields that 
\begin{align}\label{m=N+1 tau int by p}
	\begin{split}
		0=&\left[\frac{  \mathcal{H}^{N}\mathbf{U}(t,\tau,x)}{\tau^{1+s}}\right]_{\tau=0}^{\tau=\infty}-\int_0^\infty  \mathcal{H}^{N}\mathbf{U}(t,\tau ,x) \p_\tau\LC \frac{ 1}{\tau^{1+s}}\RC\, d\tau\\
		=&(1+s)\int_0^\infty \frac{ \mathcal{H}^{N}\mathbf{U}(t,\tau,x)}{\tau^{2+s}}\, d\tau \quad \text{ in }  \LC\mathcal{O}_2\RC_T,
	\end{split}
\end{align}
where we used that $\left[\frac{  \mathcal{H}^{N}\mathbf{U}(t,\tau,x)}{\tau^{1+s}}\right]_{\tau=0}^{\tau=\infty}=0$.
As a result, by repeating the preceding arguments for $m=N-1,N-2,\ldots, 1$, with \eqref{m=N+1 tau int by p} at hand, we can conclude that 
\begin{align}\label{m=N+1 final}
	\int_0^\infty \frac{\mathbf{U}(t,\tau,x)}{\tau^{N+s}}\, d\tau=0 \text{ in }  \LC\mathcal{O}_2\RC_T,
\end{align}
and this proves the claim.

With \eqref{m=N+1 final} at hand, for any $\xi \in \R$, since $\frac{\mathbf{U}(t,\tau,x)}{\tau^{1+s}}\in L^1(0,\infty)$ for $(t,x)\in   \LC \mathcal{O}_2 \RC_T$, then $\int_0^\infty \frac{\mathbf{U}(t,\tau,x)}{\tau^{1+s}}e^{\frac{\mathbf{i}\xi}{\tau}}\, d\tau$, for $(t,x)\in \LC \mathcal{O}_2 \RC_T$ exists. Moreover, by using \eqref{m=N+1 final} again, one can  obtain that 
\begin{align}\label{vanish int}
	\int_0^\infty \frac{\mathbf{U}(t,\tau,x)}{\tau^{1+s}}e^{\frac{\mathbf{i}\xi}{\tau}}\, d\tau =	\int_0^\infty \frac{\mathbf{U}(t,\tau,x)}{\tau^{1+s}} \LC \sum_{k=0}^\infty \frac{1}{k!}\frac{(\mathbf{i}\xi)^k}{\tau^k} \RC d\tau=0,
\end{align}
for any $\xi \in \R$ and $(t,x)\in \LC \mathcal{O}_2\RC_T$.
Hence, by using the change of variable $\tau=\alpha^{-1}$, the integral \eqref{vanish int} is equivalent to 
\begin{align}\label{vanish int 1}
 \int_{0}^\infty \frac{\mathbf{U}(t,\alpha^{-1},x)}{\alpha^{1-s}}e^{\mathbf{i}\xi \alpha}\, d\alpha=0, \quad \text{ for all }\xi \in \R,
\end{align}
which can be regarded as the one-dimensional Fourier transform with respect to the $\alpha$-variable (here we can extend the function $\mathbf{U}(t,\alpha^{-1},x)=0$ for $\alpha<0$). Therefore, \eqref{vanish int 1} implies that 
\begin{align}\label{zero U in smaller domain}
	\mathbf{U}(t,\tau,x)=0, \text{ for } (t,\tau,x)\in \LC -T,T\RC\times (0,\infty) \times \mathcal{O}_2
\end{align}
as we wish.
Finally, by using the (weak) unique continuation of \eqref{exterior heat} (Section \ref{Sec: UCP}), we can show that $\mathbf{U}=0$ in $\LC\Omega_e\RC_T\times (0,\infty)$, which is equivalent to 
\begin{align}\label{bf U_1=U_2}
	\mathbf{U}_1=\mathbf{U}_2 \text{ in }\LC -T,T\RC\times (0,\infty) \times \Omega_e.
\end{align} 
This proves the assertion.	
\end{proof}

Moreover, we can show the global unique continuation property for $\mathcal{H}^s$.

\begin{lem}[Global unique continuation property]\label{Prop:global_UCP}
Let $u\in \mathbb{H}^s(\R^{n+1})$, and suppose that $u=\mathcal{H}^su=0$ in $\mathcal{O}_T$, where $\mathcal{O}\subset \R^n$ is an arbitrarily open set. Then we have $u\equiv  0$ in $(\R^n)_T$. 
\end{lem}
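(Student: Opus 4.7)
\medskip

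\noindent\textbf{Proof proposal.} The plan is to mirror the strategy of Proposition \ref{Prop:exterior_UCP}, but applied to the single function $u$ rather than to a difference of two solutions. First, I would set
\[
\mathbf{U}(t,\tau,x) := \mathcal{P}_\tau^{\mathcal{H}}u(t,x) = \int_{\R^n} p(x,z,\tau)\,u(t-\tau,z)\,dz,
\]
so that by Lemma \ref{Lem: Key}, $\mathbf{U}$ solves $(\p_t+\p_\tau+\mathcal{L})\mathbf{U}=0$ in $(-T,T)\times(0,\infty)\times\R^n$ with $\mathbf{U}(t,0,x)=u(t,x)$. The hypothesis $u=0$ on $\mathcal{O}_T$ immediately gives $\mathbf{U}(t,0,x)=0$ there. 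Combining this with the Balakrishnan formula \eqref{H^s} and $\mathcal{H}^s u=0$ on $\mathcal{O}_T$ yields
\[
\int_0^\infty \frac{\mathbf{U}(t,\tau,x)}{\tau^{1+s}}\,d\tau = 0, \qquad (t,x)\in\mathcal{O}_T,
\]
since the $u(t,x)$-subtraction in \eqref{H^s} vanishes on $\mathcal{O}_T$.

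Next I would iterate this moment vanishing exactly as in Claim 2 of Proposition \ref{Prop:exterior_UCP}: because $u\equiv 0$ on $\mathcal{O}_T$, standard local regularity makes $\mathbf{U}$ smooth in $(t,x)$ on $\mathcal{O}_T$, and the equation $\p_\tau\mathbf{U}=-\mathcal{H}\mathbf{U}$ allows one to apply $\mathcal{H}^{N}$ to the integral identity, followed by integration by parts in $\tau$, to obtain
\[
\int_0^\infty \frac{\mathbf{U}(t,\tau,x)}{\tau^{N+s}}\,d\tau = 0, \qquad (t,x)\in\mathcal{O}_T,\quad N\in\N.
\]
The integrability $\mathbf{U}/\tau^{N+s}\in L^1(0,\infty)$ is justified exactly as in Claim 1 there, using the two-sided heat kernel bound \eqref{est-heat-kernel-sec2}. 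Then, expanding $e^{\mathbf{i}\xi/\tau}=\sum_{k\geq 0}(\mathbf{i}\xi)^k/(k!\tau^k)$ and performing the change of variables $\alpha=1/\tau$, one gets
\[
\int_0^\infty \frac{\mathbf{U}(t,\alpha^{-1},x)}{\alpha^{1-s}}\,e^{\mathbf{i}\xi\alpha}\,d\alpha=0, \qquad \xi\in\R,
\]
so that Fourier injectivity forces $\mathbf{U}(t,\tau,x)=0$ for every $(t,x)\in\mathcal{O}_T$ and $\tau>0$.

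Finally, I would invoke the unique continuation property of the new equation to spread this vanishing globally. Where $\mathcal{O}\cap\Omega_e\neq\emptyset$ the coefficient matrix is the identity, so Theorem \ref{Thm: UCP (new)} together with Corollary \ref{Cor: UCP} propagates $\mathbf{U}\equiv 0$ throughout $(-T,T)\times(0,\infty)\times\Omega_e$; to push this across $\Omega$ one performs the change of variables $s=t+\tau$, $r=t-\tau$, under which $(\p_t+\p_\tau+\mathcal{L})\mathbf{U}=0$ becomes a standard parabolic equation $2\p_s\mathbf{U}+\mathcal{L}\mathbf{U}=0$ with $r$ as a parameter, and the classical parabolic UCP for variable Lipschitz coefficients then gives $\mathbf{U}\equiv 0$ on all of $(-T,T)\times(0,\infty)\times\R^n$. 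Passing to the limit $\tau\to 0^+$ and using Proposition \ref{Prop: extension}(1) yields $u=0$ on $(\R^n)_T$.

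The main obstacle, as in Proposition \ref{Prop:exterior_UCP}, is the rigorous justification of the iteration step: both the integrability of $\mathbf{U}/\tau^{N+s}$ and the commutation of $\mathcal{H}^N$ with the improper $\tau$-integral must be verified carefully using the Gaussian decay \eqref{est-heat-kernel-sec2} together with the smoothness of $u$ on $\mathcal{O}_T$. A secondary technical point is the UCP transfer across $\overline{\Omega}$: Theorem \ref{Thm: UCP (new)} is proved only for the heat-type extended equation, so extending the argument to cover the variable-coefficient region requires either the parabolic reformulation described above or an independent Carleman estimate analogous to Lemma \ref{lem2.2} but incorporating the matrix $\sigma(x)$.
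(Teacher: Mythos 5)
Your proposal takes a genuinely different route from the paper: the paper's own proof of this lemma is a one-line citation of \cite[Theorem 1.3]{BKS2022calderon}, which proceeds via the Caffarelli--Silvestre-type extension of Proposition \ref{Prop: extension} and a unique continuation principle for the associated degenerate parabolic equation in the extension variable $y$, whereas you try to redo the argument by reconstructing the moment-vanishing machinery of Proposition \ref{Prop:exterior_UCP}. That route cannot reach the generality of the statement. The lemma allows $\mathcal{O}$ to be any open subset of $\R^n$, in particular $\mathcal{O}\subset\Omega$, where $\sigma$ is merely Lipschitz. There the iterated-moment step breaks down in two distinct ways: first, the heat kernel $p(x,z,\tau)$ is not $C^\infty$ in $x$ when the coefficients are only Lipschitz (it is only locally H\"older), so $\mathbf{U}(\cdot,\tau,\cdot)$ is not smooth on $\mathcal{O}_T$ and the powers $\mathcal{H}^N\mathbf{U}$ cannot be formed; second, $\mathcal{H}^{N}$ for $N\geq 2$ involves differentiating $\sigma$ more than once, which is not even classically meaningful for Lipschitz $\sigma$. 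In Proposition \ref{Prop:exterior_UCP} these problems are invisible precisely because the observation set $\mathcal{O}_2\subset\Omega_e$, where $\mathcal{H}=\p_t-\Delta$ is constant-coefficient. Your Claims 1--2 therefore only go through for $\mathcal{O}\subset\Omega_e$, not for arbitrary $\mathcal{O}$.

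Even with that restriction, the spreading step leans on a unique continuation result for $\p_t+\p_\tau+\mathcal{L}$ with variable Lipschitz $\sigma$, which is not what Theorem \ref{Thm: UCP (new)} and Corollary \ref{Cor: UCP} provide: the Carleman estimate of Lemma \ref{lem2.2} is proved only for $\p_t+\p_\tau-\Delta$. Your proposed fix via the change of variables $s'=t+\tau$, $r'=t-\tau$ and a classical Lipschitz-coefficient parabolic UCP is plausible, but it is an external input not supplied anywhere in the paper, and the sheared domain in $(s',r')$ would need to be matched against the hypotheses of whatever classical theorem is invoked. Lastly, the final limit step should not cite Proposition \ref{Prop: extension}(1), which concerns $y\to 0^+$ in the extension problem; the right reference is the initial condition $\mathbf{U}(t,0,x)=u(t,x)$ from Lemma \ref{Lem: Key}.
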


\begin{proof}
	The proof has been demonstrated by \cite[Theorem 1.3]{BKS2022calderon}, whenever the leading coefficient $\sigma$ is globally Lipschitz continuous on $\R^n$. The arguments are based on suitable unique continuation properties for degenerate parabolic equations (Proposition \ref{Prop: extension}), and we refer readers to the detailed explanation in \cite{BKS2022calderon}.
\end{proof}

\begin{rmk}\label{Rmk: AdjUCP}
	With the preceding lemma at hand, it is not hard to see that the global unique continuation property also holds for the adjoint parabolic operator $\mathcal{H}^s_\ast$. In other words, given a nonempty open set $\mathcal{O}\subset \R^n$, if $v=\mathcal{H}^s_\ast v=0$ in $\mathcal{O}_T$, then $v\equiv 0$ in $\LC \R^n\RC_T$ as well. The proof can be achieved by repeating the arguments from Proposition \ref{Prop:exterior_UCP}, where one replaces the parabolic operator $\mathcal{H}=\p_t +\mathcal{L}$ by $\mathcal{H}_\ast=-\p_t +\mathcal{L}$.
\end{rmk}

\subsection{Proof of Theorem \ref{T1}}

We divide the proof of Theorem \ref{T1} into two parts.
\begin{proof}[Proof of Theorem \ref{T1}--Part 1]

Recalling that $\mathbf{V}_j=\mathbf{V}_j(t,x)$ is the function defined by \eqref{V_j}, via Proposition \ref{Prop:exterior_UCP}, one has 
\begin{align}
	\mathbf{V}_1 =\int_0^\infty \mathbf{U}_1(t,\tau ,x) \, d\tau =\int_0 ^\infty \mathbf{U}_2(t,\tau ,x)\, d\tau =\mathbf{V}_2 \text{ in }(\Omega_e)_T,
\end{align}
and $\mathbf{V}_j$ satisfies \eqref{V_j equation}, for $j=1,2$.

Define the function
\begin{align}\label{W_j}
	\mathbf{W}_j := \LC \mathcal{H}_j \RC^s \mathbf{V}_j \text{ in }\LC \R^n\RC_T, \text{ for }j=1,2.
\end{align}
We observe that 
\begin{align}\label{evolutive V_j=0}
	\mathcal{P}^{\mathcal{H}_j}_\tau \mathbf{V}_j(-T,x)=\int_{\R^n}p_j (x,z,\tau ) \mathbf{V}_j (-T-\tau,x)\, dz=0,
\end{align}
for $j=1,2$ and for all $\tau \in (0,\infty)$, where we have utilized that \eqref{past time V_j}.
Combining with the definition of $\LC \mathcal{H}_j\RC^s$, \eqref{V_j equation} and \eqref{evolutive V_j=0}, one has that 
\begin{align}\label{W_j initial}
\begin{split}
		\mathbf{W}_j(-T,x)= & \LC \LC \mathcal{H}_j \RC^s \mathbf{V}_j\RC(-T,x) \\
	=& -\frac{s}{\Gamma(1-s)}\int_0^\infty \frac{\mathcal{P}^{\mathcal{H}_j}_\tau \mathbf{V}_j(-T,x)-\mathbf{V}_j(-T,x)}{\tau^{1+s}}\, d\tau \\
	=&0, 
\end{split}
\end{align}
for $x \in \R^n$. Moreover, by interchanging the local and nonlocal parabolic operators, one has that 
\begin{align}\label{interchanging}
	\mathcal{H}_j  \LC  \LC \mathcal{H}_j \RC^s  \mathbf{V}_j \RC = \LC \mathcal{H}_j \RC^s \LC  \mathcal{H}_j \mathbf{V}_j\RC \text{ in }\LC \R^n\RC_T.
\end{align}
Acting $\mathcal{H}_j$ on \eqref{V_j equation}, by using \eqref{W_j initial} and  \eqref{interchanging}, one obtains that
\begin{align}\label{W_j equation}
	\begin{cases}
		\mathcal{H}_j \mathbf{W}_j = \LC  \mathcal{H}_j\RC^s u_j &\text{ in }\LC \R^n\RC_T,\\
		\mathbf{W}_j(-T,x)=0 & \text{ in } \R^n.
	\end{cases}
\end{align}
By applying the condition \eqref{V_j regularity}, we have 
\begin{align}\label{W_regularity}
	\mathbf{W}_j\in \mathbb{H}^{1-s/2,2-s}(\R^{n+1})\, (=\mathbb{H}^{2-s}(\R^{n+1})),
\end{align}
so that
\begin{align}\label{W_regularity 2}
	\mathbf{W}_j\in L^2(0,T;H^1(\R^n)) 
\end{align} 
due to $s \in (0,1)$, for $j=1,2$.
On the other hand, recalling that $u_j$ satisfies \eqref{nonlocal para sec 3}, we have 
\begin{align*}%\label{W_j equation =0 in Omega_T}
	\mathcal{H}_j \mathbf{W}_j = 0 \text{ in } \Omega_T, \text{ for }j=1,2.
\end{align*}

We next claim that 
\begin{align}\label{claim W1=W2}
	\mathbf{W}_1=\mathbf{W}_2 \text{ in }(\Omega_e)_T.
\end{align}
In order to show \eqref{claim W1=W2}, we consider another function 
\begin{align}\label{Ubb_j definition}
	\mathbb{U}_j(t,\tau,x):= \int_{\R^n} p_j(x,z,\tau) \mathbf{V}_j(t-\tau,z)\, dz,
\end{align}
as in Lemma \ref{Lem: Key}, $\mathbb{U}_j$ solves 
	\begin{align}\label{Ubb_j equation}
	\begin{cases}
		\p_\tau  \mathbb{U}_j(t,\tau,x)+\mathcal{H}_j \mathbb{U}_j(t,\tau,x)=0, & \text{ for }(t,\tau,x)\in \R \times (0,\infty) \times \R^n, \\
		\mathbb{U}_j(t,0,x)=\mathbf{V}_j(t,x) &\text{ for }(t,x)\in \R^{n+1},
	\end{cases}
\end{align}
for $j=1,2$. Now, by acting the parabolic operator $\mathcal{H}_j$ on both sides of \eqref{Ubb_j equation}, we can get
\begin{align}\label{tilde Ubb_j equation}
	\begin{cases}
		\p_\tau \widetilde{\mathbb{U}}_j + \mathcal{H}_j \widetilde{\mathbb{U}}_j=0  & \text{ for }(x,t,\tau)\in \R^{n+1}\times (0,\infty), \\
		\widetilde{\mathbb{U}}_j (t,0,x)=u_j(t,x) &\text{ for }(t,x)\in \R^{n+1},
	\end{cases}
\end{align}
where $\widetilde{\mathbb{U}}_j:=\mathcal{H}_j \mathbb{U}_j$ and we used the equations \eqref{V_j equation} and \eqref{Ubb_j equation} in the second equality of \eqref{tilde Ubb_j equation}, for $j=1,2$. More precisely, from \eqref{V_j equation} and \eqref{Ubb_j equation}, we have that 
\begin{align*}
	\begin{split}
		\widetilde{\mathbb{U}}_j(t,0,x)=\mathcal{H}_j \mathbb{U}_j (t,0,x)=\mathcal{H}_j\mathbf{V}_j(t,x)=u_j(t,x),
	\end{split}
\end{align*}
for $j=1,2$.
Furthermore, by \eqref{Ubb_j definition}, it is known that 
\begin{align}\label{Ubb_j=0}
	\begin{split}
		\mathbb{U}_j(-T,\tau,x)= \int_{\R^n} p_j(x,z,\tau) \mathbf{V}_j(-T-\tau,z)\, dz=0, 
	\end{split}
\end{align}
for all $\tau \in (0,\infty)$, where we used the condition \eqref{past time V_j}. Via the definition of $\widetilde{\mathbb{U}}_j$, \eqref{tilde Ubb_j equation}, and \eqref{Ubb_j=0}, one has that 
\begin{align}\label{tilde U_bb -T=0}
 \begin{split}
 	\widetilde{\mathbb{U}}_j(-T,\tau,x)=\mathcal{H}_j \mathbb{U}_j(-T,\tau,x)
 	=-\p_\tau \mathbb{U}_j(-T,\tau,x)=0.
 \end{split}
\end{align}

By Corollary \ref{Cor: unique solution}, combining with the condition \eqref{tilde U_bb -T=0}, the equation \eqref{tilde Ubb_j equation} possesses a unique solution. Now, comparing the equations \eqref{U_j equation} and \eqref{tilde Ubb_j equation}, they both have the same initial condition 
$$\mathbf{U}_j(t,0,x)=u_j(x,t)=\widetilde{\mathbb{U}}_j(t,0,x) \text{ for } (t,x)\in \R^{n+1},
$$ 
which yields that 
\begin{align}\label{unique of several U_j}
	\mathbf{U}_j(t,\tau,x)=\widetilde{\mathbb{U}}_j(t,\tau,x)=\mathcal{H}_j \mathbb{U}_j, \text{ for } (t,\tau,x)\in \R\times  (0,\infty)\times \R^{n},
\end{align}
for $j=1,2$.
Thus, by using \eqref{Ubb_j equation} and \eqref{unique of several U_j}, we have 
\begin{align}\label{id 1}
	\p_\tau \mathbb{U}_1 - \p_\tau \mathbb{U}_2 = -\mathcal{H}_1 \mathbb{U}_1 +\mathcal{H}_2 \mathbb{U}_2=-\mathbf{U}_1 +\mathbf{U}_2 =0 \text{ in }\LC -T,T\RC\times (0,\infty) \times \Omega_e,
\end{align}
where the last equality holds due to the identity \eqref{bf U_1=U_2}.

In addition, thanks to the identity \eqref{id 1}, we know that 
\[
\LC \mathbb{U}_1 -\mathbb{U}_2\RC(t,\tau,x)=\LC \mathbb{U}_1 -\mathbb{U}_2\RC(t,0,x),  \text{ for }(t,\tau,x)\in \LC -T,T\RC\times (0,\infty)\times \Omega_e,
\]
which is equivalent to 
\begin{align}\label{id 2}
\mathbb{U}_1  (t,\tau,x)- \mathbb{U}_1 (t,0,x)= \mathbb{U}_2(t,\tau ,x)-\mathbb{U}_2(t,0,x) ,  
\end{align}
for $(t,\tau,x)\in \LC-T, T\RC\times (0,\infty) \times  \Omega_e$.
Consequently, \eqref{id 2} implies that 
\begin{align*}
	\int_0^\infty \frac{\mathbb{U}_1(t,\tau,x)-\mathbb{U}_1 (t,0,x)}{\tau^{1+s}}\, d\tau =	\int_0^\infty \frac{\mathbb{U}_2(t,\tau,x)-\mathbb{U}_2(t,0,x)}{\tau^{1+s}}\, d\tau, 
\end{align*}
for $(t,x)\in \LC\Omega_e\RC_T$. Meanwhile, via the definition \eqref{H^s} of nonlocal parabolic operators, the above identity gives rise to 
\begin{align}\label{id 3}
	\LC\mathcal{H}_1\RC^s \mathbf{V}_1=\LC \mathcal{H}_2\RC^s \mathbf{V}_2 \text{ in }\LC \Omega_e \RC_T.
\end{align}
Recall that the function $\mathbf{W}_j$ is defined by \eqref{W_j} for $j=1,2$, then \eqref{id 3} infers that the claim \eqref{claim W1=W2} holds.

Hence, combining with \eqref{W_regularity 2}, $\mathbf{W}_j\in L^2(-T,T ;H^1(\R^n))$ satisfies 
\begin{align*}
	\begin{cases}
		\mathcal{H}_j \mathbf{W}_j=0 &\text{ in }\Omega_T, \\
		\left\{  \mathbf{W}_1 , \, \sigma_1\p_{\nu} \mathbf{W}_1 \right\}=\left\{ \mathbf{W}_2 , \, \sigma_2\p_{\nu} \mathbf{W}_2  \right\}&\text{ on }\Sigma_T,
	\end{cases}
\end{align*}
where $\sigma_j\p_\nu \mathbf{W}_j$ denotes the Neumann data on $\Sigma_T$ given by \eqref{conormal}, for $j=1,2$. Moreover, by the trace theorem, it is known that 
$$
\left\{  \mathbf{W}_1 , \, \sigma_1\p_{\nu} \mathbf{W}_1 \right\} \in L^2(0,T;H^{1/2}(\Sigma)) \times L^2(0,T;H^{-1/2}(\Sigma)). 
$$
Finally, it remains to show that whether we can vary all possible Dirichlet data so that we are able to reduce nonlocal inverse problems to local ones, and the rest of the arguments will be given in next section.
\end{proof}

%\subsection{Calder\'on's problem for local parabolic equations}

We want to show that the lateral boundary Cauchy data 
$$
\mathcal{C}^{(j)}_{\Sigma_T}=\left\{ \left. \mathbf{V}_j \right|_{\Sigma_T}, \, \left. \sigma_j \p_{\nu} \mathbf{V}_j \right|_{\Sigma_T} \right\},
$$ 
where $\mathbf{V}_j$ is a solution of the initial-boundary value problem
\begin{align*}
	\begin{cases}
		\mathcal{H}_j\mathbf{V}_j=0 &\text{ in }\Omega_T,\\
		\mathbf{V}_j=f &\text{ on }\Sigma_T, \\
		\mathbf{V}_j(-T,x)=0 &\text{ for }x\in \Omega,
	\end{cases}
\end{align*}
for $j=1,2$. Our aim is to prove 
\begin{align}\label{same local Cauchy}
	\mathcal{C}^{(1)}_{\Sigma_T}=\mathcal{C}^{(2)}_{\Sigma_T}.
\end{align}

We first demonstrate a connection between local and nonlocal Calder\'on problems. Adopting all notations in previous sections, we further define two solution spaces that 
\begin{align*}
	\mathcal{D}_j(\Omega_T):= \left\{ \left.\mathbf{V}_j\right|_{\Omega_T}: \, \begin{cases}
		\mathcal{H}_j \mathbf{V}_j=0 &\text{ in }\Omega_T,\\
		\mathbf{V}_j(-T,x)=0&\text{ for }x\in \Omega,
	\end{cases} \right\}
\end{align*}
and
\begin{align}\label{E_j}
 \mathcal{E}_j(\Omega_T):=\left\{ \left.\mathbf{W}_j\right|_{\Omega_T}: \,  \begin{cases}
 	\mathcal{H}_j \mathbf{W}_j=\LC \mathcal{H}_j\RC^s u_j &\text{ in } \R^n_T \\
 	\mathbf{W}_j(-T,x)=0 &\text{ for }x\in \R^n
 \end{cases}  \right\},
\end{align}
where $u_j\in \mathbb{H}^s(\R^{n+1})$ is the solution of \eqref{nonlocal para sec 3}, for $j=1,2$.
Then we are able to show:

\begin{lem}\label{Lem: Density}
		$\mathcal{E}_j(\Omega_T)$ is dense in $\mathcal{D}_j(\Omega_T)$ with respect to $L^2(-T,T;H^1(\Omega))$, for $j=1,2$.
\end{lem}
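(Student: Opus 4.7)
The strategy is Hahn--Banach duality combined with the global unique continuation property for the adjoint nonlocal parabolic operator. First observe that $\mathcal{E}_j(\Omega_T) \subset \mathcal{D}_j(\Omega_T)$: any $\mathbf{W} \in \mathcal{E}_j$ satisfies $\mathcal{H}_j \mathbf{W} = (\mathcal{H}_j)^s u_j = 0$ in $\Omega_T$ by the interior nonlocal equation for $u_j$, with zero initial data on $\Omega$. Hence density reduces, via Hahn--Banach, to showing that any continuous linear functional $F \in (L^2(-T,T; H^1(\Omega)))^\ast$ annihilating $\mathcal{E}_j(\Omega_T)$ also annihilates $\mathcal{D}_j(\Omega_T)$.

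Given such $F$, I let $\phi \in L^2(-T,T; H^1_0(\Omega))$ be the unique weak solution of the backward adjoint parabolic problem $\mathcal{H}_j^\ast \phi = F$ in $\Omega_T$ with $\phi|_{\Sigma_T}=0$ and $\phi(T,\cdot)=0$ (well-posedness is parallel to Lemma~\ref{wellpose1}); its extension by zero to the exterior yields $\tilde\phi \in L^2(-T,T;H^1(\R^n))$. Setting $\eta := \sigma_j \partial_\nu \phi|_{\Sigma_T}$, a standard integration by parts (using the vanishing Dirichlet and final-time data of $\phi$, together with the zero initial data of elements of $\mathcal{D}_j$ and $\mathcal{E}_j$) gives
\[
\langle F, u \rangle = \int_{\Omega_T} \phi \, \mathcal{H}_j u \, dxdt - \int_{\Sigma_T} u \cdot \eta \, dSdt.
\]
For $\mathbf{V}\in \mathcal{D}_j$ one has $\mathcal{H}_j \mathbf{V}=0$, and for $\mathbf{W}\in \mathcal{E}_j$ one has $\mathcal{H}_j \mathbf{W}=(\mathcal{H}_j)^s u_j=0$ in $\Omega_T$; both cases reduce to $\langle F,\cdot\rangle=-\int_{\Sigma_T}(\cdot)\,\eta$. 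The hypothesis on $F$ therefore reads $\int_{\Sigma_T}\mathbf{W}\,\eta\,dSdt=0$ for every admissible exterior datum $f\in C_c^\infty((\Omega_e)_T)$. Because the Dirichlet traces $\mathbf{V}|_{\Sigma_T}$ of $\mathbf{V}\in\mathcal{D}_j$ span a dense subset of the relevant trace space (by well-posedness of the forward local parabolic problem), showing $\eta \equiv 0$ on $\Sigma_T$ completes the argument.

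To establish $\eta \equiv 0$, I dualize through the nonlocal operators: rewrite $\int_{\Sigma_T}\mathbf{W}\,\eta = \langle \mathbf{W}, \eta\delta_{\Sigma_T}\rangle_{\R^n_T}$, substitute $\mathbf{W}=(\mathcal{H}_j)^s \mathbf{V}$ and $\mathbf{V}=\mathcal{H}_j^{-1}u_j$ (with zero initial data), and apply the pairing identity \eqref{upper for pairing} to obtain $\langle u_j, \Phi\rangle_{\R^n_T}=0$, where $\Phi$ is a distribution extracted from $\eta\delta_{\Sigma_T}$ by a transposed nonlocal operator of order $s-1$. Decomposing $u_j=f$ on $(\Omega_e)_T$, $u_j|_{\Omega_T}$ in the interior, and $u_j\equiv 0$ for $t\leq -T$, this becomes
\[
\int_{(\Omega_e)_T} f\,\Phi\, dxdt + \int_{\Omega_T} u_j|_{\Omega_T}\cdot \Phi\, dxdt=0
\]
for every $f\in C_c^\infty((\Omega_e)_T)$. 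The Runge-type density property for the nonlocal parabolic equation (in the spirit of \cite{GU2021calder,LLR2019calder,BKS2022calderon}) combined with the arbitrariness of $f$ on exterior open sets will force $\Phi\equiv 0$ on a nonempty open subset of $(\Omega_e)_T$. Then the global unique continuation property for $(\mathcal{H}_j)^s_\ast$ recorded in Remark~\ref{Rmk: AdjUCP} propagates this vanishing and yields $\eta\delta_{\Sigma_T}=0$, hence $\eta\equiv 0$ on $\Sigma_T$.

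The principal obstacle is this final step: rigorously disentangling, in the displayed identity, the exterior contribution $\int f\,\Phi$ from the interior contribution $\int u_j|_{\Omega_T}\,\Phi$, which are coupled through the nonlocal Poisson-type map $f\mapsto u_j|_{\Omega_T}$. This requires a careful combination of the nonlocal Runge approximation with the global UCP for the adjoint operator, following the pattern developed in the fractional Calder\'on literature.
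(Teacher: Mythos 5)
Your overall architecture (Hahn--Banach plus an adjoint problem plus a nonlocal-density argument) is the right family of ideas, but the specific way you set up the adjoint problem creates an obstruction that you acknowledge you cannot close, and the paper's proof sidesteps precisely this difficulty by a different choice.

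The crux: you solve the adjoint problem $\mathcal{H}_j^\ast\phi=F$ \emph{locally} on $\Omega_T$, with $\phi|_{\Sigma_T}=0$ and $\phi(T,\cdot)=0$, and then extend by zero. Because the zero-extension does not remain a weak solution across $\Sigma$, you pick up the co-normal trace $\eta=\sigma_j\p_\nu\phi|_{\Sigma_T}$ as a singular source. Your hypothesis on $F$ then only tells you that $\int_{\Sigma_T}\mathbf{W}\,\eta=0$ for every $\mathbf{W}\in\mathcal{E}_j$, and you now need the boundary traces $\mathbf{W}|_{\Sigma_T}$ to be dense in a suitable trace space in order to conclude $\eta\equiv 0$. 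That trace-density is not available to you: the $\mathbf{W}$'s are globally defined solutions of $\mathcal{H}_j\mathbf{W}=(\mathcal{H}_j)^s u_j$ in $(\R^n)_T$, and their values on $\Sigma_T$ are constrained by the entire nonlocal Poisson map $f\mapsto u_j$. Your attempt to ``dualize through the nonlocal operators'' produces the identity $\int_{(\Omega_e)_T}f\,\Phi+\int_{\Omega_T}u_j|_{\Omega_T}\Phi=0$ in which the exterior term and the interior term are coupled through the solution map, and as you note yourself you have no rigorous way to disentangle them. This is a real gap, not a technicality. Moreover the statement ``Because the Dirichlet traces $\mathbf{V}|_{\Sigma_T}$ of $\mathbf{V}\in\mathcal{D}_j$ span a dense subset of the relevant trace space \ldots completes the argument'' has the quantifier in the wrong place: you use density of $\mathbf{W}$-traces to kill $\eta$, not density of $\mathbf{V}$-traces.

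What the paper does differently, and why it works: it solves the adjoint backward parabolic equation $(\mathcal{H}_j)_\ast\mathbf{v}_j=F$ \emph{globally} on $(\R^n)_T$ with $\mathbf{v}_j(T,\cdot)=0$, not on $\Omega$ with a boundary condition. There is then no $\eta$-term at all; the integration by parts gives directly $\langle\mathbf{v}_j,\mathcal{H}_j\mathbf{W}_j\rangle_{(\R^n)_T}=0$. Since $F$ is supported in $\overline{\Omega_T}$ and $\mathcal{H}_j\mathbf{W}_j=0$ in $\Omega_T$, this pairing localizes automatically to $(\Omega_e)_T$: $\langle\mathbf{v}_j,\mathcal{H}_j\mathbf{W}_j\rangle_{(\Omega_e)_T}=0$. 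Now Proposition~\ref{Prop: dense 1} says exactly that $\mathcal{H}_j\mathbf{W}_j|_{(\Omega_e)_T}$ is dense in $\mathbf{H}^{-s}((\Omega_e)_T)$ as $f$ ranges over $C_c^\infty((\Omega_e)_T)$, and that density statement is what is available in the nonlocal framework (it is a Runge-type result proved via the global UCP for the adjoint nonlocal operator, Remark~\ref{Rmk: AdjUCP}). This forces $\mathbf{v}_j\equiv 0$ on $(\Omega_e)_T$, hence both its Dirichlet and Neumann traces vanish on $\Sigma_T$, and the final integration by parts against $\mathbf{V}_j\in\mathcal{D}_j$ closes the argument cleanly. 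In short: you tried to test $F$ against traces on $\Sigma_T$, which is the hard (local) Cauchy-data side; the paper instead arranges to test against the nonlocal exterior data, precisely the side on which a clean density theorem is available.

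If you want to salvage your route, the fix is essentially to move the adjoint problem from $\Omega_T$ to $(\R^n)_T$ so that the support of $F$, rather than an artificial zero Dirichlet condition, produces the localization, and then invoke the exterior density and the backward-parabolic unique continuation to kill both Cauchy traces of the adjoint solution on $\Sigma_T$ simultaneously.
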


We first assume that Lemma \ref{Lem: Density} holds, then we can complete the proof of Theorem \ref{T1}. 

\begin{proof}[Proof of Theorem \ref{T1}--Part 2]
Given any $\mathbf{V}_j\in \mathcal{D}_j$ with $\mathbf{V}_1=\mathbf{V}_2=f$ on $\Sigma_T$ for arbitrary $f\in L^2(-T,T;H^{1/2}(\Sigma))$, then there must exist sequences $\left\{ \mathbf{W}_j^{(k)}\right\}_{k\in \N}$ solves \eqref{W_j equation} such that $\mathbf{W}_j^{(k)} \rightarrow \mathbf{V}_j$ in $L^2(-T,T;H^1(\Omega))$ as $k \to \infty$, for $j=1,2$. Similar as the Part 1 of the proof of Theorem \ref{T1}, $\mathbf{W}_j^{(k)}$ satisfies 
\begin{align}\label{Wk sequence}
	\begin{cases}
		\mathcal{H}_j \mathbf{W}^{(k)}_j=0 &\text{ in }\Omega_T, \\
		\left\{  \mathbf{W}^{(k)}_1 , \, \sigma_1\p_{\nu} \mathbf{W}^{(k)}_1 \right\}=\left\{ \mathbf{W}^{(k)}_2 , \, \sigma_2\p_{\nu} \mathbf{W}^{(k)}_2  \right\}&\text{ on }\Sigma_T,
	\end{cases}
\end{align}
for $k\in \N$ and $j=1,2$.
By taking the limit $k\to \infty$ of \eqref{Wk sequence}, we can have 
\begin{align}\label{limit U_j}
	\begin{cases}
		\mathcal{H}_j \mathbf{V}_j=0 &\text{ in }\Omega_T, \\
		\left\{  \mathbf{V}_1 , \, \sigma_1\p_{\nu} \mathbf{V}_1 \right\}=\left\{ \mathbf{V}_2 , \, \sigma_2\p_{\nu} \mathbf{V}_2  \right\}&\text{ on }\Sigma_T,
	\end{cases}
\end{align}
for $j=1,2$. Hence, we show that \eqref{same local Cauchy} holds. This shows Theorem \ref{T1} holds true.
\end{proof}

\begin{prop}\label{Prop: dense 1}
	Let $\Omega\subset \R^n$, $0<s<1$, and $u\in \mathbb{H}^s(\R^{n+1})$ satisfy 
	\begin{align}\label{sol u pr}
		\mathcal{H}^s u =0 \text{ in }\Omega_T.
	\end{align}
Then for any open set $\mathcal{O}\subset \R^n \setminus \overline{(\Omega \cup \mathcal{O}_1)}$, the set 
\begin{align*}
	\mathcal{X}((\Omega_e)_T):=\left\{ \left. \mathcal{H}^su \right|_{(\Omega_e)_T}: \,  u \text{ is a solution to \eqref{sol u pr}} \right\}
\end{align*}
is dense in $\mathbf{H}^{-s}((\Omega_e)_T)$.
\end{prop}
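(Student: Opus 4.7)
The plan is to prove this Runge-type density via a Hahn--Banach duality argument combined with the global unique continuation property for the adjoint operator $\mathcal{H}^s_\ast$ (Remark~\ref{Rmk: AdjUCP}). First, I would invoke the Hahn--Banach theorem to reduce the claim to the following dual statement: any $v$ in the predual $\widetilde{\mathbf{H}}^s((\Omega_e)_T)$ satisfying
\[
\langle \mathcal{H}^s u, v \rangle_{(\Omega_e)_T} = 0
\]
for every admissible solution $u \in \mathbb{H}^s(\R^{n+1})$ of $\mathcal{H}^s u = 0$ in $\Omega_T$ (with exterior data varying in $C^\infty_c((\mathcal{O}_1)_T)$) must vanish.

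Next, I would encode the annihilation condition as a homogeneous adjoint problem. Let $\varphi \in \mathbb{H}^s(\R^{n+1})$ be the unique solution of the backward-in-time exterior value problem
\[
\mathcal{H}^s_\ast \varphi = 0 \text{ in } \Omega_T, \qquad \varphi = v \text{ in } (\Omega_e)_T, \qquad \varphi(t,x) = 0 \text{ for } t \geq T,
\]
whose well-posedness follows from a time-reversed analogue of Proposition~\ref{Prop: wellposed}. Using the duality identity $\langle \mathcal{H}^s u, \varphi \rangle_{\R^{n+1}} = \langle u, \mathcal{H}^s_\ast \varphi \rangle_{\R^{n+1}}$, together with the vanishing of $\mathcal{H}^s u$ and $\mathcal{H}^s_\ast \varphi$ on $\Omega_T$ and the relation $u|_{(\Omega_e)_T} = f$, $\varphi|_{(\Omega_e)_T} = v$, the annihilation hypothesis transforms into
\[
\langle f, \mathcal{H}^s_\ast \varphi \rangle_{(\mathcal{O}_1)_T} = 0 \qquad \text{for every } f \in C^\infty_c((\mathcal{O}_1)_T),
\]
which forces $\mathcal{H}^s_\ast \varphi = 0$ on $(\mathcal{O}_1)_T$.

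The hypothesis $\mathcal{O} \subset \R^n \setminus \overline{\Omega \cup \mathcal{O}_1}$ is then exactly what is needed to ensure that the annihilator $v$ vanishes on $(\mathcal{O}_1)_T$ (after the standard reduction to test functions supported in the far exterior piece disjoint from $\overline{\mathcal{O}_1}$), and hence so does $\varphi$ there. With $\varphi = 0$ and $\mathcal{H}^s_\ast \varphi = 0$ simultaneously holding on the nonempty open set $(\mathcal{O}_1)_T$, the global unique continuation property for the adjoint operator (Remark~\ref{Rmk: AdjUCP}) yields $\varphi \equiv 0$ on $(\R^n)_T$, whence $v = \varphi|_{(\Omega_e)_T} = 0$.

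The main obstacle I expect is the careful bookkeeping that establishes the simultaneous vanishing of $\varphi$ and $\mathcal{H}^s_\ast \varphi$ on a common open subset of the exterior, since it is exactly this point that activates the global unique continuation step and that uses the disjointness hypothesis in an essential way. The remaining ingredients---the well-posedness of the backward adjoint exterior problem and the self-adjointness of the bilinear form---are otherwise standard, following from the Lax--Milgram framework behind Proposition~\ref{Prop: wellposed} via time reversal.
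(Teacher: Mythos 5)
Your proposal follows essentially the same route as the paper's own proof: Hahn--Banach reduction, introduction of the adjoint exterior value problem for $\mathcal{H}^s_\ast$, the pairing computation $\langle u, \mathcal{H}^s_\ast v\rangle_{(\Omega_e)_T}=\langle \mathcal{H}^s u, \varphi\rangle_{(\Omega_e)_T}=0$, and conclusion via the global unique continuation property of Remark~\ref{Rmk: AdjUCP}. In fact you are somewhat more careful than the paper at the decisive step where one needs the adjoint solution to vanish simultaneously with $\mathcal{H}^s_\ast$ of it on a common open exterior set---the paper's proof simply asserts ``$v=\mathcal{H}^s_\ast v=0$ in $(\Omega_e)_T$'' without explanation, whereas your invocation of the disjointness of $\mathcal{O}$ and $\mathcal{O}_1$ makes the bookkeeping explicit (and fits the evidently intended, though typographically garbled, two-set formulation of the proposition).
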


%Notice that from the equation \eqref{sol u pr}, the function $u|_{\LC \mathcal{O}_1\RC_T}$ can be arbitrarily defined on the set $\LC\mathcal{O}_1\RC_T$. 

\begin{proof}%[Proof of Proposition \ref{Prop: dense 1}]
	By the Hahn-Banach theorem, it suffices to show that given $\varphi \in \widetilde{\mathbf{H}}^s((\Omega_e)_T)$ such that 
	\begin{align}\label{pari =0}
	\left\langle  \mathcal{H}^s u, \varphi  \right\rangle_{(\Omega_e)_T}\equiv \left\langle  \mathcal{H}^s u, \varphi \right\rangle_{\mathbf{H}^{-s}((\Omega_e)_T)\times \widetilde{\mathbf{H}}^s((\Omega_e)_T)}=0,
	\end{align}
for any solutions $u$ of \eqref{sol u pr}, then we want to claim $\varphi \equiv0$.

Consider the adjoint problem and let $v\in \mathbb{H}^s(\R^{n+1})$ be the solution of 
\begin{align}\label{adjoint prob}
   \begin{cases}
   	\mathcal{H}^s_\ast v =0 &\text{ in }\Omega_T,\\
   	v=\varphi &\text{ in }(\Omega_e)_T.
   \end{cases}
\end{align}
Now, via equations \eqref{sol u pr}, \eqref{adjoint prob} and \eqref{pari =0}, one has that 
\begin{align}\label{pair =0}
 \begin{split}
 		\left\langle u,\mathcal{H}^s_\ast v \right\rangle_{(\Omega_e)_T}
 	=&\left\langle  u ,\mathcal{H}^s_\ast v \right\rangle_{\LC \R^n\RC_T}-\left\langle u,\mathcal{H}^s_\ast v \right\rangle_{\Omega_T}\\=
 	& \left\langle  \mathcal{H}^s u, v \right\rangle_{\LC \R^n\RC_T} \\
 	=& \left\langle  \mathcal{H}^s u, \varphi \right\rangle_{(\Omega_e)_T}=0.
 \end{split}
\end{align}
Thus, since $u|_{(\Omega_e)_T}$ can be arbitrary, by varying the value $u|_{(\Omega_e)_T}\in C^\infty_c\LC(\Omega_e)_T \RC$ and combining with \eqref{pair =0}, one can conclude that $\mathcal{H}_\ast^s v =0 $ in $(\Omega_e)_T$. 
%Recalling that $\mathcal{O}\subset \R^n \setminus \overline{(\Omega \cup \mathcal{O}_1)}$, and $v=0$ in $\LC \mathcal{O}_1\RC_T$ since $v=0$ in $\LC \R^n \setminus \overline{(\Omega \cup \mathcal{O})}\RC_T$. 
Hence, $v=\mathcal{H}^s_\ast v=0$ in $(\Omega_e)_T$, by applying Remark \ref{Rmk: AdjUCP}, one obtains $v\equiv 0$ in $\LC\R^n\RC_T$. By using the equation \eqref{adjoint prob}, we have $\varphi=v=0$ in $(\Omega_e)_T$ as desired. This proves the assertion.
\end{proof}

We are ready to show Lemma \ref{Lem: Density}.

\begin{proof}[Proof of Lemma \ref{Lem: Density}]
	 Consider $F\in \LC L^2(-T,T;H^{1}(\Omega))\RC^\ast$, which denotes the dual space of $L^2(-T,T;H^1(\Omega))$. 
	 %{\color{red} (The type-setting for this space is messed up but I do not want to change it everywhere)} 
	 Moreover, by using the definition of dual spaces via the natural dual pairing, it is not hard to see that 
	 \[
	  \LC L^2(-T,T;H^{1}(\Omega))\RC^\ast= L^2(-T,T;\wt H^{-1}(\Omega)),
	 \]
	 where 
	 \[
	 \wt H^{-1}(\Omega):=\left\{ h\in H^{-1}(\R^n):\,  \mathrm{supp}(h)\subset \overline{\Omega}\right\}
	 \]
	 denotes the dual space of $H^1(\Omega)$. In further, we also denote $H^{-1}(\R^n)$ as the dual space of $H^1(\R^n)$.
	 By the Hahn-Banach theorem, it is equivalent to show that 
	 \begin{align}\label{F W=0 1}
	    \left\langle  F, \mathbf{W}_j \right\rangle_{L^2(-T,T;\wt H^{-1}(\Omega)) \times L^2(-T,T;H^1(\Omega))}=0, \quad \text{ for all }\mathbf{W}_j \in \mathcal{E}_j,
	 \end{align}
     then it follows 
      \begin{align}\label{F W=0 2}
     	\left\langle  F,  \mathbf{V}_j \right\rangle_{L^2(-T,T;\wt H^{-1}(\Omega)) \times L^2(-T,T;H^1(\Omega))}=0\quad \text{ for all }\mathbf{V}_j \in \mathcal{D}_j,
     \end{align}
	for $j=1,2$.

%Let $\Omega, \mathcal{O}_1 \subset \R^n$ be the sets given as in Proposition \ref{Prop: dense 1}, and 
%$$
%\mathcal{O}:=\R^n \setminus \overline{(\Omega\cup \mathcal{O}_1)}.
%$$
For $0<s<1$, recalling that $\mathbf{W}_j$ is the solution of \eqref{W_j equation} for $j=1,2$. By varying the exterior data $f|_{(\Omega_e)_T}\in C^\infty_c \LC (\Omega_e)_T \RC$, Proposition \ref{Prop: dense 1} implies that the set 
\begin{align}\label{Y(O)}
	\mathcal{Y}((\Omega_e)_T):=\left\{ \left. \mathcal{H}_j \mathbf{W}_j \right|_{(\Omega_e)_T}: \,  \mathbf{W}_j \text{ is a solution of \eqref{W_j equation}}  \right\}
\end{align}
is dense in $L^2(-T,T;\wt H^{-1}(\mathcal{O}))$.
With the condition \eqref{W_regularity} at hand, one can directly see that 
\begin{align}\label{reg of HW}
\mathcal{H}_j \mathbf{W}_j\in \mathbb{H}^{-s}(\R^{n+1})=\mathbf{H}^{-s}(\R^{n+1}),
\end{align}
for $j=1,2$.
%where we have used the space identification \eqref{space identification} and its dual space in Section \ref{Sec 2}. {\color{red}This sentence makes no sense but I do not know what you are trying to say??}

Suppose that there exists a function $F \in L^2(-T,T;\wt H^{-1}(\Omega))$ satisfies \eqref{F W=0 1}, then we have 
 \begin{align}\label{F W=0 3}
 \begin{split}
 		0=&\left\langle  F, \mathbf{W}_j \right\rangle_{L^2(-T,T;\wt H^{-1}(\Omega)) \times L^2(-T,T;H^1(\Omega))} \\
 		=& \left\langle  F,{\mathbf{W}}_j\right\rangle _{L^2(-T,T;H^{-1}(\R^n)) \times L^2(-T,T;H^1(\R^n))},
 \end{split}
\end{align}
where we have utilized that $F\in L^2(-T,T;\wt H^{-1}(\Omega))$ with $\mathrm{supp}(F)\subset \overline{\Omega_T}$.
In addition, there must exists a unique solution $\mathbf{v}_j\in L^2(-T,T;H^1(\R^n))$ of the backward parabolic equation 
\begin{align}\label{backward para}
	\begin{cases}
		\LC \mathcal{H}_j \RC_\ast \mathbf{v}_j =F &\text{ in }\R^n \times (-T,T), \\
		\mathbf{v}_j(x,T)=0 &\text{ in }\R^n,
	\end{cases}
\end{align}
where $\LC \mathcal{H}_j\RC_\ast=-\p_t +\mathcal{L}_j$ denotes the backward parabolic operator, for $j=1,2$.
We next analyze the regularity of the solution $\mathbf{v}_j$.

Notice that $F\in L^2(-T,T;H^{-1}(\R^n))$, then we can apply the negative fractional Laplacian $(\mathbf{Id}-\Delta)^{-1/2}=(\mathbf{Id}-\Delta_x)^{-1/2}$ to regularize the source term 
$$
\wt F:=(\mathbf{Id}-\Delta)^{-1/2}F,
$$ 
such that $\wt F\in L^2(\R^{n+1})$. One can check that $(\mathbf{Id}-\Delta)^{-1/2}$ and $\mathcal{H}_j$ are interchangeable, and apply the result as in Lemma \ref{Lem: regularity} and Remark \ref{Rmk: regularity}, then we can obtain $\widetilde{\mathbf{v}}_j\in \mathbb{H}^{1,2}(\R^{n+1})$, where $\widetilde{\mathbf{v}}_j:=(\mathbf{Id}-\Delta)^{-1/2}\mathbf{v}_j$ so that 
\begin{align}\label{reg of v_j}
	\mathbf{v}_j=(\mathbf{Id}-\Delta)^{1/2}\widetilde{\mathbf{v}}_j\in \mathbb{H}^{1,1}(\R^{n+1})\subset \mathbf{H}^s(\R^{n+1}).
\end{align}

Next, via \eqref{F W=0 3} and \eqref{backward para}, an integration by parts yields that 
\begin{align}\label{F W=0 4}
	\begin{split}
		 0=&\left\langle  F,{\mathbf{W}}_j\right\rangle _{L^2(-T,T;H^{-1}(\R^n)) \times L^2(-T,T;H^1(\R^n))}\\
		=& \int_{-T}^T \int_{\R^n} \LC \mathcal{H}_j \RC_\ast \mathbf{v}_j \cdot {\mathbf{W}}_j \, dx dt \\
		=& \int_{-T}^T \int_{\R^n} \mathbf{v}_j \LC \mathcal{H}_j {\mathbf{W}}_j\RC dxdt\\
		=& \left\langle  \mathbf{v}_j, \mathcal{H}_j {\mathbf{W}}_j\right\rangle _{\widetilde{\mathbf{H}}^{s}((\R^n)_T)\times \mathbf{H}^{-s}((\R^n)_T)},
	\end{split}
\end{align}
for $j=1,2$, where we have utilized \eqref{reg of HW} and \eqref{reg of v_j}. Via  \eqref{E_j}, one knows that 
\[
\mathcal{H}_j \mathbf{W}_j=0 \text{ in }\Omega_T,
\]
for $j=1,2$.
Combining with the preceding equality, the identity \eqref{F W=0 4} implies 
\begin{align}\label{pairing W=0}
	\left\langle  \mathbf{v}_j, \mathcal{H}_j\mathbf{W}_j\right\rangle _{\widetilde{\mathbf{H}}^s((\Omega_e)_T)\times \mathbf{H}^{-s}((\Omega_e)_T)}=0
\end{align}
for any $\mathbf{W}_j\in \mathbb{H}^{2-s}(\R^{n+1})$ solving \eqref{W_j equation}. Moreover, by utilizing the fact that $\mathcal{Y}((\Omega_e)_T)$ is also dense in $\mathbf{H}^{-s}((\Omega_e)_T)$, where $\mathcal{Y}((\Omega_e)_T)$ is defined by \eqref{Y(O)}. Thus, \eqref{pairing W=0} implies that $\mathbf{v}_j=0$ in $(\Omega_e)_T$, for $j=1,2$.

On the other hand, recalling that $\mathbf{v}_j$ is a solution of \eqref{backward para}, in particular, $\mathbf{v}_j$ satisfies $\LC \mathcal{H}_j\RC_\ast \mathbf{v}_j=0$ in $\LC \Omega_e\RC_T$. Combining with $\mathbf{v}_j=0$ in $\mathcal{O}_T$, the unique continuation property for (backward) parabolic equations yields that $\mathbf{v}_j=0$ in $\LC \Omega_e\RC_T$. To summarize, the function $\mathbf{v}_j\in L^2(-T,T;H^1_0(\Omega))$ solves 
\begin{align*}%\label{backward para}
	\begin{cases}
		\LC \mathcal{H}_j \RC_\ast \mathbf{v}_j =F &\text{ in }\Omega_T, \\
		\mathbf{v}_j=0 &\text{ on }\Sigma_T,\\
		\mathbf{v}_j(x,T)=0 &\text{ in }\Omega,
	\end{cases}
\end{align*}
and from the well-posedness for the regularity condition \eqref{reg of v_j} of $\mathbf{v}_j$, one has that $\mathbf{v}_j \in L^2(-T,T;H^1_0(\Omega))$, such that $\sigma_j\p_\nu \mathbf{v}_j\in L^2(-T,T;H^{1/2}(\Sigma))$ is well-defined for $j=1,2$. Now, since $\mathbf{v}_j=0$ in $\LC\Omega_e\RC_T$, one must have that $\sigma_j\p_\nu \mathbf{v}_j=0$ on $\Sigma_T$ for $j=1,2$. Hence, an integration by parts infers that 
\begin{align*}
	&\left\langle F,\mathbf{V}_j\right\rangle _{L^2(-T,T;\wt H^{-1}(\Omega)) \times L^2(-T,T;H^1(\Omega))}\\
	=&\int_{-T}^T \int_\Omega \LC \mathcal{H}_j \RC_\ast \mathbf{v}_j\cdot \mathbf{V}_j \, dxdt \\
	=&\int_{-T}^T \int_\Omega \mathbf{v}_j \cdot \mathcal{H}_j \mathbf{V}_j \, dxdt=0,
\end{align*}
where we used that $\mathbf{v}_j(T,x)=\mathbf{V}_j(-T,x)=0$ in $\Omega$, and $\mathcal{H}_j\mathbf{V}_j=0$ in $\Omega_T$, which proves \eqref{F W=0 2}. This completes the proof.
\end{proof}

\section{Global uniqueness and non-uniqueness}\label{Sec 5}
In the previous section, we have shown that the inverse problem for nonlocal parabolic equations and be reduced to its local counterparts. We first prove Corollary \ref{Cor: uniqueness}.

\begin{proof}[Proof of Corollary \ref{Cor: uniqueness}]
	With Theorem \ref{T1} at hand, it is known that the information of the nonlocal Cauchy data can be reduced to its local counterpart. Hence, one has that 
	$$
	\left\{v_1 |_{\Sigma_T}, \left. \sigma_1 \p_\nu v_1 \right|_{\Sigma_T} \right\} = \left\{v_2 |_{\Sigma_T}, \left. \sigma_2 \p_\nu v_2 \right|_{\Sigma_T} \right\} ,
	$$
	where $v_j\in L^2 (0,T;H^1(\Omega))$ is the weak solution of 
	\begin{align*}
		\begin{cases}
			\mathcal{H}_jv_j=0 &\text{ in }\Omega_T,\\
			v_j=f &\text{ on }\Sigma_T, \\
			v_j(-T,x)=0 &\text{ for }x\in \Omega,
		\end{cases}
	\end{align*}
for $j=1,2$. Moreover, one can apply the completeness of products of solutions to parabolic equations (for example, see \cite[Theorem 1.3]{canuto2001determining}), then we are able to conclude that $\sigma_1=\sigma_2$ in $\Omega$ as desired.
\end{proof}

Before proving Theorem \ref{T2}, let us analyze the following changing of variables, which can be regarded as the \emph{transformation optics} in the literature. Given $u\in \mathbb{H}^s(\R^{n+1})$, let $\mathbf{U}(t,\tau ,x)$ be a solution of 
\begin{align}\label{U-equation}
	\begin{cases}
		\LC \p_t +\p_\tau \RC \mathbf{U} -\nabla \cdot (\sigma \nabla  \mathbf{U})=0 & \text{ in }(-T,T)\times (0,\infty)\times \R^n,  \\
		\mathbf{U}(t,0,x)=u(t,x) &\text{ for }(t,x)\in (-T,T)\times \R^n,
	\end{cases}
\end{align}
where $\sigma$ is a globally Lipschitz continuous matrix-valued function satisfying \eqref{ellipticity condition}.

Let $\mathbf{F}:\R^n\to \R^n$ be a locally Lipschitz invertible map such that the Jacobians satisfy 
\begin{align}\label{positive Jacobian}
	\det (D\mathbf{F})(x) , \quad \det(D\mathbf{F}^{-1})(x) \geq C>0 \text{ for a.e. }x\in \R^n,
\end{align}
for some positive constant $C$.
By  direct computations, one can derive the following proposition known as the \emph{transformation optics} via the standard change of variables technique (for example, see \cite{KSVW2008cloaking}).

\begin{prop}\label{Prop: trans opt}
	$\mathbf{U}(t,\tau ,x)$ is a solution of \eqref{U-equation} if and only if $\widetilde{\mathbf{U}}(t,\tau, y)=\mathbf{U}(t,\tau,\mathbf{F}^{-1}(y))$ is a solution of 
	\begin{align}\label{U tilde-equation}
		\begin{cases}
			\mathbf{F}_\ast 1 (y)\LC \p_t +\p_\tau \RC \widetilde{\mathbf{U}} -\nabla \cdot (\mathbf{F}_\ast\sigma(y) \nabla \widetilde{\mathbf{U}})=0 & \text{ in }(-T,T)\times (0,\infty)\times \R^n,  \\
			\widetilde{\mathbf{U}} (t,0,y)=\wt u(t,y) &\text{ for }(t,y)\in (-T,T)\times \R^n,
		\end{cases}
	\end{align}
where $\wt u=u(t, \mathbf{F}^{-1}(y))$. Here the coefficients are defined by 
\begin{align*}
   \begin{cases}
   	 \mathbf{F}_\ast 1 (y) =\left. \frac{1}{\det (D\mathbf{F})(x)}\right|_{x=\mathbf{F}^{-1}(y)}, \\
   	 \mathbf{F}_\ast\sigma(y) = \left. \frac{D\mathbf{F}^T (x) \sigma (x)D\mathbf{F}(x)}{\det (D\mathbf{F})(x)}\right|_{x=\mathbf{F}^{-1}(y)} .
   \end{cases}
\end{align*}
\end{prop}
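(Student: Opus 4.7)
The plan is to verify the claimed equivalence by a direct change of variables in the weak formulation of \eqref{U-equation}. Since $\mathbf{F}$ depends only on the spatial variable $x$, the temporal derivatives $\p_t$ and $\p_\tau$ commute trivially with the pullback, so the only genuine computation concerns the spatial divergence-form part and the volume element. The condition \eqref{positive Jacobian}, together with the Lipschitz invertibility of $\mathbf{F}$, is exactly what is needed to legitimize the change of variables pointwise a.e.\ and to preserve weak formulations.

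First I would rewrite \eqref{U-equation} weakly: for any $\varphi\in C^\infty_c((-T,T)\times(0,\infty)\times\R^n)$,
$$
\int \LB \LC \p_t + \p_\tau \RC \mathbf{U}\, \varphi + \sigma(x)\nabla_x \mathbf{U}\cdot\nabla_x\varphi\RB dt\, d\tau\, dx = 0.
$$
Then I would substitute $\widetilde{\mathbf{U}}(t,\tau,y):=\mathbf{U}(t,\tau,\mathbf{F}^{-1}(y))$ and the analogous $\wt\varphi(t,\tau,y):=\varphi(t,\tau,\mathbf{F}^{-1}(y))$. The spatial chain rule relates $\nabla_x \mathbf{U}$ at $x=\mathbf{F}^{-1}(y)$ to $\nabla_y \widetilde{\mathbf{U}}$ through a factor of $D\mathbf{F}$; plugging this into the symmetric quadratic form $\sigma\nabla_x\mathbf{U}\cdot\nabla_x\varphi$ and using $\sigma=\sigma^T$ produces an integrand proportional to $\LC D\mathbf{F}^T \sigma\, D\mathbf{F}\RC \nabla_y\widetilde{\mathbf{U}}\cdot\nabla_y\wt\varphi$ evaluated at $x=\mathbf{F}^{-1}(y)$. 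Simultaneously, the change of measure gives $dx=\LV\det D\mathbf{F}^{-1}(y)\RV dy=\mathbf{F}_\ast 1(y)\,dy$, which after absorbing the $1/\det D\mathbf{F}$ factor into the spatial tensor yields exactly $\mathbf{F}_\ast\sigma(y)\,\nabla_y\widetilde{\mathbf{U}}\cdot\nabla_y\wt\varphi$, while producing the factor $\mathbf{F}_\ast 1(y)$ in front of the time-derivative term.

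Collecting both contributions gives
$$
\int \LB \mathbf{F}_\ast 1(y)\LC\p_t+\p_\tau\RC\widetilde{\mathbf{U}}\,\wt\varphi + \mathbf{F}_\ast\sigma(y)\,\nabla_y\widetilde{\mathbf{U}}\cdot\nabla_y\wt\varphi\RB dt\, d\tau\, dy = 0,
$$
which is precisely the weak form of the first equation in \eqref{U tilde-equation}. Because $\varphi\mapsto\wt\varphi$ is a bijection of $C^\infty_c$ under the bi-Lipschitz invertibility of $\mathbf{F}$, the two weak formulations are equivalent, proving both directions of the ``if and only if''. The initial trace at $\tau=0$ transfers immediately through the pullback: $\widetilde{\mathbf{U}}(t,0,y)=\mathbf{U}(t,0,\mathbf{F}^{-1}(y))=u(t,\mathbf{F}^{-1}(y))=\wt u(t,y)$.

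I do not anticipate any genuine obstacle. The entire argument is essentially bookkeeping once one commits to the weak formulation; the only items requiring care are the symmetric-quadratic-form manipulation needed to isolate the correct placement of $D\mathbf{F}^T \sigma\, D\mathbf{F}$, and verifying that Lipschitz regularity of $\mathbf{F}$ together with \eqref{positive Jacobian} is enough to legitimize the change of variables, which is standard for bi-Lipschitz homeomorphisms with strictly positive Jacobian.
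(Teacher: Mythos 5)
Your proposal is correct and follows essentially the same route as the paper: you pass to the weak formulation, change variables $y=\mathbf{F}(x)$, use the chain rule on the symmetric quadratic form to produce $D\mathbf{F}^T\sigma D\mathbf{F}$, and absorb the Jacobian factor to obtain $\mathbf{F}_\ast\sigma$ and $\mathbf{F}_\ast 1$. The paper's proof is terser but identical in substance; your added remarks on the bijectivity of $\varphi\mapsto\wt\varphi$ and the initial trace at $\tau=0$ are correct and fill in details the paper leaves implicit.
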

	
\begin{proof}
		The result can be seen via the standard change of variables. More precisely, by expressing \eqref{U-equation} in terms of the weak formulation, one has that 
		\begin{align}\label{weak of transfo}
			\int_{\R^n} \LC \p_t +\p_\tau \RC \mathbf{U} \varphi \, dx+ \int_{\R^n} \sigma \nabla_x \mathbf{U}\cdot \nabla_x \varphi \, dx=0,
		\end{align}
	for any $\varphi =\varphi(x) \in H^1(\R^n)$. Via the change of variable $y=\mathbf{F}(x)$ (independent of $(t,\tau)$-variables), it is not hard to see that 
	\begin{align*}
		\int_{\R^n} \sum \sigma_{ij} \frac{\p \mathbf{U}}{\p x_i}\frac{\p \varphi}{\p x_j}\, dx = \int_{\R^n} \sum \sigma_{ij} \frac{\p \mathbf{U}}{\p y_k}\frac{\p y_k}{\p x_i} \frac{\p \varphi}{\p y_\ell} \frac{\p y_\ell}{\p x_j} \det \LC \frac{\p x}{\p y}\RC dy,
	\end{align*}
where $\det \LC \frac{\p x}{\p y}\RC$ denotes the Jacobian of the change of variable $x=\mathbf{F}^{-1}(y)$.
    Inserting the above identity into \eqref{weak of transfo}, the assertion is proven.
\end{proof}

Finally, let us prove Theorem \ref{T2}.

\begin{proof}[Proof of Theorem \ref{T2}]
Let $\Omega\subset \R^n$ be a bounded domain and $W\Subset \Omega_e$ be a nonempty open set. Let $\mathbf{F}:\R^n \to \R^n$ be the Lipschitz invertible map described as before, which satisfy $\mathbf{F}:\overline{\Omega}\to \overline{\Omega}$ and \eqref{positive Jacobian}. We also assume that $\mathbf{F}(x)=x$ in $W$. Let $u\in \mathbb{H}^s(\R^{n+1})$ be a solution of $\LC \mathcal{H}_\sigma \RC ^s u=0$ in $\Omega_T$ with $u(-T, x)=0$ for $x\in \R^n$, where the nonlocal parabolic operator $\LC\mathcal{H}_\sigma \RC ^s$ can be defined by 
\begin{align*}
	\LC \mathcal{H}_\sigma\RC^s u(t,x):=-\frac{s}{\Gamma(1-s)} \int_0 ^\infty  \frac{\mathbf{U}(t,x,\tau)-u(t,x)}{\tau ^{1+s}}\, d\tau .
\end{align*}
Here $\mathcal{H}_\sigma:=\p_t -\nabla \cdot (\sigma \nabla )$ and $\mathbf{U}$ satisfies \eqref{U-equation}. Thus, adopting all notations in this section, 
\begin{align}\label{equation u}
		\LC\mathcal{H}_\sigma\RC^s u=0  \text{ in }\Omega_T \quad \text{ and}	\quad u(-T,x)=0 &\text{ in }\R^n
\end{align} 
imply that 
\begin{align*}
0= &-\frac{s}{\Gamma(1-s)}\int_0^\infty \frac{\mathbf{U}(t,\tau ,x)-u(t,x)}{\tau^{1+s}}\, d\tau  \\
=& -\frac{s}{\Gamma(1-s)}\int_0^\infty \frac{\widetilde{\mathbf{U}}(t,\tau ,y)-\wt u(t,y)}{\tau^{1+s}}\, d\tau , \quad \text{ for }(t,x), \ (t,y)\in \Omega_T
\end{align*}
where $\widetilde{\mathbf{U}}(t,\tau ,y)$ is a solution to \eqref{U tilde-equation}. Meanwhile, $\wt u(-T,y)=0$, which yields that $\wt u\in \mathbb{H}^s(\R^{n+1})$ is a solution to 
\begin{align}\label{equation tilde u}
	\LC\mathcal{H}_{\mathbf{F}_\ast \sigma}\RC^s \wt u=0  \text{ in }\Omega_T \quad \text{ and}	\quad \wt u(-T,y)=0 &\text{ in }\R^n.
\end{align}

On the other hand, in viewing of the nonlocal Cauchy data, we can derive that $u(t,\cdot )=\wt u(t,\cdot)$ in $W_T$ and $\mathbf{U}(t,\tau,\cdot )=\widetilde{\mathbf{U}}(t,\tau ,\cdot)$ in $W$, for $(t,\tau)\in (-T,T)\times (0,\infty)$, then 
\begin{align*}
	\LC\mathcal{H}_\sigma\RC^s u(t,x)=&-\frac{s}{\Gamma(1-s)}\int_0^\infty \frac{\mathbf{U}(t,\tau ,x)-u(t,x)}{\tau^{1+s}}\, d\tau \\
	=& -\frac{s}{\Gamma(1-s)}\int_0^\infty \frac{\widetilde{\mathbf{U}}(t,\tau ,y)-\wt u(t,y)}{\tau^{1+s}}\, d\tau \\
	=&	\LC\mathcal{H}_{\mathbf{F}_\ast \sigma}\RC^s \wt u \quad  \text{ in } W_T.
\end{align*}
The preceding derivation yields that there are two different matrix-valued functions $\sigma$ and $\mathbf{F}_\ast \sigma$ can generate the same exterior Cauchy data 
\begin{align*}
	\left\{ u|_{W_T}, \left. \LC\mathcal{H}_\sigma \RC^s u \right|_{W_T} \right\}=	\left\{ \wt u|_{W_T}, \left.\LC\mathcal{H}_{\mathbf{F}_\ast\sigma} \RC^s \wt u \right|_{W_T} \right\},
\end{align*}
where $u$ and $\wt u $ are solutions to \eqref{equation u} and \eqref{equation tilde u}, respectively. This completes the proof.
\end{proof}

\vskip0.5cm

\noindent\textbf{Acknowledgments.} 
Y.-H. Lin is partly supported by MOST 111-2628-M-A49-002. G. Uhlmann was partially
supported by NSF, a Walker Professorship at University of Washington and a Si-Yuan
Professorship at Institute for Advanced Study, Hong Kong University of Science and Technology.

\bibliographystyle{alpha}
\bibliography{ref}

\end{document}